\definecolor{azure}{rgb}{0.0, 0.5, 1.0}
\definecolor{awesome}{rgb}{1.0, 0.13, 0.32}
\tikzset{
commutative diagrams/.cd,
arrow style=tikz,
diagrams={>=latex}} 
\setlist[itemize]{noitemsep, nolistsep}
\setlist[enumerate]{noitemsep, nolistsep}
\newtheorem{thm}{Theorem}[section]
\newtheorem{prop}[thm]{Proposition}
\newtheorem{lm}[thm]{Lemma}
\newtheorem{cor}[thm]{Corollary}
\newtheorem*{thprop}{Proposition}
\newtheorem{thmx}{Theorem}
\newtheorem{corx}[thmx]{Corollary}
\theoremstyle{definition}
\newtheorem{defn}[thm]{Definition}
\newtheorem{ex}[thm]{Example}
\theoremstyle{remark}
\newtheorem{rke}[thm]{Remark}
\newcommand\scalemath[2]{\scalebox{#1}{\mbox{\ensuremath{\displaystyle #2}}}}
\newcommand{\p}{\mathfrak{p}}
\newcommand{\q}{\mathfrak{q}}
\newcommand{\m}{\mathfrak{m}}
\newcommand{\ie}{\emph{i.e.,} }
\newcommand{\K}{\mathbbm{k}}
\newcommand{\C}{\mathbb{C}}
\newcommand{\Z}{\mathbb{Z}}
\newcommand{\N}{\mathbb{N}}
\newcommand{\Pj}{\mathbb{P}}
\newcommand{\Or}{\mathcal{O}}
\newcommand{\Ka}{\mathcal{K}}
\newcommand{\Hom}{\mathrm{Hom}}
\newcommand{\Ext}{\mathrm{Ext}}
\newcommand{\proj}{\mathrm{Proj}}
\newcommand{\Jac}{\mathrm{Jac}}
\newcommand{\cid}{\mathrm{cid}}
\newcommand{\Id}{\mathcal{I}}
\newcommand{\Fitt}{\mathrm{Fitt}}
\newcommand{\Spec}{\mathrm{Spec}}
\newcommand{\A}{\mathbb{A}}
\newcommand{\ord}{\mathrm{ord}}
\title{The complete intersection discrepancy of a curve I: Numerical Invariants}
\author{Andrei Bengu\textcommabelow s-Lasnier and Antoni Rangachev
\vspace{.8cm}\\
W\MakeLowercase{ith an appendix by} Marc Chardin}
\begin{document}

\pagestyle{fancy}
\fancyhf{}
\fancyhead[CE]{\textsc{Andrei Bengu\textcommabelow s-Lasnier and Antoni Rangachev}}
\fancyhead[CO]{\textsc{The complete intersection discrepancy of a curve}}
\fancyhead[LE,RO]{\thepage}


\address{Andrei Bengu\textcommabelow s-Lasnier\\
Institute of Mathematics and Informatics\\
Bulgarian Academy of Sciences\\
Akad. G. Bonchev, Sofia 1113, Bulgaria \\
email:abengus@math.bas.bg}

\address{Antoni Rangachev\\
Institut de Math\'ematiques de Jussieu-Paris Rive Gauche \\
Centre national de la recherche scientifique (CNRS) \\
Paris, France\\
Institute of Mathematics and Informatics\\
Bulgarian Academy of Sciences\\
email: rangachev@imj-prg.fr}

\begin{abstract}
We generalize two classical formulas for complete intersection curves by introducing the {\it the complete intersection discrepancy} of a curve as a correction term. The first is a well-known multiplicity formula in singularity theory, due to Lê, Greuel and Teissier, which relates some of the basic invariants of a curve singularity. We apply this generalization elsewhere to the study of equisingularity of curves. The second is the genus--degree formula for projective curves. The main technical tool used to obtain these generalizations is an adjunction-type identity derived from Grothendieck duality theory.

\end{abstract}

\subjclass[2010]{14B07, 14C17, 14H20, 14H50, 14M06, 13C40, 13H15}
\keywords{Complete intersection
discrepancy, linkage,  Milnor number, delta invariant, Jacobian ideal, Hilbert--Samuel multiplicity, ramification invariants, Nash blowup, arithmetic genus, aci curves, dualizing modules, Grothendieck duality, transversality, Bertini theorems.}

\maketitle

\tableofcontents

\section{Introduction}\label{intro}

Throughout this paper $\K$ is an algebraically closed field of arbitrary characteristic unless specified otherwise. All schemes considered
are equidimensional and of finite type over $\K$. A curve is a scheme of dimension one. 
Let $X$ be a Cohen-Macaulay curve and let
$Z$ be a complete intersection curve in some ambient smooth variety. Assume there exists a closed immersion $i:X\hookrightarrow Z$ such that $X=Z$ at the generic point of each irreducible component of $X$. When $X$ is an affine or a projective curve, we show below how to construct $Z$ from the equations of $X$. Set
$W:=\overline{Z\setminus X}$. In the language of linkage, we say that $X$ and $W$ are {\it geometrically linked} by the complete intersection $Z$.



We want to compute invariants of $X$ through those of $Z$ by quantifying the "difference" between the two curves as follows.

\begin{defn}
Let $x$ be a closed point in $X$. Denote by $\Id_X$ and $\Id_W$ the ideal sheaves of $X$ and $W$ in
$\Or_Z$, respectively. Define the \textit{complete intersection discrepancy} of $X$ with respect to $Z$ at $x$ as
the intersection number $I_x(X,W):=\dim_{\K}\Or_{X,x}/(\Id_{X}+\Id_{W}).$ The \textit{complete intersection discrepancy} of $X$ with respect to $Z$ is
\[I(X,W):=\sum_{x\in X\cap W}I_x(X,W). \] 
\end{defn}

When $X$ is projective, Hartshorne's connectedness theorem implies that $X$ and $W$ intersect provided that $W$ is nonempty. So $Z=X$ if and only if $I(X,W)=0$. Computing $I(X,W)$ from the definition above would require finding equations for $W$ which can be hard to do. We will show how to compute $I(X,W)$ directly from the equations of $Z$ and $X$  when $X$ has locally smoothable singularities using a result from \cite{BGR} which shows that $I_x(X,W)$ is constant in flat families. 

Assume $X$ is a reduced curve. Let $x \in X$. Consider the germ $(X,x) \subset \mathbb{A}_{\K}^n$. As is conventional when dealing with germs, we will consider an affine representative of it. As before, let $(Z,x)$ be a complete intersection curve that contains $(X,x)$ and that is equal to $(X,x)$ at the generic point of each irreducible component of $(X,x)$. Using linear algebra and prime avoidance we show that such $Z$ can be defined by $n-1$ general $\K$-linear combinations of a set of equations for  $(X,x)$. When $Z$ is general, we will show  below that the complete intersection discrepancy $I_x(X,W)$ is an intrinsic invariant of $X$ which we denote by $\cid(X,x)$. A topological interpretation of $\cid(X,x)$ when $(X,x)$ is smoothable is provided in \cite{PR}.

Denote by $\Jac(X,x)$ and $\Jac(Z,x)$ the Jacobian ideals of $(X,x)$ and $(Z,x)$, respectively. Identify $\Jac(Z,x)$ with its image in $\Or_{X,x}$. As the two ideals are primary to the maximal ideal of $\Or_{X,x}$ they have well-defined Hilbert--Samuel multiplicities that we will denote by $e(\Jac(X,x))$ and $e(\Jac(Z,x))$. Denote by $m_x$ the multiplicity of $X$ at $x$.

Let $\nu:\overline{(X,x)}\to (X,x)$ be the normalization morphism. The number $r_x:=|\nu^{-1}(x)|$ is the \textit{number of branches at} $x$. Define the \textit{delta invariant} of $X$ at $x$ as $\delta_{x}:=
\dim_\K\nu_*\Or_{\overline{X,x}}/\Or_{X,x}$. Denote by $\mu_x$ the Milnor number of $(X,x)$ as defined by Buchweitz and Greuel
in \cite{BG}. By \cite[Proposition 1.2.1]{BG} $\mu_x=2\delta_x-r_x+1$. Finally, define the \textit{ramification ideal} $R_X:=\Fitt_0
(\Omega^1_{\overline{X}/X})$ of $\overline{X}$ over $X$ as the $0$-th Fitting ideal of the module of relative Kähler differentials. It is an ideal in $\Or_{\overline{X}}$.
Denote by
$e(R_X):=e(\nu_*R_X)$ the Hilbert--Samuel multiplicity of $\nu_*R_X$ in $\mathcal{O}_{X,x}$. The following relation among the invariants introduced above (see Theorem \ref{thm:rho_to_multiplicity}) holds.

\begin{thmx}\label{thm:Milnor}
We have
$$e(\Jac(Z,x))-I_x(X,W)=2\delta_x+e(R_X).$$
When $Z$ is general, we have $e(\Jac(X,x))=e(\Jac(Z,x))$ and $I_x(X,W)=\cid(X,x)$. In addition, if $x$ is a tame point (e.g.\ $\mathrm{char}(\K)=0)$,  then $e(R_X)=m_x-r_x$ and thus
$$e(\Jac(X,x))-\cid(X,x)=\mu_x+m_x-1.$$
\end{thmx}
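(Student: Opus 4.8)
The plan is to prove the three assertions in order and then combine them, since the final displayed formula is a purely formal consequence of the first identity once the genericity and tameness statements are in hand. The conceptual heart is the identity $e(\Jac(Z,x))-I_x(X,W)=2\delta_x+e(R_X)$, which I would obtain from Grothendieck duality, applied both to the linkage of $X$ and $W$ by the Gorenstein curve $Z$ and to the normalization $\nu\colon\overline{X}\to X$. Since $Z$ is a complete intersection it is Gorenstein, so $\omega_Z$ is invertible and locally trivial, and linkage yields the isomorphism of $\Or_Z$-modules $\omega_X\cong\Hom_{\Or_Z}(\Or_X,\omega_Z)\cong\Id_W/\Id_Z$, which records the residual curve $W$ on the dualizing side. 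Duality for the finite map $\nu$ identifies $\nu_*\omega_{\overline{X}}$ with $\Hom_{\Or_X}(\nu_*\Or_{\overline{X}},\omega_X)$; dualizing the length-$\delta_x$ quotient $\nu_*\Or_{\overline{X}}/\Or_X$ shows that $\nu_*\omega_{\overline{X}}$ sits in $\omega_X$ with colength $\delta_x$, so that the two copies of $\delta_x$ (from $\Or_X\subseteq\nu_*\Or_{\overline{X}}$ and from $\nu_*\omega_{\overline{X}}\subseteq\omega_X$) account for the factor $2\delta_x$. The Jacobian ideal of the complete intersection $Z$ governs the different of $\nu$, so that comparing the K\"ahler different $R_X$ with the differential data carried by $\omega_Z$, and keeping track of the part supported on $W$, produces the correction $I_x(X,W)$. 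Translating this chain of isomorphisms into colengths and then into Hilbert--Samuel multiplicities gives the identity. I expect this step, recorded as Theorem~\ref{thm:rho_to_multiplicity}, to be the main obstacle, precisely because one must transfer ramification data from $Z$ to $X$ and verify that the entire defect equals the intersection number $I_x(X,W)$; granting it, the remaining steps are comparatively soft.

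For the genericity statement, write $Z=V(g_1,\dots,g_{n-1})$ with the $g_i$ general $\K$-linear combinations of a generating set $f_1,\dots,f_s$ of $\Id_X$, so that the Jacobian matrix of the $g_i$ equals $A\cdot J$, where $A$ is the constant coefficient matrix and $J$ is the Jacobian matrix of the $f_i$. By Cauchy--Binet every maximal minor of $A\cdot J$ is a $\K$-linear combination of maximal minors of $J$, hence $\Jac(Z,x)\,\Or_{X,x}\subseteq\Jac(X,x)$ for every choice of $A$; for a general choice the two ideals have the same integral closure in $\Or_{X,x}$, by the standard fact that general linear combinations preserve the integral closure of an ideal of maximal minors. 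Since both ideals are primary to $\m_x$ and $\Or_{X,x}$ is Cohen--Macaulay, hence formally equidimensional, Rees's theorem gives $e(\Jac(X,x))=e(\Jac(Z,x))$. The equality $I_x(X,W)=\cid(X,x)$ is then the definition of $\cid(X,x)$ as the common value of $I_x(X,W)$ for general $Z$, which is well defined by the flatness result of \cite{BGR}.

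For the tameness statement I would argue branch by branch. Writing $\overline{X}=\bigsqcup_{i=1}^{r_x}\overline{X_i}$ with each $\overline{X_i}=\Spec\K[[t]]$, the module $\Omega^1_{\overline{X}/X}$ and hence $R_X=\Fitt_0(\Omega^1_{\overline{X}/X})$ split as a direct sum of local contributions, and on the $i$-th branch the different is the ideal generated by the derivatives $f'(t)$ of elements $f\in\Or_{X_i,x}\subseteq\K[[t]]$. The minimal order of such a derivative is $m_i-1$, where $m_i$ is the smallest positive value in the value semigroup, that is, the multiplicity of the branch: tameness of $x$ ensures $\mathrm{char}(\K)\nmid m_i$, so no cancellation occurs in $d(t^{m_i})=m_i\,t^{m_i-1}\,dt$ and the minimal value is genuinely attained. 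Thus $\ord_t R_{X_i}=m_i-1$, and since multiplicity is additive over branches ($m_x=\sum_i m_i$) we get $e(R_X)=e(\nu_*R_X)=\sum_{i=1}^{r_x}(m_i-1)=m_x-r_x$. Substituting this together with the Buchweitz--Greuel relation in the form $2\delta_x=\mu_x+r_x-1$ into the identity of the first part gives $e(\Jac(Z,x))-I_x(X,W)=\mu_x+m_x-1$, and replacing the left-hand side by $e(\Jac(X,x))-\cid(X,x)$ via the genericity step yields the stated formula.
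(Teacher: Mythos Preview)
Your treatment of the genericity statement (Cauchy--Binet, integral closure of the ideal of maximal minors, then Rees's multiplicity theorem) and of the tameness statement (branch-by-branch analysis of derivatives in $\K[[t]]$, using that $\mathrm{char}(\K)\nmid m_i$ prevents cancellation) are correct and are essentially the paper's own arguments; the final substitution using $2\delta_x=\mu_x+r_x-1$ is also exactly as in the paper.

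The gap is in the first identity, which you yourself flag as ``the main obstacle'' and then do not actually prove. The outline you give---linkage duality producing $\omega_X\cong\Id_W\Or_X$, duality for $\nu$ producing two copies of $\delta_x$, and the assertion that ``the Jacobian ideal of $Z$ governs the different of $\nu$''---does not explain how $e(\Jac(Z,x))$ enters the length computation. That last sentence in particular is not a proof: the ramification ideal $R_X$ is an invariant of $\nu\colon\overline X\to X$ alone and has no a priori relation to $Z$; the whole point is to manufacture one.

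The paper makes this link concrete as follows. After a general linear change of coordinates a single maximal minor $\Delta$ of the Jacobian matrix of $Z$ is a principal reduction of $\Jac(Z,x)\Or_{X,x}$, so that $e(\Jac(Z,x))=\dim_\K\Or_{X,x}/\Delta\Or_{X,x}$. The key structural input (Proposition~\ref{prop:structure} and Corollary~\ref{eq:complete_intersection}) is that, inside the module of meromorphic one-forms $\Omega^1_{\Ka_X}$, one has $\omega_Z|_X=\Delta^{-1}\Or_{X,x}\,dx_1$, and hence by the adjunction formula $\omega_{X,x}=\Id_W\omega_Z|_X$ the chain
\[
\Delta\,\Or_{X,x}\,dx_1\ \subset\ \Delta\,\omega_{X,x}=\Id_W\Or_{X,x}\,dx_1\ \subset\ \Or_{X,x}\,dx_1.
\]
The total colength is $e(\Jac(Z,x))$; the upper quotient has length $I_x(X,W)$; and the lower quotient, after cancelling the unit $\Delta\in\Ka_X^\times$, is $\omega_{X,x}/\Or_{X,x}\,dx_1$, the ramification module $R^+_x(dx_1)$ of Montaldi--van Straten. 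Their lemma $\rho(\alpha)=2\delta_x+\rho(d\nu(\alpha))$ then identifies this last length with $2\delta_x+e(R_X)$ for a general linear form $x_1$. So what is missing from your sketch is precisely the identification $\omega_Z|_X=\Delta^{-1}\Or_X\,dx_1$: it is this that simultaneously inserts $\Jac(Z,x)$ into the picture and, via additivity of length, separates the linkage contribution $I_x(X,W)$ from the ramification contribution $2\delta_x+e(R_X)$.
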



Theorem \ref{thm:Milnor} was established for plane curves by Teissier
\cite[Proposition II.1.2]{T73}, for complete intersection curves by Lê \cite{Le74}
and Greuel \cite{Greuel73} (see \cite{DG14} and \cite{BMSS16} for an overview), and for smoothable curves by the two authors and Gaffney
in \cite{BGR}. In \cite{BGR} the authors show that the change in $e(\Jac(X,x))-\cid(X,x)$ across flat families equals the degree of the relative polar variety of smallest dimension, which yields a multiplicity characterization of equisingularity for families of curves. When $(X,x)$ is a Gorenstein curve, in Section \ref{Nash} we interpret $e(\Jac(X,x))-\cid(X,x)$ as the degree of the exceptional divisor of the Nash blowup of $(X,x)$.

Next, we compute the arithmetic genus of a Cohen–Macaulay projective curve 
$X$ that is generically a complete intersection (for instance, when $X$ is reduced), generalizing the classical genus–degree formula for plane curves. Suppose $X\subset\Pj_\K^n$ is defined as the zero locus of the homogeneous polynomials $f_1,\ldots,f_r$. Set $d_i:=\deg(f_i)$ and
assume $d_1\geqslant\ldots\geqslant d_r$. Denote by $(I_X)_{d_i}$ the $d_i$th graded piece of the homogeneous ideal $I_X$ of  $X$ in $\Pj_\K^n$. In Section \ref{sec:complete_int} we construct effectively a complete intersection curve $Z\subset\Pj_\K^n$ containing $X$ such that $X$ and $Z$ agree at the generic point of each irreducible component of $X$. Moreover, $Z$ is defined by homogeneous polynomials $F_1, \ldots, F_{n-1}$ with $F_i\in (I_X)_{d_i}$ for $i=1, \ldots, n-1$. 


Let us fix some notation. 
Denote by $S(X)$ the homogeneous coordinate ring of $X$. For a linear form $h$ denote by $S(X)_{(h)}$ the degree zero elements in the localization $S(X)_{h}$. Identify the Jacobian ideal $\Jac(Z)$ of $Z$ with its image in  $S(X)_{(h)}$. Denote by $\deg(X)$ the degree of $X$ in $\mathbb{P}_{\K}^n$ and by $r_X$ the number of irreducible components of $X$. Recall that $p_a(X):=1-\chi(X,\mathcal{O}_X)$ is the arithmetic genus of $X$, where $\chi(X,\mathcal{O}_X)$ is the Euler characteristic of the structure sheaf $\mathcal{O}_X$.  When $X$ is smooth, the arithmetic genus coincides with the usual genus $g_X:=\dim_{\K}H^{0}(X,\Omega_{X}^1)$.

For a smooth complex plane curve $X$ of degree $d$, Clebsch \cite[Chp.\ 20]{PP16} computed $g_X=1+(d-3)d/2$. More generally, if $X=\mathbb{V}(f_1, \ldots, f_{n-1}) \subset \mathbb{P}_{\K}^n$ is a smooth complete intersection curve, the adjunction formula gives the well-known genus--degree relation 
\begin{equation}\label{classics}
g_X=1+\frac{(d_1+\cdots+d_{n-1} - n-1)\deg(X)}{2}
\tag{$*$}
\end{equation}
where $\deg (X) = \Pi_{i=1}^{n-1}d_i$. This formula remains valid for arbitrary complete intersection curves if one replaces $g_X$ 
with the arithmetic genus $p_a(X)$.
\begin{thmx}\label{thm:degree}
Suppose $X \subset\Pj_\K^n$ is a Cohen--Macaulay curve that is generically a complete intersection. The following holds
\begin{equation}\label{eq:arithmetic genus}
p_a(X)=1+\frac{(d_1+\cdots+d_{n-1}-n-1)\deg(X)-I(X,W)}{2}.
\end{equation}
Let $\mathbb{V}(h)$ be a hyperplane that does not contain $Z_{\mathrm{sing}}$. When $X$ is smooth we have 
\begin{equation}\label{eq:cid-jacobian}
I(X,W)=\dim_{\K} S(X)_{(h)}/\mathrm{Jac}(Z)
\end{equation}
and thus
\begin{equation}\label{eq:genus_X}
g_X=r_X+\frac{(d_1+\cdots+d_{n-1}-n-1)\deg(X)-
\dim_{\K} S(X)_{(h)}/\mathrm{Jac}(Z)}{2}.
\end{equation}
\end{thmx}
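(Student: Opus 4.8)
The plan is to pit the linked curves $X$ and $W=\overline{Z\setminus X}$ against each other, using Grothendieck duality to control $\omega_X$ and a Mayer--Vietoris sequence to control Euler characteristics. Throughout write $e:=d_1+\cdots+d_{n-1}-n-1$, so that adjunction for the complete intersection $Z$ gives $\omega_Z\cong\Or_Z(e)$ and the classical formula $p_a(Z)=1+e\deg(Z)/2$ with $\deg(Z)=\prod_i d_i$. Since $X$ is Cohen--Macaulay of the same dimension as the Gorenstein scheme $Z$, Grothendieck duality for the finite closed immersion $X\hookrightarrow Z$ yields $\omega_X\cong\mathcal{H}om_{\Or_Z}(\Or_X,\omega_Z)$; because the linkage is geometric, $\mathcal{H}om_{\Or_Z}(\Or_X,\Or_Z)=(0:_{\Or_Z}\Id_X)=\Id_W$, whence $\omega_X\cong\Id_W\otimes_{\Or_Z}\omega_Z$. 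Twisting the defining sequence $0\to\Id_W\to\Or_Z\to\Or_W\to 0$ of $W$ in $Z$ by the line bundle $\omega_Z$ then produces the fundamental adjunction sequence
\begin{equation*}
0\longrightarrow\omega_X\longrightarrow\omega_Z\longrightarrow\Or_W(e)\longrightarrow 0.
\end{equation*}

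Next I would record the Mayer--Vietoris sequence. Geometric linkage gives $\Id_X\cap\Id_W=0$ in $\Or_Z$, so
\begin{equation*}
0\longrightarrow\Or_Z\longrightarrow\Or_X\oplus\Or_W\longrightarrow\Or_{X\cap W}\longrightarrow 0,
\end{equation*}
where $X\cap W$ is the zero-dimensional scheme with coordinate ring $\Or_Z/(\Id_X+\Id_W)$, so that $\chi(\Or_{X\cap W})=\dim_{\K}H^0(\Or_{X\cap W})=I(X,W)$ by definition of the discrepancy. Taking Euler characteristics of the adjunction sequence and invoking Serre duality on the Cohen--Macaulay curves $X$ and $Z$ (so that $\chi(\omega_X)=-\chi(\Or_X)$ and $\chi(\omega_Z)=-\chi(\Or_Z)$) together with Riemann--Roch $\chi(\Or_W(e))=e\deg(W)+\chi(\Or_W)$ gives $\chi(\Or_X)-\chi(\Or_W)=\chi(\Or_Z)+e\deg(W)$. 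The Mayer--Vietoris sequence gives $\chi(\Or_X)+\chi(\Or_W)=\chi(\Or_Z)+I(X,W)$. Adding these, substituting the complete intersection value $\chi(\Or_Z)=-e\deg(Z)/2$ and the additivity $\deg(W)=\deg(Z)-\deg(X)$ — itself read off from the Mayer--Vietoris sequence at the level of Hilbert polynomials, the constant polynomial of $X\cap W$ not affecting degrees — the $\deg(Z)$ terms cancel and I obtain $\chi(\Or_X)=(I(X,W)-e\deg(X))/2$. Since $p_a(X)=1-\chi(\Or_X)$, this is precisely \eqref{eq:arithmetic genus}.

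For the smooth case I would pass to local contributions. As $\mathbb{V}(h)$ avoids $Z_{\mathrm{sing}}$, every point of $X$ at which $\Jac(Z)$ is a proper ideal lies in the affine chart $X\setminus\mathbb{V}(h)$, so $\dim_{\K}S(X)_{(h)}/\Jac(Z)=\sum_{x}\dim_{\K}\Or_{X,x}/\Jac(Z,x)$. When $X$ is smooth, $Z$ coincides with $X$ and is therefore smooth at each $x\in X\setminus W$, so the only nonzero contributions come from $x\in X\cap W$; at such a point $\Or_{X,x}$ is a discrete valuation ring, whence $\dim_{\K}\Or_{X,x}/\Jac(Z,x)=e(\Jac(Z,x))$. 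Finally, smoothness forces $\delta_x=0$ and $R_X=\Or_{\overline{X}}$, so $e(R_X)=0$; the first identity of Theorem \ref{thm:Milnor} then reads $e(\Jac(Z,x))=I_x(X,W)$. Summing over $x\in X\cap W$ yields \eqref{eq:cid-jacobian}. The genus formula \eqref{eq:genus_X} follows at once: for a smooth projective curve $h^0(\Or_X)=r_X$ and, by Serre duality, $h^1(\Or_X)=g_X$, so $p_a(X)=1-r_X+g_X$; substituting \eqref{eq:arithmetic genus} and \eqref{eq:cid-jacobian} and solving for $g_X$ gives the stated identity.

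The main obstacle is the adjunction sequence of the first paragraph: one must justify the Grothendieck-duality isomorphism $\omega_X\cong\mathcal{H}om_{\Or_Z}(\Or_X,\omega_Z)$ for the finite map $X\hookrightarrow Z$ and, crucially, identify $\mathcal{H}om_{\Or_Z}(\Or_X,\Or_Z)$ with the ideal $\Id_W$ of the residual curve. This is exactly where the hypotheses that $X$ is Cohen--Macaulay and that the linkage is geometric — so that $\Id_W=(0:_{\Or_Z}\Id_X)$ and $\Id_X\cap\Id_W=0$ in $\Or_Z$ — are indispensable. Once this identity and the resulting short exact sequence are in place, the remainder is the essentially formal Euler-characteristic bookkeeping above, and the reduction to Theorem \ref{thm:Milnor} in the smooth case is routine.
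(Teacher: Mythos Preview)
Your argument is correct and rests on the same three ingredients as the paper: the adjunction identity $i_*\omega_X\cong\Id_W\omega_Z$ (Proposition~\ref{thm:adjunction}), the Mayer--Vietoris sequence $0\to\Or_Z\to\Or_X\oplus\Or_W\to\Or_{X\cap W}\to0$, and Serre duality $\chi(\omega_X)=-\chi(\Or_X)$ for Cohen--Macaulay curves. The only real difference is in how $\chi(\Id_W\otimes\omega_Z)$ is unpacked. The paper first proves a general d\'evissage formula (Proposition~\ref{prop:ranks}) for $\chi(\mathcal{E}\otimes\mathcal{F})$ with $\mathcal{E}$ locally free on an arbitrary proper one-dimensional scheme, then computes the generic ranks $r_{\xi_i}(\Id_W)$ from the sequence $0\to\Id_W\to\Or_Z\to\Or_W\to0$; this produces $\chi(\omega_X)=e\deg(X)+\chi(\Or_Z)-\chi(\Or_W)$ directly, and the terms $\chi(\Or_Z)$, $\chi(\Or_W)$ cancel against the Mayer--Vietoris contribution without their values ever being needed. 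You instead twist that same sequence by the line bundle $\omega_Z\cong\Or_Z(e)$, apply Riemann--Roch on $W$, and then feed in the classical complete-intersection value $\chi(\Or_Z)=-e\deg(Z)/2$ together with $\deg(W)=\deg(Z)-\deg(X)$. Your route is shorter and sidesteps the auxiliary Proposition~\ref{prop:ranks}; the paper's route is marginally more self-contained in that it does not presuppose the complete-intersection genus formula as an input. For \eqref{eq:cid-jacobian} and \eqref{eq:genus_X}, your reduction to Theorem~\ref{thm:Milnor} at smooth points (where $\delta_x=e(R_X)=0$) and the identity $p_a=1-r_X+g_X$ for a smooth projective curve match the paper's treatment via Corollary~\ref{smooth curve}, Proposition~\ref{jacobian conservation}\,(i), and the Hironaka--Serre formula it cites.
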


The key ingredient in the proofs of Theorem \ref{thm:Milnor} and Theorem \ref{thm:degree} is an observation from Grothendieck duality theory. 
Write $\omega_{X}$ and $\omega_{Z}$ for the dualizing sheaves of $X$ and
$Z$ and $i:X\hookrightarrow Z$ for the closed immersion of $X$ into $Z$. We have the following adjunction-type identity (see \cite[Cor.\ 18]{KlRel}, \cite[Prop.\ 9.1 c)]{EM} and \cite[Sect.\ 2]{Ka}) 
\begin{equation}\label{eq:adjunction}
\omega_{X}=i^*(\Id_W\omega_{Z})
\end{equation}
which is true more generally assuming that $X$ and $Z$ are equidimensional of the same dimension and $Z$ is Gorenstein. The proof of (\ref{eq:arithmetic genus}) is a direct computation of the Euler characteristics of the sheaves appearing in (\ref{eq:adjunction}).

The arithmetic genus formula (\ref{eq:arithmetic genus}) is closely related to the well-known linkage formula of Peskine and Szpiro \cite[Proposition 3.1]{PS}, which describes the difference $p_a(X)-p_a(W)$. In Corollary \ref{PS} we recover their result. In the Appendix, Marc Chardin shows that one can use resolutions to give a proof of (\ref{eq:arithmetic genus}) with no assumptions on $\K$.  In \cite{RT} (\ref{eq:arithmetic genus}) is used to obtain a generalization of the Plücker formula for plane curves to curves of  arbitrary codimension (see Remark \ref{Plücker}).

The significance of (\ref{eq:cid-jacobian}), which is derived in Proposition \ref{jacobian conservation} as a consequence of Theorem \ref{thm:Milnor}, is that to find $I(X,W)$ when $X$ is smooth one does not need to determine the ideal of $W$ in the homogeneous coordinate ring of $Z$, which can be computationally quite involved. 
Moreover,  in Proposition \ref{smoothable sing} we show that (\ref{eq:cid-jacobian}) generalizes when $X$ has locally smoothable singularities.  




There is one instance where (\ref{eq:arithmetic genus}) takes a particularly pleasing form. We say that $X \subset \mathbb{P}_{\K}^n$ is an {\it almost complete intersection} (aci) curve if $X$ can be defined as the zero locus of $n$ homogeneous polynomials $f_1,\ldots,f_n$\footnote{In commutative algebra one requires that the $n$ polynomials form a minimal set of equations for $X$.}.  We say that $X$ is {\it nondegenerate} if $X$ is not contained in a hyperplane of $\mathbb{P}_{\K}^n$.

\begin{corx}\label{genus-degree} Suppose $X \subset\Pj_\K^n$ is a Cohen--Macaulay aci curve that is generically a complete intersection. Then
\begin{equation}\label{aci genus}
p_a(X)=1+\frac{(d_1+\cdots+d_{n}-n-1)\deg(X)-\Pi_{i=1}^n d_i}{2}.
\end{equation}
Suppose $n \geq 3$ and suppose $X$ is a nondegenerate, reduced and connected aci curve. Then 
\begin{equation}\label{degree ineq}
\deg(X) \geq \frac{d_n^n-2}{nd_n-n-1}.
\end{equation}
\end{corx}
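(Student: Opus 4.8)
The plan is to derive the genus formula \eqref{aci genus} from Theorem \ref{thm:degree} by evaluating $I(X,W)$ for an almost complete intersection. Applying Theorem \ref{thm:degree} to the link $Z$, a complete intersection of type $(d_1,\dots,d_{n-1})$, reduces everything to the single identity
$$I(X,W)=\prod_{i=1}^{n}d_i-d_n\deg(X).$$
Since $\deg(W)=\deg(Z)-\deg(X)=\prod_{i=1}^{n-1}d_i-\deg(X)$, this is equivalent to $I(X,W)=d_n\deg(W)$. Granting it, substituting into \eqref{eq:arithmetic genus} turns the numerator $(d_1+\dots+d_{n-1}-n-1)\deg(X)-I(X,W)$ into $(d_1+\dots+d_n-n-1)\deg(X)-\prod_i d_i$, which is exactly \eqref{aci genus}.

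To establish $I(X,W)=d_n\deg(W)$ I would choose the defining forms of $Z$ in triangular shape, $F_i=f_i+\sum_{j>i}b_{ij}f_j$ for $i=1,\dots,n-1$, with general coefficient forms $b_{ij}$ of degree $d_i-d_j$; these satisfy $F_i\in(I_X)_{d_i}$, and for general $b_{ij}$ they form a complete intersection meeting $X$ at the generic point of each component of $X$, as permitted by the construction of Section \ref{sec:complete_int}. A descending induction on $i$ (solving $F_i$ for $f_i$) shows $(F_1,\dots,F_{n-1},f_n)=(f_1,\dots,f_n)$, so at every closed point the ideal of $X$ in $\Or_Z$ is principal, $\Id_X=(f_n)\Or_Z$. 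Consequently $I(X,W)=\dim_\K\Or_Z/(\Id_X+\Id_W)=\dim_\K\Or_W/(f_n)\Or_W$. Because $W$ is Cohen--Macaulay of pure dimension one (being linked to the Cohen--Macaulay curve $X$ by $Z$) and $f_n$ vanishes on no component of $W$---otherwise that component would lie in $X$---the form $f_n$ is a nonzerodivisor on $\Or_W$; the exact sequence $0\to\Or_W(-d_n)\to\Or_W\to\Or_W/(f_n)\to 0$ together with $\chi(\Or_W(m))=\deg(W)\,m+\chi(\Or_W)$ then yields $\dim_\K\Or_W/(f_n)=d_n\deg(W)$, as wanted.

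For the degree bound I would first observe that nondegeneracy forces $d_n\geq 2$: if $d_n=1$ the linear form $f_n$ would contain $X$ in a hyperplane. Hence all $d_i\geq d_n\geq 2$. As $X$ is reduced and connected, $H^0(X,\Or_X)=\K$, so $p_a(X)=h^1(X,\Or_X)\geq 0$. Imposing $p_a(X)\geq 0$ on \eqref{aci genus} gives $(d_1+\dots+d_n-n-1)\deg(X)\geq\prod_i d_i-2$, and since $\sum_i d_i\geq nd_n\geq 2n>n+1$ the coefficient is positive, whence
$$\deg(X)\geq\frac{\prod_{i=1}^n d_i-2}{\sum_{i=1}^n d_i-n-1}.$$

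It then remains to minimize $g(d_1,\dots,d_n):=\frac{\prod_i d_i-2}{\sum_i d_i-n-1}$ over $d_i\geq d_n$. Viewing $g$ as a function of a single $d_i$ with the others fixed, the numerator of $\partial g/\partial d_i$ equals $\big(\prod_{j\neq i}d_j\big)\big(\sum_{j\neq i}d_j-n-1\big)+2$; since $\sum_{j\neq i}d_j\geq 2(n-1)$ and $n\geq 3$ give $\sum_{j\neq i}d_j-n-1\geq n-3\geq 0$, this is positive, so $g$ is nondecreasing in each variable and is minimized when $d_1=\dots=d_{n-1}=d_n$, where it equals $(d_n^n-2)/(nd_n-n-1)$. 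Chaining this with the previous display proves \eqref{degree ineq}. The one genuine obstacle is the identity $I(X,W)=d_n\deg(W)$: the work lies in arranging the link so that a single residual equation $f_n$ cuts $X$ out of $Z$ and in checking that the triangular combination is still a bona fide complete intersection agreeing with $X$ generically; once $\Id_X=(f_n)\Or_Z$ is secured, both \eqref{aci genus} and \eqref{degree ineq} follow by elementary bookkeeping.
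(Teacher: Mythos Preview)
Your argument is correct and follows essentially the same route as the paper: both obtain \eqref{aci genus} from \eqref{eq:arithmetic genus} by showing $I(X,W)=d_n\deg(W)$ via the triangular construction of $Z$ so that $\Id_X=(f_n)\Or_Z$, and both deduce \eqref{degree ineq} from $p_a(X)\geq 0$ together with monotonicity of $(\prod d_i-2)/(\sum d_i-n-1)$ in each $d_i$. The computations of $I(X,W)$ differ only in language---you use the Euler characteristic of the short exact sequence on $W$, the paper writes the same quantity as $\mathbb{V}(f_n)\cdot[W]=d_n\,l\cdot[W]$---and the monotonicity checks differ only in that you differentiate while the paper compares $q(d_j+1)$ with $q(d_j)$; both rest on $\sum_{j\neq i}d_j\geq 2(n-1)\geq n+1$ for $n\geq 3$.

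The one genuine difference is your justification of $p_a(X)\geq 0$. You use the elementary fact that a reduced, connected, proper $\K$-scheme has $H^0(X,\Or_X)=\K$, whence $p_a(X)=h^1(X,\Or_X)\geq 0$. The paper instead invokes Hironaka's formula $p_a(X)=\sum g(\overline{X_i})+\sum_x\delta_x-(R-1)$ together with $\delta_x\geq r_x-1$ and $\sum_x(r_x-1)\geq R-1$. Your argument is shorter and avoids the detour through normalization and the delta invariants; the paper's route, on the other hand, makes visible exactly which singular contributions force $p_a(X)$ to be nonnegative and connects naturally with the local invariants featured elsewhere in the paper.
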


It follows from our definition that a complete intersection curve $X \subset \mathbb{P}_{\K}^n$ is an aci curve by repeating one of the equations of $X$. In this case (\ref{aci genus}) recovers (\ref{classics}).

The paper is organized as follows. In Section \ref{sec:dualizing_sheaf} we introduce the basic notions and results about dualizing sheaves and their affine structure needed for the proofs of \eqref{eq:adjunction} and Theorem \ref{thm:Milnor}. We prove \eqref{eq:adjunction} using Grothendieck's duality theorem for finite morphisms. In Section \ref{sec:Milnor} we use results of Montaldi and van Straten \cite{MVS} about ramification modules and \eqref{eq:adjunction} to prove Theorem \ref{thm:Milnor}. In \cite{Pi} defines the $\omega$-Jacobian ideal of a Cohen-Macaulay variety $X$. She shows that when $X$ is Gorenstein, the blowup of $X$ with center the $\omega$-Jacobian ideal gives the Nash modification of $X$. In Section \ref{Nash}, using results from Section \ref{sec:dualizing_sheaf}, we compute Piene's $\omega$-Jacobian \cite{Pi}. In particular, we show that when $X$ is Gorenstein, the $\omega$-Jacobian is equal to the usual Jacobian ideal of $X$ if and only if $X$ is a local complete intersection, thus answering a question of Piene. When $(X,x)$ is a Gorenstein curve, we show that $e(\Jac(X,x))-\cid(X,x)$ is the degree of the exceptional divisor of the Nash blowup of $(X,x)$.

We begin Section \ref{sec:degree} by generalizing an Euler characteristic formula for the degree of a locally free sheaf and then use it along with \eqref{eq:adjunction} to give a proof of Theorem \ref{thm:degree}  (\ref{eq:arithmetic genus}). As a corollary, we show that $I(X,W)$ remains constant under flat deformations of projective curves, and we recover the Peskine--Szpiro formula for the difference of the arithmetic genera of directly linked curves.

In Section \ref{construction} we give an explicit construction of the complete intersection $Z$ using prime avoidance and linear algebra. In Section \ref{computability}, we explain how this construction can be implemented with a computer algebra package. In Proposition \ref{jacobian conservation}, we derive (\ref{eq:cid-jacobian}) from Theorem \ref{thm:Milnor}. We then show how to compute $I(X,W)$ when $X$ is smoothable or when $X$ has locally smoothable singularities. For an aci curve we compute $I(X,W)= \Pi_{i=1}^n d_i-d_n\mathrm{deg}(X)$ which proves (\ref{aci genus}). Using a classical formula for the arithmetic genus due to Hironaka, we show that under the hypothesis preceding (\ref{degree ineq}), the arithmetic genus $p_a(X)$ is nonnegative. Then (\ref{degree ineq})  follows from (\ref{aci genus}). In Proposition \ref{LGT-aci}, we explain how to compute the complete intersection discrepancy for locally aci curves directly from the equations of $X$ and $Z$.
Finally, we show how to construct a general $Z$ such that when $X$ is a local complete intersection, the points of intersection of  $X$ with $W$ are ordinary double points of $Z$, and their number is equal to $I(X,W)$.

\textbf{Acknowledgments.} We are indebted to Marc Chardin for providing us with helpful comments and bringing to our attention results from linkage theory that we were not aware of. We also thank Terence Gaffney and Bernard Teissier for stimulating conversations. The first author was supported by a ``Peter Beron i NIE" fellowship [KP-06-DB-5] from the Bulgarian Science Fund. The second author was supported by the European Union’s Horizon Europe research and innovation programme under Marie Sk\l{adowska}-Curie Actions project GTSP-$101111114$.

\section{The dualizing sheaf}\label{sec:dualizing_sheaf}

Let us fix some notations and recall some facts about meromorphic functions (see
\cite{KlMisc} and \cite[\href{https://stacks.math.columbia.edu/tag/0EMF}{Tag 0EMF}]{St}).
For any ring $A$, write $K(A)$ for the total
quotient ring of $A$. Suppose $X$ is a reduced scheme of finite type 
over a field $\K$ and of pure dimension $d$. Write $\Ka_X$ for the \textit{sheaf
of meromorphic functions on} $X$. For an open affine subset $U=\mathrm{Spec}(A)\subset X$,
$\Ka_X|_U$ is the module associated to $K(A)$. Let $K=\prod_i K_i$ be the product of the
residue fields $K_i=\kappa(\xi_i)$ of the generic points of the irreducible components of $X$.
Write $j_K:\mathrm{Spec}(K)\to X$ for the canonical map. We have
$\Ka_X=(j_K)_*\Or_{\mathrm{Spec}(K)}$.
We define \textit{the sheaf of meromorphic one-forms} $\Omega^1_{\Ka_X/\K}$ as
\[\Omega^1_{\Ka_X/\K}:=\Omega^1_{X/\K}\otimes_{\Or_X}\Ka_X.\]
Equivalently, $\Omega^1_{\Ka_X/\K}=(j_K)_*\Omega^1_{K/\K}$. Hence it is clear that
$(\Omega^1_{\Ka_X/\K})_x=\prod_{i,x\in\overline{\{\xi_i}\}}\Omega^1_{K_i/\K}$ for each
$x\in X$. Furthermore, if $U=\mathrm{Spec}(A)\subset X$, then $\Omega^1_{\Ka_X/\K}|_U$ is the
module associated to $\Omega^1_{K(A)/\K}$. We will write
$\Omega^k_{X/\K}=\wedge^k\Omega^1_{X/\K}$ and
$\Omega^k_{\Ka_X/\K}=\wedge^k\Omega^1_{\Ka_X/\K}$ for the sheaf of $k$ differential
forms and the sheaf of $k$ meromorphic forms, respectively.



\subsection{Dualizing sheaves}\label{subsec:complementary}
We review some basic facts about the dualizing sheaf of $X$. Our references are \cite[Chapter VII]{Ha}, \cite[Definition (1)]{KlRel} and \cite[Section 9.2]{EM}. To
every equidimensional scheme of finite type $X$ over a field $\K$ of dimension $d$, one
associates a coherent sheaf $\omega_{X}$ with the following properties:
\begin{enumerate}
\item There exists a structural morphism $c_X:\Omega^d_{X/\K}\to\omega_X$, called the
\textit{canonical map}.
\item If $X$ is non-singular, of dimension $d$, then the canonical map is an isomorphism
$\omega_X\simeq\Omega^d_{X/\K}$.
\item The definition is local: if $U\subset X$ is an open subscheme, then there is a
canonical isomorphism $\omega_{U}\simeq\omega_{X}|_{U}$.
\item (Duality theorem for closed embeddings \cite[III, Proposition 7.2, p. 179]{Ha})
If $X\hookrightarrow V$ is a closed embedding of pure codimension $e$ and $V$
is equidimensional Cohen-Macaulay scheme, then there is a canonical isomorphism
\[\omega_{X/\K}\simeq\underline{\mathrm{Ext}}^e_{\Or_V}(\Or_X,\omega_{V}).\]
\item (Duality theorem for finite morphisms \cite[III, Proposition 6.7, p. 170]{Ha})
If $p:X\to Y$ is a finite morphism between equidimensional
schemes of finite type with $Y$ Cohen-Macaulay, we have a canonical isomorphism
\[\Bar{p}:p_*\omega_{X/\K}\simeq\underline\Hom_{\Or_{Y}}(p_*\Or_{X},\omega_{Y}).\]
\item $X$ is Gorenstein (\ie every stalk $\Or_{X,x},\,x\in X$ is a Gorenstein local ring)
if and only if $\omega_{X}$ is locally free of rank one.
\end{enumerate}

By facts (3) and (5) we can construct $\omega_{X/\K}$ locally. Indeed for any affine open
$\Spec(A)\subset X$ we can find a Noether normalization $\K[\mathbf{x}]:=\K[x_1,\ldots,x_d]
\subset A$ where $d=\dim(X)$. Thus $\omega_{X/\K}|_U$ is the sheaf associated to the module
$\mathrm{Hom}_{\K[\mathbf{x}]}(A,\K[\mathbf{x}])$.

Observe that for a generically reduced irreducible component $Y$ of $X$, the stalk
$(\omega_{X/\K})_\xi$ at the generic point $\xi$ of $Y$ is $\Omega^1_{\kappa(\xi)/\K}$.

\subsection{The adjunction-type formula}

Let $X$ be a Cohen-Macaulay scheme and
$Z$ be a Gorenstein scheme. Assume $X$ and $Z$ are of the same
dimension and assume there exists a closed immersion $i:X\hookrightarrow Z$ such that $X=Z$ at the the generic point of each irreducible component of $X$. Denote by $\Id_X$ and $\Id_W$ the ideal sheaves of $X$ and $W$ in
$\Or_Z$, respectively. The ideals of $X$ and $W$ in $\Or_Z$ are related by $\Id_W=(0:_{\Or_Z}\Id_X)$. Write $\omega_{X}$ and $\omega_{Z}$ for the dualizing sheaves of $X$ and
$Z$. Denote by $K_Z$ the canonical class of $Z$ and by $K_X$ the canonical class
of $X$ when $X$ is Gorenstein.

\begin{prop}\label{thm:adjunction}
Suppose that $X$ is Cohen-Macaulay and $Z$ is Gorenstein. We have
$$\omega_{X}=i^*(\Id_W\omega_{Z})=\Id_W\Or_X\otimes\omega_Z|_X.$$
Furthermore, if $X$ is Gorenstein, then the image of $\Id_W$ in $\mathcal{O}_X$
defines a Cartier divisor $D_W$ in $X$ and
\begin{equation}
K_X=K_{Z}|_{X} - D_W\label{eq:adjunction-class}
\end{equation}
where $K_X$ and $K_Z$ are the canonical divisor classes of $X$ and $Z$, respectively.
\end{prop}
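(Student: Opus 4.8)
The plan is to reduce everything to a local duality computation and then trivialize $\omega_Z$. Since the assertion is local on $Z$ (fact (3)) and $Z$ is Gorenstein, I may work over an affine open $\Spec(R)$ with $R$ Gorenstein of dimension $d$, so that $\omega_Z$ is invertible (fact (6)). The main tool is the duality theorem for finite morphisms (fact (5)): the closed immersion $i\colon X\hookrightarrow Z$ is finite and $Z$ is Cohen--Macaulay, so
\[
i_*\omega_X\;\cong\;\underline{\Hom}_{\Or_Z}(i_*\Or_X,\omega_Z)=\underline{\Hom}_{\Or_Z}(\Or_Z/\Id_X,\omega_Z).
\]

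To identify the right-hand side, I note that a homomorphism out of $\Or_Z/\Id_X$ is the same as a section of $\omega_Z$ killed by $\Id_X$, whence $\underline{\Hom}_{\Or_Z}(\Or_Z/\Id_X,\omega_Z)=(0:_{\omega_Z}\Id_X)$. Because $\omega_Z$ is locally free of rank one, choosing a local generator identifies this canonically with $(0:_{\Or_Z}\Id_X)\,\omega_Z=\Id_W\omega_Z$. As $\Id_W\omega_Z$ is annihilated by $\Id_X$, it is the pushforward of an $\Or_X$-module and satisfies $\Or_X\otimes_{\Or_Z}\Id_W\omega_Z=\Id_W\omega_Z$; applying $i^*$ to the displayed isomorphism then gives the first equality $\omega_X=i^*(\Id_W\omega_Z)$.

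For the second equality I would unwind $i^*(\Id_W\omega_Z)=(\Or_X\otimes_{\Or_Z}\Id_W)\otimes_{\Or_X}\omega_Z|_X$ using local freeness of $\omega_Z$, and then check that the canonical surjection $\Or_X\otimes_{\Or_Z}\Id_W\to\Id_W\Or_X$ onto the image ideal is an isomorphism. Its kernel is $\Id_W\cap\Id_X$, so it suffices to prove $\Id_W\cap\Id_X=0$. Here I use that $Z$ is Cohen--Macaulay, hence $\Or_Z$ has no embedded primes and injects into $\prod_{\p\in\mathrm{Ass}}\Or_{Z,\p}$: an element $a\in\Id_W\cap\Id_X$ localizes to $0$ at every minimal prime $\p$, since if $\Id_X\subseteq\p$ then $\p$ is the generic point of a component of $X$ where $(\Id_X)_\p=0$ by hypothesis, while otherwise $(\Id_X)_\p$ is the unit ideal and $(\Id_W)_\p=(0:(\Id_X)_\p)=0$. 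Thus $a=0$ and $\Id_W\Or_X=\Or_X\otimes_{\Or_Z}\Id_W$.

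Finally, assuming $X$ Gorenstein, $\omega_X$ is invertible (fact (6)), and $\omega_Z|_X=i^*\omega_Z$ is invertible because $Z$ is Gorenstein. Solving the established relation gives $\Id_W\Or_X\cong\omega_X\otimes(\omega_Z|_X)^{-1}$, an invertible ideal sheaf of $\Or_X$; it therefore defines an effective Cartier divisor $D_W$ with $\Or_X(-D_W)=\Id_W\Or_X$. Passing to divisor classes in $\omega_X=\Or_X(-D_W)\otimes\omega_Z|_X$ yields $K_X=K_Z|_X-D_W$. I expect the main obstacle to be bookkeeping rather than a hard estimate: transporting the abstract duality isomorphism onto the explicit ideal $\Id_W$ canonically, and in particular verifying $\Id_W\cap\Id_X=0$ so that the image ideal $\Id_W\Or_X$ and the tensor product $\Or_X\otimes_{\Or_Z}\Id_W$ coincide; once $\omega_Z$ is trivialized locally, the remaining identifications are formal.
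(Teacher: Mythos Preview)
Your proof is correct and follows essentially the same route as the paper: apply duality for the finite morphism $i$, trivialize $\omega_Z$ locally to identify $\underline{\Hom}_{\Or_Z}(\Or_Z/\Id_X,\omega_Z)$ with $\Id_W\omega_Z$, and then read off the divisor-class statement. Your version is in fact slightly more self-contained—you verify $\Id_W\cap\Id_X=0$ to justify $\Or_X\otimes_{\Or_Z}\Id_W\cong\Id_W\Or_X$ and deduce that $\Id_W\Or_X$ is invertible directly from the first part, whereas the paper cites linkage-theoretic references for the Cartier divisor claim; one small imprecision is that the kernel of $\Or_X\otimes_{\Or_Z}\Id_W\to\Id_W\Or_X$ is $(\Id_W\cap\Id_X)/(\Id_X\Id_W)$ rather than $\Id_W\cap\Id_X$ itself, but your vanishing argument still dispatches it.
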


Proposition \ref{thm:adjunction} appears in the affine setting in the works of  Ein and
Musta\textcommabelow{t}\u{a} in \cite{EM} and Kawakita in \cite{Ka}. It is a well
known result for curves (see
\cite[\href{https://stacks.math.columbia.edu/tag/0E34}{Tag 0E34}]{St}). Here we give
a short proof for any reduced, equidimensional scheme $X$ of finite type over $\K$.

\begin{proof}
By the duality theorem for finite morphisms, we have a canonical map
\[\Bar{i}:i_*\omega_{X}\xrightarrow{\simeq}\underline\Hom_{\Or_Z}(i_*\Or_{X},\omega_{Z}).\]
Since $\Or_Z\to i_*\Or_X$ is surjective, there is an injective morphism
\[\underline\Hom_{\Or_Z}(i_*\Or_{X},\omega_{Z})\to
\underline\Hom_{\Or_Z}(\Or_Z,\omega_{Z/\K})=\omega_{Z}\]
that takes $\vartheta$ and sends it to $\vartheta(1)$.

Below we give a short proof of (\ref{eq:adjunction}).
We need to show that there is a canonical isomorphism
\[\underline\Hom_{\Or_Z}(i_*\Or_{X},\omega_{Z})\simeq\Id_W\omega_{Z}.\]
Since $i$ is an affine morphism and the definitions of $\omega_X$ and $\omega_Z$ are local
(see fact (3) in Section \ref{subsec:complementary}), we can reduce to the affine case. Thus assume
$Z=\mathrm{Spec}(A),X=\mathrm{Spec}(A/I)$ and $W=\mathrm{Spec}(A/J)$. Because $Z$ is
Gorenstein, we can further assume that the affine open
cover we are considering trivializes $\omega_{Z}$. Hence we may suppose that $\omega_{Z}$
is the coherent sheaf associated to $\omega_{A}=A\varpi$, where $\varpi$ is a basis element
of $\omega_{A}$. We have the following natural isomorphisms
$$\Hom_A(A/I,\omega_A) = \Hom_A(A/I,A\varpi)
\simeq\{\alpha\varpi\in A\varpi;\:\alpha I=0\} = (0:_AI)\varpi=J\omega_A.$$
The second isomorphism identifies $A$-linear maps $\vartheta:A/I\to A\varpi$ with
$\vartheta(1)=\alpha\varpi$ where $\alpha\in A$. This $\alpha$ should satisfy
$\alpha I=0$ since $\alpha I\varpi=I\vartheta(1)=0$. Conversely, for an $\alpha$ such that
$\alpha I=0$, define $\bar{\vartheta}:A\to A\varpi$ that sends $1$ to $\alpha\varpi$.
The condition $\alpha I=0$ is equivalent to $I\subset\mathrm{Ker}(\bar{\vartheta})$. Thus
$\bar{\vartheta}$ factors through the canonical morphism $A\to A/I$. This gives a map
$\vartheta:A/I\to A\varpi$ which sends $1$ to $\alpha$.

The proof of \eqref{eq:adjunction-class} follows from linkage theory. We know that
$\Id_W\Or_X$ is an avatar of $\omega_{X/\K}$ (see \cite[Remarque 1.5]{PS} and
\cite[Thm. 21.23]{Ei}, or Corollary \ref{eq:complete_intersection}). Because $X$ is Gorenstein, $\Id_W\Or_X$ is locally principal and is
thus it defines an effective Cartier divisor $D_W$. The formula for the canonical class
is thus a consequence of \eqref{eq:adjunction}.
\end{proof}

\subsection{The affine structure of dualizing sheaves}\label{sec:affine_structure}
In this section we give a detailed proof of parts b) and d) of \cite[Proposition 9.1]{EM}.
Preserve the notations from the previous section. Assume from now on that $X$ is
reduced and $Z$ is reduced along $X$ and that $Z$ is a local complete intersection (lci). We
are interested in the local structure of $\omega_Z$ and $\omega_X$, so we assume that $X$ and
$Z$ are affine. Since $X$ and $Z$ are of finite type over $\K$ we can embed them in some
affine space $A:=\A^n_{\K}$. Suppose $Z$ is defined by an ideal $I_Z=(F_1,\ldots,F_e)$, where
$e=n-d$. Write $N:=I_Z/I_Z^2$ for the normal sheaf of $Z$ in $A$. Observe that $\bigwedge^eN$
is free of rank one and with generator $\Bar{F}=\Bar{F}_1\wedge\cdots\wedge\Bar{F}_e$, where
we write $\Bar{G}$ for the image of $G$ in $N$. Write $dF=dF_1\wedge\cdots\wedge dF_e\in
\Omega^e_{A/\K}$. Let $x_1,\ldots,x_n$  be coordinates on $A$. Since $Z$ is reduced along $X$,
up to a general linear change of variables, we can further assume that the minor
\[\Delta=\det\left(\frac{\partial F_i}{\partial x_{d+j}}\right)_{1\leqslant i,j\leqslant e}\]
of the Jacobian matrix of $Z$ in $A$ is not identically zero on each irreducible component of
$X$. In other words $\Delta\in\Ka_X^\times$.

\begin{prop}\label{prop:structure}
With the above assumptions the following holds:
\begin{enumerate}
\item[\rm{(1)}] We have
\[\omega_Z=\left(\Omega^n_{A/\K}|_Z\right)\otimes_{\Or_Z}\left(\bigwedge^eN\right)^\vee.\]
\item[\rm{(2)}] The canonical map $c_Z$ is defined by
\[(c_Z)(\alpha_{i_1}\wedge\cdots\wedge\alpha_{i_d})=
(\alpha_{i_1}\wedge\cdots\wedge\alpha_{i_d}\wedge dF)\otimes(\Bar{F}^\vee).\]
The element $\Bar{F}^\vee$ is the linear form $\bigwedge^eN\to\Or_Z$ that takes $\Bar{F}$
to $1$.
\item[\rm{(3)}] The sheaf $\omega_{Z}|_X$ is embedded into $\Omega^d_{\Ka_X/\K}$ and it is
identified with $\Delta^{-1}\Or_Xdx_1\wedge\cdots\wedge dx_d$.
\end{enumerate}
\end{prop}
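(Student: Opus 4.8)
The plan is to deduce all three parts from the duality theorem for closed embeddings (fact (4) of Section \ref{subsec:complementary}), using that $A=\A^n_\K$ is smooth, hence Cohen--Macaulay, and that $Z\hookrightarrow A$ has pure codimension $e$.

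For (1), I would compute $\omega_Z\simeq\underline\Ext^e_{\Or_A}(\Or_Z,\omega_A)$ by means of the Koszul resolution. Since $Z$ is an lci, the sequence $F_1,\ldots,F_e$ is regular, so the Koszul complex on $E:=\Or_A^e$ (with $e_i\mapsto F_i$) resolves $\Or_Z$. Applying $\underline\Hom_{\Or_A}(-,\omega_A)$ and passing to top cohomology identifies $\underline\Ext^e_{\Or_A}(\Or_Z,\omega_A)$ with the cokernel $\Or_Z\otimes_{\Or_A}\bigwedge^eE^\vee\otimes_{\Or_A}\omega_A$. I then substitute $\omega_A=\Omega^n_{A/\K}$ (fact (2)) and use the isomorphism $E|_Z\simeq N$---valid because for an lci the conormal module is free on the chosen regular sequence---to rewrite $\Or_Z\otimes_{\Or_A}\bigwedge^eE^\vee\simeq(\bigwedge^eN)^\vee$. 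This gives (1).

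For (2), I would make the canonical map explicit by unwinding the fundamental local isomorphism behind (1). The assignment $\eta\otimes\bar G\mapsto\tilde\eta\wedge dG$, for any lift $\tilde\eta\in\Omega^d_{A/\K}|_Z$ of $\eta$, defines a map $\Omega^d_{Z/\K}\otimes\bigwedge^eN\to\Omega^n_{A/\K}|_Z$: it is independent of the lift because the differential of any element of $I_Z$ restricts on $Z$ to a combination of $dF_1,\ldots,dF_e$, which dies upon wedging with $dF$. Dualizing the $\bigwedge^eN$-factor produces the stated map $\eta\mapsto(\tilde\eta\wedge dF)\otimes\bar F^\vee$, and the real content is that it coincides with the structural map $c_Z$. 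I expect this to be the main obstacle: matching the concrete ``wedge with $dF$'' map against the abstractly defined $c_Z$ requires tracking $c_Z$ through the fundamental local isomorphism of \cite[III, Prop.\ 7.2]{Ha}. I would settle it by comparison on a dense open: $\omega_Z$ is Cohen--Macaulay (as $Z$ is lci), hence has no embedded primes, so a morphism out of $\Omega^d_{Z/\K}$ into it is determined by its restriction to the smooth locus of $Z$; there $c_Z$ is the isomorphism of fact (2) and reduces to the classical Poincaré residue, which is exactly our formula.

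For (3), I would restrict (1) to $X$. Then $\omega_Z|_X=\Omega^n_{A/\K}|_X\otimes_{\Or_X}(\bigwedge^eN|_X)^\vee$ is free of rank one over $\Or_X$, generated by $\sigma:=(dx_1\wedge\cdots\wedge dx_n)\otimes\bar F^\vee$. Since $Z$ is reduced along $X$ and the coordinates are general, $\Delta\in\Ka_X^\times$ and $dx_1\wedge\cdots\wedge dx_d$ is a $\Ka_X$-basis of $\Omega^d_{\Ka_X/\K}$; tensoring $c_Z$ with $\Ka_X$ yields the embedding $\omega_Z|_X\hookrightarrow\omega_Z|_X\otimes_{\Or_X}\Ka_X\simeq\Omega^d_{\Ka_X/\K}$ under which $c_Z(\eta)$ corresponds to $\eta$. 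Applying (2) to $\eta=dx_1\wedge\cdots\wedge dx_d$ and expanding $dx_1\wedge\cdots\wedge dx_d\wedge dF_1\wedge\cdots\wedge dF_e$---where only the entries $\partial F_i/\partial x_{d+j}$ survive---gives $c_Z(dx_1\wedge\cdots\wedge dx_d)=\Delta\,\sigma$. Hence $\sigma$ corresponds to $\Delta^{-1}dx_1\wedge\cdots\wedge dx_d$, and as $\sigma$ generates $\omega_Z|_X$ the image is $\Delta^{-1}\Or_X\,dx_1\wedge\cdots\wedge dx_d$, which is (3). The determinant expansion is routine; the only point of care is that $\Delta$ is invertible in $\Ka_X$, legitimizing the inversion.
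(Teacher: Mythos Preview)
Your approach matches the paper's on all three parts: Koszul resolution for (1), the wedge-with-$dF$ construction for (2), and the determinant computation $c_Z(dx_1\wedge\cdots\wedge dx_d)=\Delta\,\sigma$ with $\Delta\in\Ka_X^\times$ for (3). The paper is slightly more explicit in (3) about why $(c_Z)_{\Ka_X}$ is an isomorphism (it localizes the conormal sequence at $\Ka_X$, shows it becomes short exact by generic smoothness, and cites \cite[Tag 0FJB]{St}); your computation in effect proves this directly, so the difference is cosmetic.

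The one substantive divergence is in (2). The paper simply constructs the map from the conormal sequence, using that $\bigwedge^{e+1}K=0$ for $K$ the image of $N$ in $\Omega^1_{A/\K}|_Z$ (equivalent to your lift-independence remark), and then \emph{declares} the result to be $c_Z$, implicitly relying on the fundamental local isomorphism of \cite[III, Prop.~7.2]{Ha}. You instead flag the identification with the abstract $c_Z$ as the main obstacle and propose to verify it on the smooth locus. Be careful here: in the stated generality $Z$ is only assumed reduced \emph{along $X$}, so the smooth locus of $Z$ need not be dense (a non-reduced component of $W$ has empty smooth locus). Your ``$\omega_Z$ has no embedded primes'' argument still works if you compare the two maps at all minimal primes of $Z$ rather than at smooth points, but the Poincar\'e-residue interpretation is not available at a non-reduced minimal prime. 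This is a wrinkle rather than a real gap, since everything downstream in the paper only uses the formula after restricting to $X$, where $Z$ is reduced.
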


\begin{proof}
Consider (1). By applying the duality theorem for closed embeddings to $Z\hookrightarrow A$ we
obtain
$\omega_{Z/\K}=\underline{\mathrm{Ext}}^e_{\Or_A}(\Or_Z,\Omega^n_A)$. We can compute this Ext
group via the Koszul complex (see \cite[Ch. III, Proposition 7.2]{Ha}) to obtain
\[\omega_{Z/\K}=\underline\Hom_{\Or_Z}\left(\bigwedge^eN,\Omega^n_{A/\K}|_Z\right)=
\left(\Omega^n_{A/\K}|_Z\right)\otimes_{\Or_Z}\left(\bigwedge^eN\right)^\vee.\]
Consider (2). Recall the conormal sequence
\[\begin{tikzcd}
N\ar[r]&\Omega^1_{A/\K}|_Z\ar[r,"j"]&\Omega^1_{Z/\K}\ar[r]&0.
\end{tikzcd}\]
Write $K:=\mathrm{Ker}(j)$. Since $N$ is free of rank $e$ we have
$\bigwedge^{e+1}K=0$. Thus the morphism
\[\begin{tikzcd}\label{eq:canonical_map}
    \Omega^n_{A/\K}|_Z & \alpha_1\wedge\cdots\wedge\alpha_e\wedge dG_1\wedge\cdots dG_e \\
    \left(\Omega^d_{Z/\K}\right)\otimes\left(\bigwedge^eN\right) \ar[u] &
        (\alpha_1\wedge\cdots\wedge\alpha_e)\otimes(\Bar{G}_1\wedge\cdots\wedge\Bar{G}_e)
        \ar[u,maps to]
\end{tikzcd}\]
is well-defined. Tensoring with $(\bigwedge^eN)^\vee$ and composing with the contraction
$(\wedge^eN)^\vee\otimes(\wedge^eN)\to\Or_Z$, we obtain the desired $c_Z$. This proves (2).

Consider (3). Localize the conormal sequence at $\Ka_X$
\[\begin{tikzcd}
N\otimes\Ka_X \ar[r,"\delta"] &
\Omega^1_{A/\K}\otimes\Ka_X \ar[r] & (\Omega^1_{Z/\K})_{\Ka_X}\ar[r]&0.
\end{tikzcd}\]
Since $Z$ is reduced along $X$, by generic smoothness $\mathrm{Ker}(\delta)$ is
torsion. However, $N|_X$ is free, thus any submodule is torsion free. Hence
$\mathrm{Ker}(\delta)=0$. By generic smoothness again $(\Omega^1_{Z/\K})_{\Ka_X}$ is free of
rank $d$. The conormal sequence is thus short exact. By
\cite[\href{https://stacks.math.columbia.edu/tag/0FJB}{Tag 0FJB}]{St}
the canonical map $(c_Z)_{\Ka_X}:(\Omega^d_{Z/\K})_{\Ka_X}\to(\omega_Z)_{\Ka_X}$ is an
isomorphism. Because $X$ coincides with $Z$ along the generic points of the irreducible
components of $X$, we can identify $(\Omega^d_{Z/\K})_{\Ka_X}$ with $\Omega^d_{\Ka_X}$. Thus
$\omega_Z|_X$ is a submodule of $\Omega^d_{\Ka_X}$.

To find a free generator for $\omega_Z|_X$, we need to find $\alpha=\alpha_1\wedge\cdots
\wedge\alpha_d\in\Omega^d_{\Ka_X}$ such that $(c_Z)_{\Ka_X}(\alpha)=u(dx_1\wedge\cdots
\wedge dx_n)\otimes(\Bar{F}^\vee|_X)$ where $u\in\Or_X^\times$. Clearly, we have
\begin{align*}
(c_Z)_{\Ka_X}(dx_1\wedge\cdots\wedge dx_d)
    &=(dx_1\wedge\cdots\wedge dx_d\wedge dF_1\wedge\cdots\wedge dF_e)\otimes(\Bar{F}^\vee|_X)\\
    &=\Delta(dx_1\wedge\cdots\wedge dx_n)\otimes(\Bar{F}^\vee|_X).
\end{align*}
Thus a basis for $\omega_Z|_X$ is $\Delta^{-1}dx_1\wedge\cdots\wedge dx_d$. This concludes the
proof of (3).
\end{proof}

\begin{rke}\label{rke:affine_structure}
We can illustrate Proposition \ref{prop:structure} with the following diagram
\begin{equation}\label{diag:structure}
\begin{tikzcd}
\omega_X\ar[r,hook,"\Bar{i}"] & \omega_Z|_X \ar[r,hook] & (\omega_Z)_{\Ka_X} & \\
\Omega^d_{X/\K} \ar[u,"c_X"] \ar[rrr,bend right=20,"\lambda"] &
\Omega^d_{Z/\K}|_X \ar[l] \ar[r] \ar[u,"c_Z|_X"] & (\Omega^d_{Z/\K})_{\Ka_X}
    \ar[u,"(c_Z)_{\Ka_X}","\simeq"'] & \Omega^d_{\Ka_X} \ar[l,equal] \\
\end{tikzcd}
\end{equation}
The map $\Bar{i}$ comes from the adjunction-type formula in Proposition \ref{thm:adjunction}. Its
image is thus $\Id_W\omega_Z|_X$. The map $\lambda$ is the localization morphism. For a set
of indices $1\leqslant i_1<\ldots<i_d\leqslant n$, set $D$ to be the minor of the Jacobian of
$(F_1,\ldots,F_e)$ corresponding to the coordinates different from $x_{i_1},\ldots,x_{i_d}$.
Define $\epsilon$ as follows: set $1\leqslant j_1<\ldots<j_e\leqslant n$ the indices different
from $i_1,\ldots,i_d$, then $\epsilon=\mathrm{card}\{(k,l);\,i_k>j_l\}$. Then
\[\lambda(dx_{i_1}\wedge\cdots\wedge dx_{i_d})=
(-1)^\epsilon\frac{D}{\Delta}dx_1\wedge\cdots\wedge dx_d.\]
Indeed,
\begin{align*}
((c_Z)_{\Ka_X}\circ\lambda)(dx_{i_1}\wedge\cdots\wedge dx_{i_d}) &=
(dx_{i_1}\wedge\cdots\wedge dx_{i_d}\wedge dF_1\wedge\cdots\wedge dF_e)
    \otimes(\Bar{F}^\vee|_X)\\
&=(-1)^\epsilon D(dx_1\wedge\cdots\wedge dx_n)\otimes(\Bar{F}^\vee|_X)\\
&=(-1)^\epsilon\frac{D}{\Delta}\Delta(dx_1\wedge\cdots\wedge dx_n)\otimes(\Bar{F}^\vee|_X)\\
&=(-1)^\epsilon\frac{D}{\Delta}(c_Z)_{\Ka_X}(dx_1\wedge\cdots\wedge dx_d).
\end{align*}
Thus
\[\mathrm{Im}(\lambda)=\Delta^{-1}\Jac(Z)\Or_Xdx_1\wedge\cdots\wedge dx_d,\]
where $\Jac(Z)$ is the Jacobian ideal of $Z$.
\end{rke}

\begin{cor}\label{eq:complete_intersection}
Preserve the notations of Proposition \ref{prop:structure} and identify $\omega_X$ with its image in
$\Omega^d_{\Ka_X}$. We have
\[\Delta\omega_X=\Id_W\Or_X dx_1\wedge\cdots\wedge dx_d\]
inside $\Omega^d_{\Ka_X}$
\end{cor}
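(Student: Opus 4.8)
The plan is to read off the statement directly from the two preceding results, since both sides of the claimed equality already live inside $\Omega^d_{\Ka_X}$ and have been computed separately. The only real content is to check that the abstract description of $\omega_X$ coming from the adjunction-type formula matches the concrete generator of $\omega_Z|_X$ exhibited in Proposition \ref{prop:structure}, after clearing the denominator $\Delta$.

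First I would recall, via the commutative diagram (\ref{diag:structure}), that the identification of $\omega_X$ with a submodule of $\Omega^d_{\Ka_X}$ runs through the composite of $\Bar{i}\colon\omega_X\into\omega_Z|_X$ with the inclusion $\omega_Z|_X\into(\omega_Z)_{\Ka_X}=\Omega^d_{\Ka_X}$. By Proposition \ref{thm:adjunction} the map $\Bar{i}$ realizes $\omega_X$ as $i^*(\Id_W\omega_Z)$, so its image inside $\omega_Z|_X$ is exactly $\Id_W\,\omega_Z|_X$, as already recorded in Remark \ref{rke:affine_structure}. Hence, under the identification of the statement, we obtain the equality
\[\omega_X=\Id_W\,\omega_Z|_X\]
of submodules of $\Omega^d_{\Ka_X}$.

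Next I would substitute the explicit generator from Proposition \ref{prop:structure}(3), namely $\omega_Z|_X=\Delta^{-1}\Or_X\,dx_1\wedge\cdots\wedge dx_d$, to obtain
\[\omega_X=\Id_W\Or_X\,\Delta^{-1}\,dx_1\wedge\cdots\wedge dx_d.\]
Since $Z$ is reduced along $X$ we have $\Delta\in\Ka_X^\times$, so multiplication by $\Delta$ is a well-defined automorphism of $\Omega^d_{\Ka_X}$ with $\Delta\cdot\Delta^{-1}=1$. Multiplying the previous identity by $\Delta$ then yields $\Delta\omega_X=\Id_W\Or_X\,dx_1\wedge\cdots\wedge dx_d$, which is the claim.

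The step requiring the most care is the first one: one must make sure that the image of $\omega_X$ under $\Bar{i}$, computed abstractly as $\Id_W\,\omega_Z|_X$, and the identification of $\omega_Z|_X$ with $\Delta^{-1}\Or_X\,dx_1\wedge\cdots\wedge dx_d$ are taken with respect to the \emph{same} embedding into $\Omega^d_{\Ka_X}$, namely the one furnished by the isomorphism $(c_Z)_{\Ka_X}$ appearing in diagram (\ref{diag:structure}). Once this compatibility is in place, the remaining manipulation is purely formal.
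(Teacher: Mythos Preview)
Your proof is correct and follows exactly the same approach as the paper: identify $\omega_X$ inside $\Omega^d_{\Ka_X}$ via the top row of diagram~(\ref{diag:structure}), use the adjunction-type formula to get $\omega_X=\Id_W\,\omega_Z|_X$, substitute the generator $\omega_Z|_X=\Delta^{-1}\Or_X\,dx_1\wedge\cdots\wedge dx_d$ from Proposition~\ref{prop:structure}(3), and clear $\Delta$. Your added remark about checking that both identifications use the same embedding via $(c_Z)_{\Ka_X}$ is a helpful clarification the paper leaves implicit.
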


\textit{Proof.}
By considering the top maps in \eqref{diag:structure} we get
\[\omega_X=\mathrm{Im}(\omega_X\to\Omega^d_{\Ka_X})=
\Id_W\mathrm{Im}(\omega_Z|_X\to\Omega^d_{\Ka_X})=
\Id_W(\Delta^{-1}\Or_Xdx_1\wedge\cdots\wedge dx_d).\qed\]

\section{A multiplicity formula for a curve singularity}\label{sec:Milnor}

Throughout this section $X$ is a reduced equidimensional scheme of finite type over $\K$ of
dimension one. Let $\nu:\overline{X}\to X$ be the normalization morphism. Fix a closed point
$x\in X$.

\subsection{Local ramification invariants}

The number $r_x:=|\nu^{-1}(x)|$ is the \textit{number of branches of $X$ at} $x$.
It is the number of maximal ideals of the integral closure of $\mathcal{O}_{X,x}$ in its total ring of fractions (\cite[\href{https://stacks.math.columbia.edu/tag/0C37}{Tag 0C37}]{St}).
Define the \textit{delta invariant} of $X$ at $x$ as $\delta_x:=
\dim_\K(\nu_*\Or_{\overline{X}})_x/\Or_{X,x}$. Each stalk $\Or_{\overline{X},y},
y\in\overline{X}$ is discrete valuation ring. Call $\ord_y$ its valuations at
$y$. Suppose $\ord_y$ is normalized,\ie $\ord_y(t_y)=1$ for each
$t_y\in\m_{\overline{X},y}\setminus\m_{\overline{X},y}^2$. For an ideal $I$ in
$\Or_{\overline{X},y}$ write $\ord_y(I)=\min\{\ord_y(a);\,a\in I\}$. Finally, observe that
since $\overline{X}$ is regular and $\K$ is perfect, $\overline{X}$ is smooth and the
canonical map $c_{\overline{X}}$, defined in Section \ref{subsec:complementary}, is an isomorphism.

\begin{defn}
\begin{enumerate}
\item We call a meromorphic form $\alpha\in\Omega^1_{\Ka_X}$ \textit{finite} if it forms a
$\Ka_X$ basis of $\Omega^1_{\Ka_X}$. In other words, $\alpha$ is not identically zero on
each irreducible component of $X$.
\item Fix a finite form $\alpha$. Following \cite{MVS} we introduce the 
\textit{ramification modules}
\begin{align*}
    R^+_x(\alpha) &:= \omega_{X,x}/\omega_{X,x}\cap\alpha\Or_{X,x} \\
    R^-_x(\alpha) &:= \alpha\Or_{X,x}/\omega_{X,x}\cap\alpha\Or_{X,x}.
\end{align*}
\item The above modules are of finite length and we call
\[\rho(\alpha):=\dim_\K R^+_x(\alpha)-\dim_\K R^-_x(\alpha),\]
the \textit{ramification index} of $\alpha$ at $x$.
\end{enumerate}
\end{defn}

\begin{rke}
A differential form $\alpha\in\Omega^1_{X/\K}$ defines a $\Ka_X$-basis of $\Omega^d_{\Ka_X}$
if and only if $\alpha$ is torsion-free in $\Omega^1_{X/\K}$. In this case $\alpha\Or_X$ can
be identified with a submodule of $\omega_X$ and $R^-_x(\alpha)=0$. Observe also that $\nu$ is
birational and thus $\nu$ gives an isomorphism $\Ka_{X}\to\nu_*\Ka_{\overline{X}}$ and
$d\nu$ identifies $\Omega^1_{\Ka_X}$ with $\nu_*\Omega^1_{\Ka_{\overline{X}}}$. Therefore, any
finite form $\alpha$ on $X$ can be considered as a finite form on $\overline{X}$.
\end{rke}

Following \cite{Pi} we define the \textit{ramification ideal} $R_X:=\Fitt_0
(\Omega^1_{\overline{X}/X})$, which is the $0$-th Fitting ideal of the module of relative
Kähler differentials of $\overline{X}$ over $X$. It is an ideal in $\Or_{\overline{X}}$.
By analogy with \cite[Theorem 3-7-23, p.\ 114]{Weiss}, we introduce the \textit{differential
multiplicity} as the Hilbert-Samuel multiplicity
$e(R_X)=e((\nu_*R_X)_x)$. Equivalently, it can be defined as
\[e(R_X)=\sum_{y\in\nu^{-1}(x)}\ord_y(R_X\Or_{\overline{X},y}).\]
It can be computed via the \textit{cotangent sequence}
\[\begin{tikzcd}
\nu^*\Omega^1_{X/\K}\ar[r]&\Omega^1_{\overline{X}/\K}\ar[r]&\Omega^1_{\overline{X}/X}\ar[r]&0.
\end{tikzcd}\]
Let $\m_{X,x}$ be the maximal ideal of $\Or_{X,x}$ and let $u_1,\ldots,u_n$ a system of 
generators for $\m_{X,x}$. Locally around $y$ the image of $\nu^*\Omega^1_{X/\K}\to
\Omega^1_{\overline{X}/\K}$ is generated by the images of $du_1,\ldots,du_n$. Since
$\overline{X}$ is regular and $\K$ is perfect, $\overline{X}$ is smooth and
$\Omega^1_{\overline{X},y}$ are free rank one $\Or_{\overline{X},y}$ modules. Each $du_j$ can
be written in $\Omega^1_{\overline{X},y}$ as $v_j\varpi$ where $v_j\in\Or_{\overline{X},y}$
and $\varpi$ is a free generator of $\Omega^1_{\overline{X},y}$. The quantity $\ord_yv_j$ is
thus well defined and
\begin{equation}\label{eq:order_differentials}
\ord_y(R_X\Or_{\overline{X},y})=\min_{1\leqslant j\leqslant n}\ord_yv_j.
\end{equation}
{\bf Convention.} In this work we often consider $\K$-linear combinations $\sum_{i=1}^Na_ir_i$ of elements
$r_i$ from a ring $R$ containing $\K$. By a {\it general} $\sum_{i=1}^Na_ir_i$ or by a \textit{general linear combination} we
mean that the $a_i$ belong to some non-empty Zariski open subset of
$\A^N_\K$. We also call general any object that depends on such general linear combinations.

\begin{prop}\label{prop:rho_to_ramification}
Suppose $g=\sum_{i=1}^na_iu_i, a_i\in\K$ is general. Then
\[\rho(dg)=2\delta_x+e(R_X).\]
\end{prop}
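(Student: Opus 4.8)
The plan is to reduce $\rho(dg)$ to a single colength and then evaluate that colength by telescoping along a chain of $\Or_{X,x}$-submodules of $\Omega^1_{\Ka_X}$. Throughout write $\overline{\Or}:=(\nu_*\Or_{\overline X})_x=\prod_{y\in\nu^{-1}(x)}\Or_{\overline X,y}$ and $\Bar{\omega}:=(\nu_*\omega_{\overline X})_x$; since $\overline X$ is smooth, $\Bar{\omega}=\prod_y\Omega^1_{\Or_{\overline X,y}}$, and $\omega_{X,x}$, $\Bar{\omega}$, $\overline{\Or}$ all sit inside $\Omega^1_{\Ka_X}=\nu_*\Omega^1_{\Ka_{\overline X}}$. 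First I would record that for general $g$ the form $dg$ is finite. Indeed $\nu$ is birational, so $\Omega^1_{\overline X/X}$ is torsion and $R_X$ is nonzero on every branch; by \eqref{eq:order_differentials} this says that on each $y$ not all of $du_1,\dots,du_n$ vanish, so for general $(a_i)$ the form $dg=\sum_i a_i\,du_i$ is a $\Ka_X$-basis of $\Omega^1_{\Ka_X}$. By the Remark following the definition of the ramification modules, $dg\,\Or_{X,x}$ then embeds into $\omega_{X,x}$, whence $R^-_x(dg)=0$ and
\[
\rho(dg)=\dim_\K \omega_{X,x}/dg\,\Or_{X,x}.
\]

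Next I would set up the chain $dg\,\Or_{X,x}\subseteq dg\,\overline{\Or}\subseteq\Bar{\omega}\subseteq\omega_{X,x}$ (the middle inclusion holds because $dg\in\Bar{\omega}$ and $\Bar{\omega}$ is an $\overline{\Or}$-module) and compute the three successive colengths. For $\Bar{\omega}\subseteq\omega_{X,x}$, apply $\Hom_{\Or_{X,x}}(-,\omega_{X,x})$ to $0\to\Or_{X,x}\to\overline{\Or}\to\overline{\Or}/\Or_{X,x}\to 0$: since $\overline{\Or}$ is a maximal Cohen--Macaulay $\Or_{X,x}$-module and $\overline{\Or}/\Or_{X,x}$ has finite length $\delta_x$, this collapses to the exact sequence $0\to\Hom_{\Or_{X,x}}(\overline{\Or},\omega_{X,x})\to\omega_{X,x}\to\Ext^1_{\Or_{X,x}}(\overline{\Or}/\Or_{X,x},\omega_{X,x})\to 0$, and local duality identifies the last term as the Matlis dual of $\overline{\Or}/\Or_{X,x}$, of length $\delta_x$. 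The duality theorem for finite morphisms (fact (5) of \cref{subsec:complementary}) identifies $\Hom_{\Or_{X,x}}(\overline{\Or},\omega_{X,x})$ with $\Bar{\omega}$, so $\dim_\K\omega_{X,x}/\Bar{\omega}=\delta_x$. For $dg\,\overline{\Or}\subseteq\Bar{\omega}$, working branch by branch and writing $dg=w\,\varpi$ with $\varpi$ a generator of $\Omega^1_{\Or_{\overline X,y}}$ gives $\dim_\K\Bar{\omega}/dg\,\overline{\Or}=\sum_y\ord_y(dg)$. For general $(a_i)$ one has $\ord_y(dg)=\min_i\ord_y(du_i)$ simultaneously at every (of the finitely many) $y$, and by \eqref{eq:order_differentials} this common value equals $\ord_y(R_X\Or_{\overline X,y})$, so $\dim_\K\Bar{\omega}/dg\,\overline{\Or}=e(R_X)$. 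Finally, since $dg$ is a $\Ka_X$-basis of $\Omega^1_{\Ka_X}$, multiplication by $dg$ is injective and $\dim_\K dg\,\overline{\Or}/dg\,\Or_{X,x}=\dim_\K\overline{\Or}/\Or_{X,x}=\delta_x$.

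Adding the three colengths along the chain yields
\[
\rho(dg)=\dim_\K\omega_{X,x}/\Bar{\omega}+\dim_\K\Bar{\omega}/dg\,\overline{\Or}+\dim_\K dg\,\overline{\Or}/dg\,\Or_{X,x}=\delta_x+e(R_X)+\delta_x,
\]
which is the claimed equality $\rho(dg)=2\delta_x+e(R_X)$. The genericity used twice above is harmless: each is a nonempty Zariski-open condition on $(a_i)\in\A^n_\K$, one per branch, and one intersects the finitely many of them.

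I expect the main obstacle to be the identification used in the first colength computation: Grothendieck duality provides an \emph{abstract} isomorphism $\Hom_{\Or_{X,x}}(\overline{\Or},\omega_{X,x})\cong\Bar{\omega}$, whereas the telescoping sum requires this to be the genuine inclusion of submodules of $\Omega^1_{\Ka_X}$. The clean way to settle this is to tensor with $\Ka_X$: both sides become $\Omega^1_{\Ka_X}$ (as $\overline{\Or}\otimes_{\Or_{X,x}}\Ka_X=\Ka_X$) and the duality isomorphism localizes to the canonical isomorphism, so it is compatible with the embeddings and the homologically computed colength $\delta_x$ is indeed the colength of $\Bar{\omega}$ in $\omega_{X,x}$. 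Everything else is the elementary length bookkeeping sketched above.
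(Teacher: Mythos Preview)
Your proof is correct and follows essentially the same route as the paper: the paper invokes \cite[Lemma~1.6]{MVS} as a black box to obtain $\rho(\alpha)=2\delta_x+\dim_\K(\nu_*\Omega^1_{\overline X})_x/\alpha(\nu_*\Or_{\overline X})_x$ and then computes the second term as $e(R_X)$ for general $g$, whereas your telescoping chain $dg\,\Or_{X,x}\subseteq dg\,\overline{\Or}\subseteq\Bar{\omega}\subseteq\omega_{X,x}$ reproves that lemma inline (the two outer colengths give the $2\delta_x$) and your middle colength computation coincides verbatim with the paper's. Your version is therefore more self-contained, and the care you take over compatibility of the duality isomorphism $\Hom_{\Or_{X,x}}(\overline{\Or},\omega_{X,x})\cong\Bar{\omega}$ with the concrete embeddings in $\Omega^1_{\Ka_X}$ is well placed; the paper simply defers that point to \cite{MVS}.
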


\begin{proof}
For any finite form $\alpha$ we can compute $\rho(\alpha)$ by pulling it back to
$\overline{X}$. By \cite[Lemma 1.6]{MVS} we get
\[\rho(\alpha)=2\delta_x+\rho(d\nu(\alpha)),\]
where
\[\rho(d\nu(\alpha))=\dim_\K(\nu_*\Omega^1_{\overline{X}})_x/
\alpha(\nu_*\Or_{\overline{X}})_x.\]
However, we have
\[(\nu_*\Omega^1_{\overline{X}})_x=\prod_{y\in\nu^{-1}(x)}\Omega^1_{\overline{X},y}
\quad\text{and}\quad
(\nu_*\Or_{\overline{X}})_x=\prod_{y\in\nu^{-1}(x)}\Or_{\overline{X},y}.\]
Thus
\[\rho(d\nu(\alpha))=
\sum_{y\in\nu^{-1}(x)}\dim_\K\Omega^1_{\overline{X},y}/\alpha\Or_{\overline{X},y}.\]

Consider $\alpha=dg=\sum_ia_idu_i$. By \eqref{eq:order_differentials}, for a general choice
of coefficients $a_i$ we get that for each $y\in\nu^{-1}(x)$
\[\dim_\K\Omega^1_{\overline{X},y}/\alpha\Or_{\overline{X},y}=
\ord_y(R_X\Or_{\overline{X},y}).\]
Hence, for general $g$
\[\rho(d\nu(\alpha))=e(R_X),\]
which concludes the proof.
\end{proof}

\subsection{Tame ramification}
Next we define the Milnor number of $X$ at $x$. Denote by $d:\Or_{X,x}\to\omega_{X,x}$ the
composition of the universal differential $\Or_{X,x}\to(\Omega^1_{X/\K})_x$ and the canonical
map $(\Omega^1_{X/\K})_x\to\omega_{X,x}$.

\begin{defn}
Set
\[\mu_x:=\dim_\C(\omega_{X,x}/d\Or_{X,x})\]
and call $\mu_x$ the \textit{Milnor number of} $X$ at $x$.
\end{defn}

It is proved in \cite[Proposition. 1.2.1.]{BG} that $\mu_x$ is equal to $2\delta_x-r_x+1$.
The proof there is for complex analytic germs, however, the arguments work over any
algebraically closed field $\K$.

\begin{defn}
Denote by $m_x$ be the multiplicity of $X$ at $x$. It is the Hilbert--Samuel multiplicity of
the maximal ideal $\m_{X,x}$ in $\Or_{X,x}$. We say that the point $x$ is a \textit{tame} if
$\mathrm{char}(\K)=0$ or if $\mathrm{char}(\K)=p>0$ and
$p\nmid\ord_y(\m_{X,x}\Or_{\overline{X},y})$ for each $y\in\nu^{-1}(x)$.
\end{defn}

\begin{prop}\label{prop:tame_ramification}
Suppose $x$ is a tame point. Then $e(R_X)=m_x-r_x$. Furthermore, for general $g$ we have
\[\rho(dg)=\mu_x+m_x-1.\]
\end{prop}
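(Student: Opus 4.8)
The plan is to reduce everything to the single identity $e(R_X)=m_x-r_x$, since the displayed formula for $\rho(dg)$ then follows formally. Indeed, Proposition \ref{prop:rho_to_ramification} gives $\rho(dg)=2\delta_x+e(R_X)$ for general $g$, and substituting $e(R_X)=m_x-r_x$ together with the Buchweitz--Greuel identity $\mu_x=2\delta_x-r_x+1$ yields
\[
\rho(dg)=2\delta_x+m_x-r_x=(2\delta_x-r_x+1)+m_x-1=\mu_x+m_x-1.
\]
So all the content lies in computing $e(R_X)$ under the tameness hypothesis.

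To do this I would work one branch at a time, using the branchwise formula $e(R_X)=\sum_{y\in\nu^{-1}(x)}\ord_y(R_X\Or_{\overline{X},y})$ together with \eqref{eq:order_differentials}. Fix $y\in\nu^{-1}(x)$ and a uniformizer $t=t_y$, so that $\varpi=dt$ freely generates $\Omega^1_{\overline{X},y}$ and $du_j=v_j\varpi$ with $v_j=du_j/dt$; thus \eqref{eq:order_differentials} reads $\ord_y(R_X\Or_{\overline{X},y})=\min_j\ord_y(du_j/dt)$. On the multiplicity side I would invoke the classical description of the Hilbert--Samuel multiplicity of a reduced curve singularity through its normalization, $m_x=\sum_{y\in\nu^{-1}(x)}\ord_y(\m_{X,x}\Or_{\overline{X},y})$. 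This can be cited, or proved in a line: for a general $f\in\m_{X,x}$ (a minimal reduction) one has $e(\m_{X,x})=\dim_\K\Or_{X,x}/f\Or_{X,x}$, and by additivity of length along the finite birational $\nu$ this colength equals $\sum_y\ord_y(f)$, with $\ord_y(f)=\ord_y(\m_{X,x}\Or_{\overline{X},y})$ for general $f$. Setting $a_y:=\ord_y(\m_{X,x}\Or_{\overline{X},y})=\min_j\ord_y u_j$, it suffices to establish the branchwise equality $\ord_y(R_X\Or_{\overline{X},y})=a_y-1$.

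The heart of the argument, and the place where tameness is indispensable, is this last equality. Writing $u_j=\sum_k c_{j,k}t^k$ in $\Or_{\overline{X},y}$ and differentiating, $du_j/dt=\sum_k k\,c_{j,k}t^{k-1}$, so the leading term of $u_j$ survives differentiation exactly when $p\nmid\ord_y u_j$. In general one only gets the lower bound $\ord_y(du_j/dt)\geqslant\ord_y u_j-1\geqslant a_y-1$ for every $j$, because when $p\mid\ord_y u_j$ the leading coefficient is killed. Tameness of $x$ says precisely $p\nmid a_y$; choosing an index $j$ with $\ord_y u_j=a_y$ (such exists, as $a_y$ is the minimum), the leading coefficient $a_y c_{j,a_y}$ is nonzero and hence $\ord_y(du_j/dt)=a_y-1$. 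Combining with the lower bound gives $\min_j\ord_y(du_j/dt)=a_y-1$, i.e.\ $\ord_y(R_X\Or_{\overline{X},y})=a_y-1$. Summing over the $r_x$ points of $\nu^{-1}(x)$ produces $e(R_X)=\sum_y(a_y-1)=m_x-r_x$, which finishes the proof.

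I expect the main obstacle to be the positive-characteristic bookkeeping at this final step: one must simultaneously show that differentiation never drops the order below $\ord_y u_j-1$ (so that $a_y-1$ is a genuine lower bound for the minimum) and exhibit one generator attaining $a_y-1$, and it is exactly the hypothesis $p\nmid a_y$ that guarantees the latter. A secondary point needing care is the multiplicity formula $m_x=\sum_y\ord_y(\m_{X,x}\Or_{\overline{X},y})$, for which I would either cite a standard reference or give the short reduction-element argument sketched above.
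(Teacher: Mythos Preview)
Your proposal is correct and follows essentially the same approach as the paper: both reduce to the branchwise identity $\ord_y(R_X\Or_{\overline{X},y})=a_y-1$ via formal differentiation of power-series expansions, use tameness to ensure the leading coefficient survives, sum over $\nu^{-1}(x)$ using $m_x=\sum_y a_y$, and then combine with Proposition~\ref{prop:rho_to_ramification} and the Buchweitz--Greuel formula. Your treatment of the key step (separating the lower bound $\ord_y(du_j/dt)\geqslant a_y-1$ from the exhibition of a generator attaining it) is in fact more explicit than the paper's one-line ``since $x$ is tame''; the only point the paper makes more visible is the passage to completions via the Cohen structure theorem before writing the $u_j$ as power series in $t_y$, which you use implicitly.
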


\begin{proof}
Consider the induced morphism $\widehat{\Or}_{X,x}\to\widehat{\Or}_{\overline{X},y}$ between
complete local rings. Set
\[\widehat{\Omega}^1_{X,x}:=\Omega^1_{X/\K}\otimes_{\Or_{X}}\widehat{\Or}_{X,x}
\quad\text{and}\quad\widehat{\Omega}^1_{\overline{X},y}:=
\Omega^1_{\overline{X}/\K}\otimes_{\Or_{\overline{X}}}\widehat{\Or}_{\overline{X},y}.\]
Since the modules of Kähler differentials are finitely generated, by
\cite[\href{https://stacks.math.columbia.edu/tag/00MA}{Tag 00MA}]{St} we have
\[\widehat{\Omega}^1_{X,x}=\varprojlim_{n\in\N}\Omega^1_{X,x}/\m_{X,x}^n\Omega^1_{X,x}
\quad\text{and}\quad\widehat{\Omega}^1_{\overline{X},y}=
\varprojlim_{n\in\N}\Omega^1_{\overline{X},y}/\m_{\overline{X},y}^n\Omega^1_{\overline{X},y}.\]
Set $t_y$ a uniformizer for $\Or_{\overline{X},y}$. By the Cohen structure theorem, every
$\gamma\in\widehat{\Or}_{\overline{X},y}$ can be expanded as a formal power series in $t_y$
\[\gamma=\sum_{k\in\N}\gamma_kt_y^k,\:\gamma_k\in\K.\]
We continue to write $\ord_y$ for the order of vanishing along $\varpi_y$. Write $\gamma'$ for
the formal derivative of $\gamma$:
\[\gamma':=\sum_{k\in\N}k\gamma_kt_y^{k-1}.\]
The induced differential morphism $\widehat{\Omega}^1_{X,x}\to
\widehat{\Omega}^1_{\overline{X},y}$
takes any $du$ with $u\in\widehat{\Or}_{X,x}$ and sends it to $\gamma'dt_y$, where $\gamma$
is the expansion of $u$ inside $\widehat{\Or}_{\overline{X},y}$. Let $u_1,\ldots,u_n$ be
generators of $\m_{X,x}$ and write $\gamma_1,\ldots,\gamma_n$ for their expansion in
$\widehat{\Or}_{\overline{X},y}$. Then $\gamma'_1,\ldots,\gamma'_n$ generate
$R_X\widehat{\Or}_{\overline{X},y}$ and so
\[\ord_yR_X\Or_{\overline{X},y}=\ord_yR_X\widehat{\Or}_{\overline{X},y}=
\min_{1\leqslant i\leqslant n}\ord_y\gamma'_i.\]
Since $x$ is tame, we have
\[\min_{1\leqslant i\leqslant n}\ord_yu_i=\min_{1\leqslant i\leqslant n}\ord_y\gamma_i=
1+\min_{1\leqslant i\leqslant n}\ord_y\gamma'_i.\]
By additivity of the multiplicity we have
\begin{align*}
m_x &=\sum_{y\in\nu^{-1}(x)}\min_{1\leqslant i\leqslant n}\ord_yu_i \\
&=\sum_{y\in\nu^{-1}(x)}1+\min_{1\leqslant i\leqslant n}\ord_y\gamma'_i \\
&=r_x+\sum_{y\in\nu^{-1}(x)}\ord_y(R_X\Or_{\overline{X},y})\\
&=r_x+e(R_X).
\end{align*}
Thus we have $e(R_X)=m_x-r_x$. Combining this identity and Proposition \ref{prop:rho_to_ramification}
we get $\rho(dg)=2\delta_x+e(R_X)=(2\delta_x-r_x+1)+m_x-1=\mu_x+m_x-1$. 
\end{proof}

\begin{ex}
If $x$ is not tame, then the conclusion of Proposition \ref{prop:tame_ramification} may fail.
Consider a prime $p$ and $X$ the plane curve parametrized by $t\mapsto(t^p,t^{p+1})$.
Set the origin $x=0$ as our fixed point. If $\K$ is of characteristic different from $p$,
then $e(R_X)=p-1$, but for $\K=\overline{\mathbb{F}}_p$ we have $e(R_X)=p$.
In both cases $m_x=p$.
\end{ex}

\subsection{Multiplicity formulas}
Because we study local invariants of $X$ at $x$ we can replace $X$ by an affine neighborhood
$U$ of $x$. But $X$ is of finite type over $\K$, so  can embed $U$ into some affine $n$-space
$A=\A^n_\K$. The open set $\nu^{-1}(U)\subset\overline{X}$ is also affine and the induced
morphism $\nu^{-1}(U)\to U$ is the normalization morphism.
We can thus assume that $X$ and $\overline{X}$ are affine. Consider a complete intersection
$Z\subset A$ curve that contains $X$ and that is reduced along $X$ (recall that $X$ is  reduced by assumption).
Such $Z$ always exists
(see Section \ref{construction}). Write $W=\overline{Z\setminus X}$. Its ideal in $\Or_Z$ is
given by $\Id_W=(0:_{\Or_Z}\Id_X)$. We can further adopt the set-up of
Section \ref{sec:affine_structure} and suppose that $Z$ is given by equations $F_1,\ldots,F_{n-1}$
and that the rightmost minor of the Jacobian
\[\Delta=\det\left(\frac{\partial F_i}{\partial x_j}
\right)_{\substack{1\leqslant i\leqslant n-1\\ 2\leqslant j\leqslant n}}\]
does not vanish on each irreducible component of $X$.

In this subsection we connect the ramification invariants defined in the previous section
with two multiplicities: the Hilbert--Samuel multiplicity of the Jacobian ideal of $Z$ and the intersection multiplicity $I_x(X,W)$.

\begin{lm}\label{lm:jacobian_minor}
With the above assumptions, we have
\[e(\Jac(Z,x))=\dim_\K\Or_{X,x}/\Delta\Or_{X,x}.\]
\end{lm}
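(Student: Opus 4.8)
The plan is to use that $\Or_{X,x}$ is a one-dimensional reduced — hence Cohen–Macaulay — local ring with algebraically closed (in particular infinite) residue field, and to reduce the computation of the multiplicity of $\Jac(Z)$ to that of the principal ideal $(\Delta)$. Two standard facts organize the argument. First, since $X$ is reduced of dimension one, $\Or_{X,x}$ is Cohen–Macaulay and $\Delta\in\Ka_X^\times$ is a nonzerodivisor in $\m_{X,x}$; thus $(\Delta)$ is a parameter ideal and $e((\Delta))=\ell_{\Or_{X,x}}(\Or_{X,x}/\Delta\Or_{X,x})=\dim_\K\Or_{X,x}/\Delta\Or_{X,x}$, the usual equality of the Hilbert–Samuel multiplicity of a principal ideal with its colength. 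Second, $\Jac(Z)\Or_{X,x}$ is $\m$-primary (it contains the nonzerodivisor $\Delta$ and $Z$ is generically smooth along $X$), so by reduction theory it suffices to prove that $(\Delta)$ is a reduction of $\Jac(Z)\Or_{X,x}$: then $e(\Jac(Z,x))=e((\Delta))$ and the lemma follows.

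To establish the reduction I would argue valuatively on branches. The inclusion $(\Delta)\subseteq\Jac(Z)$ is clear since $\Delta$ is one of the maximal minors $M_1,\dots,M_n$ of the $(n-1)\times n$ Jacobian matrix of $(F_1,\dots,F_{n-1})$, namely $\Delta=M_1$, the minor omitting the $x_1$-column. In a one-dimensional reduced ring $(\Delta)$ is a reduction of $\Jac(Z)$ exactly when $\ord_y(\Delta)=\ord_y(\Jac(Z)\Or_{\overline X,y})$ for every branch $y\in\nu^{-1}(x)$. Restricting to such a branch with uniformizer $t=t_y$, the identity $F_i(x(t))\equiv 0$ differentiates to $\sum_j(\partial F_i/\partial x_j)\,x_j'(t)=0$, so the tangent vector $(x_1'(t),\dots,x_n'(t))$ lies in the kernel of the Jacobian. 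As $Z$ is generically smooth along $X$ this kernel is one-dimensional and is also spanned by the signed cofactor vector $((-1)^j M_j)_j$ (Cramer's identity); since both vectors are nonzero they are proportional, giving $M_j=\pm\lambda\, x_j'(t)$ for a single meromorphic factor $\lambda$. Hence $\ord_y(M_j)=\ord_y(x_j')-\ord_y(\lambda)$ and $\ord_y(\Jac(Z)\Or_{\overline X,y})=\min_j\ord_y(x_j')-\ord_y(\lambda)$, so that $\ord_y(\Delta)=\ord_y(\Jac(Z)\Or_{\overline X,y})$ holds precisely when $\ord_y(x_1')=\min_j\ord_y(x_j')$.

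This last equality is exactly where the general linear change of variables of Section \ref{sec:affine_structure} is used: after it, $x_1$ is a general linear form, so on each of the finitely many branches $x_1'(t)$ attains the minimal vanishing order, which coincides with $\min_k\ord_y(x_k')$ taken over the new coordinates. Therefore $\ord_y(\Delta)=\ord_y(\Jac(Z)\Or_{\overline X,y})$ for all $y$, $(\Delta)$ is a reduction of $\Jac(Z)\Or_{X,x}$, and combining this with the two facts above yields $e(\Jac(Z,x))=e((\Delta))=\dim_\K\Or_{X,x}/\Delta\Or_{X,x}$. The main obstacle is the branch analysis: identifying the kernel of the Jacobian with the span of the signed maximal minors and, crucially, verifying that the generality already invoked to guarantee $\Delta\in\Ka_X^\times$ in fact forces $x_1'$ to attain the minimal order simultaneously on \emph{all} branches — so that the distinguished minor $\Delta$, rather than merely some abstract general element of $\Jac(Z)$, generates a minimal reduction.
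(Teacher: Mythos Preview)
Your argument is correct and follows a genuinely different path from the paper's. The paper works at the level of modules: it observes that, after a general linear change of coordinates, the submodule $J(Z)'\subset\Or_{X,x}^{n-1}$ spanned by the last $n-1$ columns of the Jacobian matrix is a reduction of the full column module $J(Z)$, and then invokes a result of Gaffney \cite[Corollary~1.8]{Gaff} (via \cite[Theorem~16.3.1]{SH}) to the effect that passing to zeroth Fitting ideals preserves the reduction relation, so that $(\Delta)=\Fitt_0(\Or_{X,x}^{n-1}/J(Z)')$ is a reduction of $\Jac(Z,x)=\Fitt_0(\Or_{X,x}^{n-1}/J(Z))$. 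You bypass this module-theoretic machinery with an explicit branch computation: the cofactor identity gives $M_j\sim x_j'(t)$ up to a common meromorphic factor, reducing the question to the order comparison $\ord_y(x_1')=\min_j\ord_y(x_j')$, which is precisely what the general choice of $x_1$ delivers. Your route is more elementary and self-contained and makes the role of genericity completely transparent; the paper's is shorter once the cited machinery is granted. The concern you flag at the end---that $\Delta\in\Ka_X^\times$ alone might not force the needed order equality on every branch---is legitimate, but both proofs handle it the same way: the paper likewise inserts ``Up to a general linear change of coordinates, this is always possible'' inside its proof, so neither argument claims the weaker hypothesis suffices on its own.
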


\begin{proof}
Suppose $\mathcal{M} \subset \mathcal{N}$ are finitely generated $\mathcal{O}_{X,x}$-modules. Denote by $\mathcal{R}(\mathcal{M})$ and $\mathcal{R}(\mathcal{N})$ the Rees algebras of $\mathcal{M}$ and $\mathcal{N}$, respectively. We say that $\mathcal{M}$ is a reduction of $\mathcal{N}$ if $\mathcal{R}(\mathcal{N})$ is an integral extension of $\mathcal{R}(\mathcal{M})$ (see \cite[Sections 16.1 and 16.2]{SH}).
 
Denote by $J(Z)$ the $\Or_{X,x}$-submodule of
$\Or_{X,x}^{n-1}$ generated by the column vectors of the restriction of Jacobian matrix of $Z$ to $X$.
We can further assume that the module $J(Z)'$ generated by the $n-1$ columns of the
Jacobian matrix of $Z$ corresponding to the partials with respect to $x_2,\ldots,x_n$ forms a reduction of  $J(Z)$. Up to a general linear change of coordinates, this is always possible.

By \cite[Theorem 16.3.1]{SH} and \cite[Corollary 1.8]{Gaff} the ideal
$\Fitt_0(\Or_{X,x}^{n-1}/J(Z)')$ is a reduction of $\Jac(Z,x)=\Fitt_0(\Or_{X,x}^{n-1}/J(Z))$.
By the Rees criterion for the Hilbert-Samuel multiplicity (see \cite[Proposition 11.2.1
and Theorem 11.3.1]{SH}) we have
\[e(\Jac(Z,x))=e(\Fitt_0(\Or_{X,x}^{n-1}/J(Z)')).\]
The ideal $\Fitt_0(\Or_{X,x}^{n-1}/J(Z)')=\Delta\Or_{X,x}$ is principal and $(X,x)$ is reduced. Thus
\[e(\Fitt_0(\Or_{X,x}^{n-1}/J(Z)'))=
\dim_\K\Or_{X,x}/\Delta\Or_{X,x}\]
concluding the proof.
\end{proof}


\begin{thm}\label{thm:rho_to_multiplicity}
Suppose $Z$ is reduced along $X$.  Then 
\begin{equation}\label{key local}
e(\Jac(Z,x))-I_x(X,W)=2\delta_x+e(R_X).
\end{equation}
If $Z$ is general, then $e(\Jac(X,x))=e(\Jac(Z,x))$. In this case, the
intersection multiplicity $I_x(X,W)$ is an intrinsic invariant, which  we
denote by $\cid(X,x)$. Thus \eqref{key local} yields
\begin{equation}\label{eq:mult_to_ramif}
e(\Jac(X,x))-\cid(X,x)=2\delta_x+e(R_X).
\end{equation}
Furthermore, if $x$ is a tame point, then
\begin{equation}\label{eq:mult_to_Milnor}
e(\Jac(X,x))-\cid(X,x)=\mu_x+m_x-1.
\end{equation}
\end{thm}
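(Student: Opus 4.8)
The plan is to identify the difference $e(\Jac(Z,x)) - I_x(X,W)$ with the ramification index $\rho(dx_1)$ of a general linear form, and then read off the right-hand side from Proposition \ref{prop:rho_to_ramification} (and from Proposition \ref{prop:tame_ramification} in the tame case). Throughout I work in the affine set-up of Section \ref{sec:affine_structure}: $X \subset \A^n_\K$ is reduced, $Z = \mathbb{V}(F_1, \ldots, F_{n-1})$ is reduced along $X$, and the minor $\Delta$ built from the partials $\partial F_i/\partial x_j$ with $2 \le j \le n$ is a unit in $\Ka_X$. After a general linear change of coordinates I may assume simultaneously that $\Delta \in \Ka_X^\times$ and that $x_1$, regarded as an element of $\m_{X,x}$, is a general linear combination of a generating set, since both are Zariski-open conditions on the change of coordinates. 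In particular $dx_1$ is a finite form, so Proposition \ref{prop:rho_to_ramification} applies with $g = x_1$ and gives $\rho(dx_1) = 2\delta_x + e(R_X)$.

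The main step is a lattice computation inside the rank-one $\Ka_X$-module $\Omega^1_{\Ka_X}$. For two full-rank $\Or_{X,x}$-lattices $M, N$ I would use the relative-length pairing
$$\chi(M,N) := \dim_\K(M/(M\cap N)) - \dim_\K(N/(M\cap N)),$$
which is antisymmetric, additive in the sense $\chi(M,P) = \chi(M,N) + \chi(N,P)$, invariant under multiplication by elements of $\Ka_X^\times$, and which reduces to $\dim_\K(M/N)$ when $N \subseteq M$. Unwinding the ramification modules gives $\rho(dx_1) = \chi(\omega_X, dx_1\Or_X)$. Writing $L := \Or_X\, dx_1$, Corollary \ref{eq:complete_intersection} in dimension one reads $\Delta\omega_X = \Id_W\Or_X\, dx_1 = \Id_W L$, i.e.\ $\omega_X = \Delta^{-1}\Id_W L$. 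Since $\Delta \in \Or_X$ and $\Id_W \subseteq \Or_X$, both $\omega_X$ and $L$ contain $\Id_W L$, so additivity yields
$$\rho(dx_1) = \chi(\Delta^{-1}\Id_W L, \Id_W L) + \chi(\Id_W L, L).$$
The key observation is that $\chi(\Delta^{-1}N, N)$ is independent of the lattice $N$: comparing $N$ and $N'$ with a common sublattice and using the $\Ka_X^\times$-invariance collapses it to the free case $N = L$, whence the first term equals $\dim_\K(\Or_X/\Delta\Or_X) = e(\Jac(Z,x))$ by Lemma \ref{lm:jacobian_minor}. The second term is $-\dim_\K(L/\Id_W L) = -\dim_\K(\Or_X/\Id_W\Or_X) = -I_x(X,W)$. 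Hence $\rho(dx_1) = e(\Jac(Z,x)) - I_x(X,W)$, and comparison with Proposition \ref{prop:rho_to_ramification} yields \eqref{key local}.

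The remaining assertions are short. For general $Z$ the equality $e(\Jac(X,x)) = e(\Jac(Z,x))$ follows from the genericity of $Z$ via a reduction argument in the spirit of Lemma \ref{lm:jacobian_minor} (the image of $\Jac(Z)$ in $\Or_{X,x}$ being a reduction of $\Jac(X,x)$, whence equal Hilbert--Samuel multiplicity by the Rees criterion). Granting this, the right-hand side of \eqref{key local} together with $e(\Jac(X,x))$ are all intrinsic to $(X,x)$, so $I_x(X,W) = e(\Jac(X,x)) - 2\delta_x - e(R_X)$ is intrinsic as well; this justifies denoting it $\cid(X,x)$ and gives \eqref{eq:mult_to_ramif}. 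Finally, for a tame point Proposition \ref{prop:tame_ramification} supplies $e(R_X) = m_x - r_x$, and combined with $\mu_x = 2\delta_x - r_x + 1$ this rewrites $2\delta_x + e(R_X)$ as $\mu_x + m_x - 1$, giving \eqref{eq:mult_to_Milnor}.

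I expect the main obstacle to be the lattice computation, specifically the lattice-independence of $\chi(\Delta^{-1}N, N)$: the module $\Id_W L$ need not be free (it is only torsion-free of rank one), so $\dim_\K(\Delta^{-1}\Id_W L/\Id_W L)$ cannot be read off directly, and the $\chi$-formalism is precisely what reduces every length to the free model $L$. A second delicate point is the genericity, which enters twice and must be coordinated: a single general change of coordinates has to make $\Delta$ a unit in $\Ka_X$ \emph{and} make $x_1$ general enough for Proposition \ref{prop:rho_to_ramification}, while the separate genericity of $Z$ is what drives the equality $e(\Jac(X,x)) = e(\Jac(Z,x))$ and hence the intrinsicness of $\cid(X,x)$.
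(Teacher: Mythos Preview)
Your proof is correct and follows the same route as the paper: identify $e(\Jac(Z,x))-I_x(X,W)$ with $\rho(dx_1)$ via Corollary~\ref{eq:complete_intersection} and Lemma~\ref{lm:jacobian_minor}, then invoke Propositions~\ref{prop:rho_to_ramification} and~\ref{prop:tame_ramification}.

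The only difference is packaging. You introduce the relative-length pairing $\chi(M,N)$ and then need the lattice-independence of $\chi(\Delta^{-1}N,N)$ because you worry that $\Id_W L$ is not free. The paper sidesteps this entirely by observing that $\Or_{X,x}\,dx_1\subset\omega_{X,x}$ (the form $dx_1$ lands in $\omega_{X,x}$ via the canonical map $c_X$, so $R^-_x(dx_1)=0$). This gives the clean chain of inclusions
\[
\Delta\,\Or_{X,x}\,dx_1\;\subset\;\Delta\,\omega_{X,x}\;=\;\Id_W\Or_{X,x}\,dx_1\;\subset\;\Or_{X,x}\,dx_1,
\]
and all three lengths are read off directly: the total is $e(\Jac(Z,x))$ by Lemma~\ref{lm:jacobian_minor}, the top piece is $I_x(X,W)$ since $dx_1$ is a $\Ka_X$-basis, and the bottom piece is $\rho(dx_1)$ since multiplication by $\Delta\in\Ka_X^\times$ preserves lengths. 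Your $\chi$-formalism buys generality you don't need here, at the cost of the extra lattice-independence step; the paper's version is shorter because it exploits the containment $\Or_{X,x}\,dx_1\subset\omega_{X,x}$ from the outset. For the equality $e(\Jac(X,x))=e(\Jac(Z,x))$ when $Z$ is general, the paper cites \cite[Corollary~2.4]{BGR} rather than reproving it; your reduction-argument sketch points in the right direction but would need that external input to be complete.
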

\begin{proof}
First, we will show that 
\begin{equation}\label{ram-mult}
\rho(dg)=e(\Jac(Z,x))-I_x(X,W).
\end{equation}
where $g\in\Or_{X,x}$ is a general $\K$-linear combination of $x_1,\ldots,x_n\in\m_{X,x}$. Up to a linear change of coordinates, we can assume that $g=x_1$.
By Corollary \ref{eq:complete_intersection}, we have the following inclusions
\[\Delta\Or_{X,x}dx_1\subset\Delta\omega_{X,x}=\Id_W\Or_{X,x}dx_1\subset\Or_{X,x}dx_1.\]
By the additivity of length we obtain
\begin{equation}\label{eq:additivity}
\dim_\K\Or_{X,x}dx_1/\Delta\Or_{X,x}dx_1=\dim_\K\Or_{X,x}dx_1/\Id_W\Or_{X,x}dx_1+
\dim_\K\Delta\omega_{X,x}/\Delta\Or_{X,x}dx_1.
\end{equation}
Since $dx_1$ is a $\Ka_{X}$ basis of $\Omega^1_{\Ka_X}$ we have
\begin{equation}\label{eq:cid}
\dim_\K\Or_{X,x}dx_1/\Id_W\Or_{X,x}dx_1=\dim_\K\Or_{X,x}/\Id_W\Or_{X,x}=I_x(X,W).
\end{equation}
Since $\Delta\in\Ka_X^\times$ we have
\begin{equation}\label{eq:rho}
\dim_\K\Delta\omega_{X,x}/\Delta\Or_{X,x}dx_1=\dim_\K\omega_{X,x}/\Or_{X,x}dx_1=\rho(dx_1).
\end{equation}
Combining \eqref{eq:additivity}, \eqref{eq:cid} and \eqref{eq:rho} we get
\begin{equation}\label{eq:total}
\dim_\K\Or_{X,x}/\Delta\Or_{X,x}=I_x(X,W)+\rho(dg).
\end{equation}
By Lemma \ref{lm:jacobian_minor} we obtain
\begin{equation}\label{eq:mult}
\dim_\K\Or_{X,x}dx_1/\Delta\Or_{X,x}dx_1=\dim_\K\Or_{X,x}/\Delta\Or_{X,x}=e(\Jac(Z,x)).
\end{equation}
Combining \eqref{eq:total} and \eqref{eq:mult} we obtain (\ref{ram-mult}). For $g$ equal to a general $\K$-linear combination of generators of $\m_{X,x}$ we can
apply Proposition \ref{prop:rho_to_ramification} and (\ref{ram-mult}) to get (\ref{key local}).

By \cite[Corollary 2.4]{BGR}, for general $Z$ the Hilbert-Samuel multiplicities of
$\Jac(Z,x)$ and $\Jac(X,x)$ are equal. Once $Z$ is chosen as above, we apply
(\ref{key local}) to obtain
\eqref{eq:mult_to_ramif}. Since $e(\Jac(X,x)),\delta_x$ and $e(R_X)$ are intrinsic
invariants, so is $\cid(X,x)$. Identity (\ref{eq:mult_to_ramif}) follows directly from
(\ref{key local}). Suppose $x$ is a tame point. Then Proposition \ref{prop:tame_ramification},
(\ref{ram-mult}) and (\ref{eq:mult_to_ramif}) yield \eqref{eq:mult_to_Milnor}. 
\end{proof}

There are two cases in which $I_x(X,W)$ can be computed directly from the Jacobian ideals of $X$ and $Z$.

\begin{cor}\label{smooth curve}
Let $x\in X$ be a smooth point. Then
\[I_x(X,W)=\dim_{\K}\Or_{X,x}/\Jac(Z,x).\]
\end{cor}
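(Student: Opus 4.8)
The plan is to read the statement off directly from Theorem \ref{thm:rho_to_multiplicity}. The starting point is the local identity \eqref{key local}, valid for any $Z$ reduced along $X$:
\[e(\Jac(Z,x))-I_x(X,W)=2\delta_x+e(R_X).\]
Since $x$ is a smooth point of $X$, the normalization $\nu$ is a local isomorphism over $x$: the ring $\Or_{X,x}$ is already normal, so $\delta_x=\dim_\K(\nu_*\Or_{\overline{X}})_x/\Or_{X,x}=0$. For the same reason $\Omega^1_{\overline{X}/X}$ vanishes in a neighborhood of $\nu^{-1}(x)$, hence its zeroth Fitting ideal $R_X=\Fitt_0(\Omega^1_{\overline{X}/X})$ is the unit ideal and $e(R_X)=\sum_{y\in\nu^{-1}(x)}\ord_y(R_X\Or_{\overline{X},y})=0$. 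Equivalently, a smooth point is automatically tame with $m_x=r_x=1$, so $e(R_X)=m_x-r_x=0$ by Proposition \ref{prop:tame_ramification}. Either way the right-hand side of \eqref{key local} vanishes and $I_x(X,W)=e(\Jac(Z,x))$.

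It then remains to replace the Hilbert--Samuel multiplicity of $\Jac(Z,x)$ by its colength. Here I would use that $x$ smooth forces $\Or_{X,x}$ to be a one-dimensional regular local ring, hence a discrete valuation ring. Since $\Jac(Z,x)$ is primary to the maximal ideal (as recorded in the introduction), in this DVR it is a power $\m_{X,x}^k$ of the maximal ideal, and for such an ideal the Hilbert--Samuel multiplicity and the colength coincide: one checks $e(\m_{X,x}^k)=k=\dim_\K\Or_{X,x}/\m_{X,x}^k$ from $(\m_{X,x}^k)^n=\m_{X,x}^{kn}$. Applying this to $\Jac(Z,x)$ gives $e(\Jac(Z,x))=\dim_\K\Or_{X,x}/\Jac(Z,x)$, and combined with the previous step this is exactly the claimed formula.

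There is no serious obstacle in this argument; the only points requiring attention are the vanishing of both terms on the right of \eqref{key local} at a smooth point and the elementary identity between multiplicity and colength for an $\m$-primary ideal in a DVR. Both follow immediately once one observes that smoothness makes $\nu$ a local isomorphism and turns $\Or_{X,x}$ into a discrete valuation ring, so the content of the corollary is really just the specialization of Theorem \ref{thm:rho_to_multiplicity} to the case where the ramification contribution $2\delta_x+e(R_X)$ is trivial.
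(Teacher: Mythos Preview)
Your proof is correct and follows essentially the same approach as the paper's: invoke \eqref{key local}, observe that smoothness forces $\delta_x=e(R_X)=0$, and then use that $\Or_{X,x}$ is a discrete valuation ring to identify the Hilbert--Samuel multiplicity of $\Jac(Z,x)$ with its colength. You supply more detail than the paper (which simply asserts ``Because $X$ is smooth, $\delta_x=e(R_X)=0$'' and ``Since $\Or_{X,x}$ is a discrete valuation ring\ldots''), but the logical skeleton is identical.
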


\begin{proof}
Because $X$ is smooth, $\delta_x=e(R_X)=0$. By (\ref{key local}) we have
\[e(\Jac(Z,x))=I_x(X,W).\]
Since $\Or_{X,x}$ is a discrete valuation ring we have
\[e(\Jac(Z,x))=\dim_{\K}\Or_{X,x}/\Jac(Z,x)\]
giving us the desired result.
\end{proof}

\begin{cor}\label{lci}
Assume $X$ is a local complete intersection at $x$. Then
\[I_x(X,W)=e(\Jac(Z,x))-e(\Jac(X,x)).\]
\end{cor}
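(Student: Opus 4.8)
The plan is to apply the master identity \eqref{key local} of Theorem \ref{thm:rho_to_multiplicity} twice: once for the given complete intersection $Z$, and once with $X$ itself playing the role of the ambient complete intersection. The guiding observation is that the right-hand side $2\delta_x+e(R_X)$ of \eqref{key local} is intrinsic to $(X,x)$ and does not depend on the choice of the linking complete intersection, while \eqref{key local} is valid for \emph{any} complete intersection curve reduced along $X$.

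First I would exploit the lci hypothesis to produce a second instance of \eqref{key local}. Since $X$ is a curve embedded in $A=\A^n_\K$ and $\Or_{X,x}$ is a complete intersection, the ideal $I_X$ is generated locally at $x$ by $n-1$ elements, which is exactly the codimension of $X$ in $A$. Thus in a suitable affine neighborhood $X$ is itself a complete intersection curve. As $X$ is reduced (the standing assumption of this section), it is trivially reduced along itself, so $X$ qualifies as an admissible choice of $Z$ in the setup preceding Theorem \ref{thm:rho_to_multiplicity}.

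With this choice one has $W=\overline{Z\setminus X}=\emptyset$, hence $I_x(X,W)=0$, and \eqref{key local} collapses to
\[e(\Jac(X,x))=2\delta_x+e(R_X).\]
Returning now to the original complete intersection $Z$, formula \eqref{key local} reads
\[e(\Jac(Z,x))-I_x(X,W)=2\delta_x+e(R_X)=e(\Jac(X,x)),\]
and rearranging gives $I_x(X,W)=e(\Jac(Z,x))-e(\Jac(X,x))$, as claimed.

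The argument is essentially a bookkeeping step once Theorem \ref{thm:rho_to_multiplicity} is available; the only point requiring care is the legitimacy of taking $Z=X$, which rests on translating the lci condition at $x$ into the statement that $X$ is cut out by precisely $n-1$ equations near $x$ and is therefore a genuine complete intersection in the required sense. Note that no generality or tameness assumption on $Z$ is needed here, since both applications use only the unconditional first identity \eqref{key local} of Theorem \ref{thm:rho_to_multiplicity}.
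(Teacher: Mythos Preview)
Your proof is correct and follows essentially the same approach as the paper: apply \eqref{key local} once to $Z$ and once to $X$ itself (using the lci hypothesis to view $X$ as its own linking complete intersection with empty residual curve), then subtract. Your added justification that the lci condition at $x$ makes $X$ a genuine complete intersection in a neighborhood is a point the paper leaves implicit.
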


\begin{proof}
Since both $(X,x)$ and $(Z,x)$ are complete intersections in $(\A^n_\K,x)$, we may 
apply (\ref{key local}) to each of them to obtain
\begin{align*}
    e(\Jac(Z,x))-I_x(X,W) &= 2\delta_x+e(R_X), \\
    e(\Jac(X,x)) &=2\delta_x+e(R_X)
\end{align*}
where, in the second case, we use the fact that the corresponding residual curve is empty. Subtracting the second identity from the first yields the desired result.
\end{proof}


\section{The center of the Nash blowup of a Gorenstein scheme}\label{Nash}

In this section $X$ is a reduced scheme of finite type over $\K$ and of pure dimension $d$. Recall that the {\it Nash blowup} $X' \rightarrow X$ is the closure of the rational section $X \dashrightarrow  \mathrm{Grass}_{d}(\Omega_{X/\K}^1)$ defined over the smooth locus of $X$.
In \cite[Proposition 1, p.\ 508]{Pi} Piene defines the $\omega$-jacobian ideal $J$ as
\[J:=\mathrm{Ann}(\mathrm{Coker}(\Omega^d_{X/\K}\xrightarrow{c_X}\omega_X))\]
and shows that if $X$ is a local complete intersection, then $J=\Jac(X)$ is the usual Jacobian ideal. Moreover, she shows
that when $X$ is Gorenstein, $J$ is the center of the Nash blowup $X'\to X$  \cite[Theorem 2, p.\ 516]{Pi}. We give an explicit expression of this
ideal locally, thus answering a question formulated in \cite[Remark 1), p.\ 516]{Pi}. For $X$ normal and Gorenstein, this description of $J$ has already been provided in \cite[Corollary 9.3]{EM}.

Fix an affine cover $(U_i)_{i\in I}$ of $X$. If furthermore $X$ is Gorenstein, we can
suppose that the $U_i$ trivialize $\omega_X$, i.e.\  for every $i$ in $I,\,\omega_X|_{U_i}
\simeq\Or_{U_i}$. Identify $X$ with an affine open from this cover. Set $X:=\mathrm{Spec}(R)$.
We can embed $X$ in some affine $n$-space $\A^n_{\K}$ and construct a complete intersection
$Z\subset \A^n_{\K}$ that contains $X$ and is reduced along $X$ (see Section \ref{construction}). Identify  $\mathrm{Jac}(Z)$ and  $\mathrm{Jac}(X)$ with their respective images in $R$.



Suppose $X$ is cut out by the equations $f_1,\ldots,f_r\in\K[x_1,\ldots,x_n]$. Denote by $L_1,\ldots,L_p$
the $e$-element subsets of $\{1,\ldots,r\}$. For each $i=1,\ldots,p$ pick a  general
$A_i\in\mathrm{Mat}(e\times r,\K)$ so that the complete intersection $Z_i$ cut out by the
equations of $A_i(f_1,\ldots,f_r)^T$ is reduced along $X$ for each $i$.

\begin{lm}\label{lm:vary_CIA}
For general $Z_i$ we have $\sum_{i=1}^p\Jac(Z_i)=\Jac(X)$.
\end{lm}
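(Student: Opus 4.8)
The plan is to reduce the statement to an elementary identity between the minors that generate the two Jacobian ideals, followed by a genericity argument. Write $\mathrm{Jac}(f)=(\partial f_l/\partial x_j)_{1\le l\le r,\,1\le j\le n}$ for the full $r\times n$ Jacobian matrix of the defining equations of $X$, and recall that $\Jac(X)$ is generated in $R$ by the $e\times e$ minors $M_{L,J}:=\det\big(\mathrm{Jac}(f)_{L,J}\big)$, where $L$ runs over the $e$-element subsets of $\{1,\ldots,r\}$ and $J$ over the $e$-element subsets of $\{1,\ldots,n\}$. Since $Z_i$ is cut out by the entries of $A_i(f_1,\ldots,f_r)^T$ and the entries of $A_i$ lie in $\K$, differentiation yields the polynomial identity $\mathrm{Jac}(g^{(i)})=A_i\,\mathrm{Jac}(f)$; hence $\Jac(Z_i)$ is generated by the minors $N_{i,J}:=\det\big((A_i\,\mathrm{Jac}(f))_{[e],J}\big)$, where $[e]$ denotes the full set of $e$ rows.

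The first step is the forward inclusion. By the Cauchy--Binet formula one has, in $R$, the relation $N_{i,J}=\sum_L c_{i,L}\,M_{L,J}$, where $c_{i,L}:=\det\big((A_i)_{[e],L}\big)\in\K$ is the maximal minor of $A_i$ on the columns $L$. Thus every generator of $\Jac(Z_i)$ is a $\K$-linear combination of generators of $\Jac(X)$, giving $\Jac(Z_i)\subseteq\Jac(X)$ and therefore $\sum_{i=1}^p\Jac(Z_i)\subseteq\Jac(X)$.

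For the reverse inclusion I would fix a column set $J$ and read the identities $N_{i,J}=\sum_L c_{i,L}M_{L,J}$ as a single linear system $N_{\bullet,J}=C\,M_{\bullet,J}$ over $\K$, where $C=(c_{i,L})$ is a $p\times p$ matrix with $p=\binom{r}{e}$ (the same index set $p$ labels both the chosen matrices $A_i$ and the subsets $L$), and $C$ is independent of $J$. If $C$ is invertible, then $M_{L,J}=\sum_i (C^{-1})_{L,i}\,N_{i,J}\in\sum_i\Jac(Z_i)$ for every $L$ and every $J$ simultaneously, which yields $\Jac(X)\subseteq\sum_i\Jac(Z_i)$ and hence the desired equality.

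The hard part will be guaranteeing that $C$ is invertible for a general choice of the $A_i$ while keeping each $Z_i$ reduced along $X$. I would argue that $\det C$, as a polynomial in the entries of $(A_1,\ldots,A_p)$, does not vanish identically: taking each $A_i$ to be the $0/1$ matrix selecting the rows indexed by $L_i$ forces $c_{i,L}=\delta_{L,L_i}$, so $C$ becomes the identity and $\det C=1$. Consequently $\{\det C\neq 0\}$ is a nonempty Zariski-open subset of the irreducible affine space $\prod_{i=1}^p\mathrm{Mat}(e\times r,\K)$. Intersecting it with the nonempty open locus on which all the $Z_i$ are reduced along $X$ (available from the construction of Section \ref{construction}) produces a nonempty open set of admissible tuples, which is precisely the genericity asserted in the statement.
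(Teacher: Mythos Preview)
Your proposal is correct and follows essentially the same route as the paper: both apply Cauchy--Binet to express the $e\times e$ minors of $J(Z_i)$ as $\K$-linear combinations of those of $J(X)$, assemble these relations into a single $p\times p$ linear system, and invert it once the coefficient matrix (your $C$, the paper's $\mathbf{A}$) has nonzero determinant; nonvanishing is established in both by specializing each $A_i$ to the $0/1$ row-selector for $L_i$, which makes the coefficient matrix the identity. Your explicit treatment of the forward inclusion and of intersecting the two open conditions is a nice touch, but there is no substantive difference from the paper's argument.
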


\begin{proof}
Let $K\subset\{1,\ldots,n\}$ be an $e$-element set. 
Write  $[J(Z_i)]_{K}$ for the $e\times e$ minor of the Jacobian matrix of $Z_i$
with columns in $K$, $[A_i]_{L_j}$ for the $e\times e$ minor of $A$ with columns in $L_j$, and
$[J(X)]_{L_j,K}$ for the $e\times e$ minor of the Jacobian matrix of $X$ with rows in $L_j$
and columns in $K$. For each $i=1, \ldots, p$, the generalized Cauchy-Binet formula gives us

\begin{equation}\label{eq:CauchyBinet}
[J(Z_i)]_{K}=\sum_{j=1}^p[A_i]_{L_j}[J(X)]_{L_j,K}.
\end{equation}
Set $\mathbf{J}_{Z,K}:=([J(Z_1)]_{K},\ldots,[J(Z_p)]_{K})^T$ and $\mathbf{J}_{X,K}:=([J(X)]_{L_1,K},\ldots,[J(X)]_{L_p,K})^T$. Define the square matrix $\mathbf{A}:=([A_i]_{L_j})_{1\leqslant i,j\leqslant p}$.
The relations in \eqref{eq:CauchyBinet} can be written in a matrix form as
\[\mathbf{J}_{Z,K}=\mathbf{A}\mathbf{J}_{X,K}.\]
We want to impose on the $A_i$ the extra condition
\begin{equation}\label{eq:general_matrix}
\det(\mathbf{A})\neq 0.
\end{equation}
If (\ref{eq:general_matrix}) is satisfied, then
$\mathbf{J}_{X,K}=\mathbf{A}^{-1}\mathbf{J}_{Z,K}$ and thus each minor $[J(X)]_{L_j,K}$
is a linear combination of the minors $[J(Z_i)]_{K}$. Since $K$ is arbitrary,
$\sum_{i=1}^p \Jac(Z_i)$ would then contain all the $e\times e$ minors of $J(X)$ and so
$\sum_{i=1}^p\Jac(Z_i)=\Jac(X)$. It remains to show that the condition
\eqref{eq:general_matrix} is general. It is enough to show it is a non-zero polynomial in the
entries of the matrices $A_i$. To test this we construct matrices $A_i^0 \in\mathrm{Mat}
(e\times r,\K)$ such that the corresponding $\mathbf{A}^0$ built from their $e\times e$ minors
verifies $\det(\mathbf{A}^0)\neq0$. Suppose $L_i=\{k_1^i,\ldots,k_e^i\}$ with
$k_1^i<\ldots<k_e^i$. For each $i=1,\ldots,p$ define $A^0_i$ by
\[(A^0_i)_{\ell,k}:=\left\{\begin{array}{ll}
    \delta_{\ell,j} & \text{if }k=k_j^i  \\
    0 & \text{otherwise.}
\end{array}\right.\]
In other words, for $j=1,\ldots,e$ the $k_j^i$-th column of $A^0_i$ has a $1$ in the $j$-th
row and is zero everywhere else. A direct computation shows that $[A^0_i]_{L_j}=\delta_{i,j}$,
so that $\mathbf{A}^0=I_e$ is the identity matrix and $\det(\mathbf{A}^0)=1$.
This concludes our proof.
\end{proof}

As usual, set $W:=\overline{Z\setminus X}$. Denote by $I_W$ the ideal of $W$ in $R$. From Remark
\ref{rke:affine_structure} we have $\Jac(Z)\subset I_W$.

\begin{prop}\label{cor:Piene}
With the above notations we have
\[J=(\Jac(Z):_{R} I_W).\]
Furthermore, if $X$ is Gorenstein, then $J=\Jac(X)$ if and only if $X$ is a local
complete intersection.
\end{prop}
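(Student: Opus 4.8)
The plan is to read off both assertions from the local description of $\omega_X$ obtained in Remark \ref{rke:affine_structure} and Corollary \ref{eq:complete_intersection}, and then to feed the colon formula into Lemma \ref{lm:vary_CIA}.

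For the colon description of $J$, I would chase the diagram \eqref{diag:structure}. After embedding everything into $\Omega^d_{\Ka_X}$, Corollary \ref{eq:complete_intersection} identifies $\omega_X$ with $\Delta^{-1}I_W\Or_X\,dx_1\wedge\cdots\wedge dx_d$, while Remark \ref{rke:affine_structure} computes that the composite of $c_X$, $\Bar{i}$ and the inclusion $\omega_Z|_X\hookrightarrow\Omega^d_{\Ka_X}$ is exactly the localization map $\lambda$, whose image is $\Delta^{-1}\Jac(Z)\Or_X\,dx_1\wedge\cdots\wedge dx_d$. Since $\Jac(Z)\subset I_W$, dividing out by the injective multiplication $r\mapsto \Delta^{-1}r\,dx_1\wedge\cdots\wedge dx_d$ gives an $R$-module isomorphism
\[
\mathrm{Coker}(c_X)=\omega_X/\mathrm{Im}(c_X)\;\cong\;I_W/\Jac(Z).
\]
Taking annihilators yields $J=\mathrm{Ann}_R(I_W/\Jac(Z))=(\Jac(Z):_R I_W)$. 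The one point requiring care is that $\Bar{i}\circ c_X$ (followed by the inclusion) genuinely coincides with $\lambda$, which is precisely the commutativity recorded in \eqref{diag:structure}.

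For the equivalence, assume $X$ is Gorenstein. By Proposition \ref{thm:adjunction} the image of $I_W$ in $R$ is locally principal, so at a point $x$ we may write $I_W=(w)$ with $w$ a non-zero-divisor; together with $\Jac(Z)\subset(w)$ and the colon formula this gives $\Jac(Z)=w\,J$. If $X$ is a local complete intersection, then $J=\Jac(X)$ by Piene's theorem (equivalently, take $Z=X$ locally, so $I_W=R$ and the colon formula collapses to $J=\Jac(X)$). Conversely, suppose $J=\Jac(X)$. I would invoke the construction of Lemma \ref{lm:vary_CIA}: choosing general complete intersections $Z_1,\dots,Z_p$ with $I_{W_i}=(w_i)$ and $\Jac(Z_i)=w_i J$, that lemma gives
\[
\Jac(X)=\sum_{i=1}^p\Jac(Z_i)=\Big(\sum_{i=1}^p(w_i)\Big)J=\mathfrak w\,J,\qquad \mathfrak w:=\sum_{i=1}^p I_{W_i},
\]
and substituting $J=\Jac(X)$ turns this into $\Jac(X)=\mathfrak w\,\Jac(X)$.

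I would then close with the determinant trick. Since $X$ is reduced, its singular locus contains no irreducible component, so $\Jac(X)$ lies in no minimal prime of $R$ and hence contains a non-zero-divisor. Cayley--Hamilton applied to the finitely generated module $\Jac(X)=\mathfrak w\,\Jac(X)$ produces $a\in\mathfrak w$ with $(1-a)\Jac(X)=0$; the non-zero-divisor forces $a=1$, so $\mathfrak w=R$ locally at $x$. As $R=\Or_{X,x}$ is local, $\sum_i(w_i)=R$ forces some $w_i$ to be a unit, i.e.\ $I_{W_i}=R$, so $W_i$ misses $x$ and $Z_i=X$ near $x$ exhibits $X$ as a complete intersection at $x$; since $x$ is arbitrary, $X$ is a local complete intersection. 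I expect this last step to be the main obstacle: one must exploit the existence of a non-zero-divisor in $\Jac(X)$ to upgrade $\Jac(X)=\mathfrak w\,\Jac(X)$ all the way to $\mathfrak w=R$ (a plain Nakayama argument would only yield $J=0$, which is false), thereby producing an honest local complete intersection rather than a mere numerical coincidence.
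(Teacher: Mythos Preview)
Your proposal is correct and follows the paper's approach closely. The colon formula is derived exactly as in the paper, by reading off $\mathrm{Coker}(c_X)\cong I_W/\Jac(Z)$ from diagram~\eqref{diag:structure} and Corollary~\ref{eq:complete_intersection}; and the equivalence likewise rests on the identity $\Jac(Z)=wJ$ for a local generator $w$ of $I_W$, combined with Lemma~\ref{lm:vary_CIA} to produce $\Jac(X)=\mathfrak w\,J$.

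The only difference is the closing move. The paper argues by contradiction: if $X$ is not lci at $x$ then every $w_i\in\m_{X,x}$, so $\mathfrak w\subset\m_{X,x}$, and from $J=\Jac(X)=\mathfrak w J$ ordinary Nakayama gives $J=0$, which is absurd since $J\supset\Jac(Z)\neq 0$. You instead run the determinant trick directly on $\Jac(X)=\mathfrak w\,\Jac(X)$, using that $\Jac(X)$ contains a non-zero-divisor, to force $\mathfrak w=R$. Both are valid; your parenthetical that ``plain Nakayama would only yield $J=0$, which is false'' slightly mischaracterizes the situation---that conclusion \emph{is} false, and that is precisely the contradiction the paper uses. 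Your direct route via Cayley--Hamilton is a clean alternative that avoids the contrapositive framing but buys nothing extra.
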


\begin{proof}
By Remark \ref{rke:affine_structure} and Corollary \ref{eq:complete_intersection}, since
$\Delta\in\Ka_X^\times$ and $dx_1\wedge\cdots\wedge dx_d$ is a basis for $\Omega^d_{\Ka_X}$,
we have
\[J=\mathrm{Ann}(\mathrm{Coker}(\Jac(Z)\hookrightarrow I_W))=
(\Jac(Z):_{R} I_W).\]

Suppose $X$ is Gorenstein. The question is local, so we can work locally at a singular point
$x\in X$.  We have that $\omega_X$ is invertible, and so $I_W$ is principal by Corollary
\ref{eq:complete_intersection}. Call $h$ a generator of $I_W$. Since $X$ and $W$
have no common irreducible component, we have $h\in\Ka_X^\times$. Suppose by contradiction
that $X$ is not a complete intersection at $x$, so that necessarily $h\in\m_{X,x}$.
We have $J(h)=\Jac(Z,x)$. By Lemma \ref{lm:vary_CIA} we construct $p$ complete intersections $Z_i$
such that $\sum_i\Jac(Z_i)=\Jac(X)$. For each $i$ set $h_i\in\m_{X,x}$ such that
$J(h_i)=\Jac(Z_i,x)$ and define $\q=\sum_{i=1}^ph_i\Or_{X,x}$. We clearly have
$\q\subset\m_{X,x}$ and $J\q=\Jac(X)$. If $J\subset\Jac(X)$, then $J\q=J$.
By Nakayama's lemma $J=(0)$, which is impossible.

The converse is already established by Piene. In fact, she proved that if $X$ is a local
complete intersection, then $J=\Jac(X)$ \cite[Proposition 1, p.\ 508]{Pi}.
\end{proof}

Assume $(X,x)$ is a curve. Consider the blowup $\mathrm{Bl}_{J}(X)$ of $X$ with center $J$. Denote by $D$ exceptional divisor. We have that $D \rightarrow x$ is proper and $D$ is zero-dimensional cycle $D=\sum_{}m_{p}[p]$ in  $A_0(\mathrm{Bl}_{J}(X))$ (the group of zero cycles modulo rational equivalence). Its degree is defined as $\mathrm{deg}(D)=\sum_p m_{p}[k(p):\K]=\sum_p m_{p}$, where the last identity follows from the assumption that $\K$ is algebraically closed (see \cite[Definition 1.4]{Ful}).

\begin{cor}\label{degree} Suppose $(X,x)$ is a reduced  
Gorenstein curve. Then $$\mathrm{deg}(D)=e(\mathrm{Jac}(X,x))-\mathrm{cid}(X,x).$$
\end{cor}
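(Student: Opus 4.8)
The plan is to compute $\deg(D)$ by identifying the exceptional divisor of the Nash blowup with the blowup along the $\omega$-Jacobian ideal $J$ and then relating the degree of this divisor to the colength of $J$ via standard intersection theory for blowups of a one-dimensional local ring. Since $(X,x)$ is a reduced Gorenstein curve, Proposition \ref{cor:Piene} tells us that $J = (\Jac(Z):_R I_W)$, and by \cite[Theorem 2, p.\ 516]{Pi} this $J$ is exactly the center of the Nash blowup, so $\mathrm{Bl}_J(X)$ is the Nash modification and $D$ is its exceptional divisor. First I would reduce to the local ring $\Or_{X,x}$: because $D \to x$ is proper and supported over the single point $x$, the degree $\deg(D)$ depends only on the local data $(\Or_{X,x}, J\Or_{X,x})$.

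Next I would use that for a one-dimensional reduced local ring the degree of the exceptional divisor of the blowup along an $\m_{X,x}$-primary ideal $\mathfrak{a}$ equals the Hilbert--Samuel multiplicity $e(\mathfrak{a})$, and that more generally $\deg(D)$ is the difference of the relevant multiplicities. Concretely, since $X$ is Gorenstein, Corollary \ref{eq:complete_intersection} shows $I_W$ is locally principal, generated by some $h \in \Ka_X^\times$, and $\Jac(Z,x)$ is likewise governed by the inclusions from that corollary. The key computation is that $J = (\Jac(Z,x) : h)$ with $I_W = h\Or_{X,x}$, so multiplying by $h$ gives $hJ = \Jac(Z,x) \cap h\Or_{X,x}$, which by $h \in \Ka_X^\times$ has the same colength in $\Or_{X,x}$ as $J$ up to the colength of $h\Or_{X,x}$. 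This lets me express $e(J)$ in terms of $e(\Jac(Z,x))$ and the colength $\dim_\K \Or_{X,x}/h\Or_{X,x} = I_x(X,W) = \cid(X,x)$, yielding
\[
e(J) = e(\Jac(Z,x)) - \cid(X,x).
\]
Finally, applying \eqref{eq:mult_to_ramif} together with the Gorenstein (lci-for-general-$Z$) comparison $e(\Jac(X,x)) = e(\Jac(Z,x))$ from Theorem \ref{thm:rho_to_multiplicity}, I would rewrite the right-hand side as $e(\Jac(X,x)) - \cid(X,x)$, and identify $\deg(D) = e(J)$.

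The step I expect to be the main obstacle is the precise identification of $\deg(D)$ with the multiplicity $e(J)$ (equivalently, the colength-type quantity above). For a blowup of a one-dimensional local ring along an $\m$-primary ideal this is classical, but here $J$ need not be $\m_{X,x}$-primary in general and the exceptional divisor is a zero cycle whose degree must be read off from the associated graded / the fiber $\mathrm{Proj}$ of the Rees algebra; one must verify that the module-theoretic colength computation via $h$ genuinely matches the intersection-theoretic degree in $A_0(\mathrm{Bl}_J(X))$ as defined through \cite[Definition 1.4]{Ful}. I would handle this by passing to the normalization or to the graded pieces of the Rees algebra of $J$ and invoking the additivity of length across the factorization $hJ = \Jac(Z,x) \cap h\Or_{X,x}$, so that the bookkeeping of lengths reproduces the stated formula.
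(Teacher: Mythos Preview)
Your approach is essentially the same as the paper's: identify $\deg(D)=e(J)$, use $\Jac(Z,x)=Jh$ (which follows from $J=(\Jac(Z,x):h)$ together with $\Jac(Z,x)\subset I_W=(h)$), split $e(\Jac(Z,x))=e(J)+e((h))$, and invoke $e(\Jac(Z,x))=e(\Jac(X,x))$ for general $Z$. Your concern about $J$ not being $\m_{X,x}$-primary is unfounded (we are working at a singular point of a curve germ, so $V(J)=\{x\}$), and the identification $\deg(D)=e(J)$ is exactly Ramanujam's theorem \cite{Ram} (see also \cite[\S 4.3]{Ful}); the additivity $e(Jh)=e(J)+e((h))$ is cleanest via valuations on the normalization rather than colengths, as in \cite[\S 7]{BouAC}.
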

\begin{proof}
Denote by $e(J)$ the Hilbert--Samuel multiplicity of $J$ in $\mathcal{O}_{X,x}$. By \cite{Ram}, and \S 4.3 and Ex.\ 4.3.4 in \cite{Ful}) we have $\mathrm{deg}(D)=e(J)$. By Proposition \ref{cor:Piene} we have $\mathrm{Jac}(Z, x)=J(h)$, where $h$ is generator for $I_W$. Because $(X,x)$ is a curve, by \cite[§7, no.\ 1 and 3]{BouAC} we have the following identity of Hilbert--Samuel multiplicities
$$e(\mathrm{Jac}(Z,x))=e(J(h)\mathcal{O}_{\overline{X}})=e(J\mathcal{O}_{\overline{X}})+e((h)\mathcal{O}_{\overline{X}})=e(J)+e((h)).$$ 
For general $Z$ by \cite[Proposition 2.3]{BGR} we have $\overline{\mathrm{Jac}(Z,x)}=\overline{\mathrm{Jac}(X,x)}$. In particular, $e(\mathrm{Jac}(Z,x))=e(\mathrm{Jac}(X,x))$. Because $(X,x)$ is reduced we have $$e((h))=\dim \mathcal{O}_{X,x}/(h)=I_{x}(X,W)=\mathrm{cid}(X,x).$$
Thus $\mathrm{deg}(D)=e(J)=e(\mathrm{Jac}(X,x))-\mathrm{cid}(X,x)$ which is what we wanted. 
\end{proof}

\section{The genus formula}\label{sec:degree}

In this section we prove the genus formula (\ref{eq:arithmetic genus}) for any
Cohen-Macaulay projective curve $X\subset\Pj_\K^n$ embedded in a complete intersection
$Z\subset\Pj_\K^n$ defined by homogeneous polynomials $F_1,\ldots,F_{n-1}$ of respective
degrees $d_1\geqslant\ldots\geqslant d_{n-1}$. No assumption on the characteristic of $\K$
is needed. We use Proposition \ref{thm:adjunction} to compute the Euler characteristic of
$\omega_{X/\K}$. The arithmetic genus $p_a(X)$ is by definition equal to $1-\chi(X,\Or_X)$,
where $\chi(X,\Or_X):=\dim_{\K}H^{0}(X,\mathcal{O}_X)-\dim_{\K}H^{1}(X,\mathcal{O}_X)$ is the Euler characteristic the structure sheaf $\Or_X$.

\subsection{A degree formula for tensor products}

We  generalize
\cite[\href{https://stacks.math.columbia.edu/tag/0AYV}{Tag 0AYV}]{St} in a straightforward manner to any
equidimensional proper $\K$-scheme of dimension $1$. This includes
reducible such schemes. We follow a similar \textit{dévissage}
argument as the one used in the proof of \cite[\href{https://stacks.math.columbia.edu/tag/0AYV}{Tag 0AYV}]{St}. Recall that for any proper $\K$-scheme of dimension $\leqslant1$, the
\textit{degree} of a locally free $\Or_X$-module $\mathcal{E}$, of constant rank
$\mathrm{rk}(\mathcal{E})$ is defined as
\[\deg(\mathcal{E}):=\chi(X,\mathcal{E})-\mathrm{rk}(\mathcal{E})\chi(X,\Or_X).\]

\begin{prop}\label{prop:ranks}
Consider a field $\K$ and $X$ a proper $\K$-scheme, equidimensional of
dimension $1$. Denote by $X_1,\ldots,X_s$ the irreducible components of $X$. Let
$\mathcal{E}$ be a locally free $\Or_X$-module on $X$ of constant rank
$\mathrm{rk}(\mathcal{E})$ and $\mathcal{F}$ a coherent $\Or_X$-module.
Define the generic rank of $\mathcal{F}$ at $X_i,\,i=1,\ldots,s$ as
\[r_{\xi_i} (\mathcal{F}):=\mathrm{length}_{\Or_{X,\xi_i}}(\mathcal{F}_{\xi_i}),\]
where $\xi_i$ is the generic point of $X_i$. We have the following formula
\begin{equation}
\chi(X,\mathcal{E}\otimes\mathcal{F})=
\sum_{i=1}^sr_{\xi_i}(\mathcal{F})\deg(\mathcal{E}|_{X_i^{\mathrm{red}}})+
\mathrm{rk}(\mathcal{E})\chi(X,\mathcal{F}).
\label{eq:ranks}
\end{equation}
\end{prop}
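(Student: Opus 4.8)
The plan is to prove \eqref{eq:ranks} by \emph{dévissage} on the coherent sheaf $\mathcal{F}$, reducing to the case where $\mathcal{F}$ is the structure sheaf of an integral subscheme and then to the case $\mathcal{F} = \Or_{X_i^{\mathrm{red}}}$, for which the formula holds essentially by the definition of $\deg(\mathcal{E}|_{X_i^{\mathrm{red}}})$. First I would record the two elementary properties on which everything rests: (i) both sides of \eqref{eq:ranks} are additive on short exact sequences $0\to\mathcal{F}'\to\mathcal{F}\to\mathcal{F}''\to0$ of coherent sheaves. For the left side this uses that $\mathcal{E}$ is locally free, so $-\otimes\mathcal{E}$ is exact and $\chi$ is additive; for the right side, $\chi(X,-)$ is additive and the generic ranks satisfy $r_{\xi_i}(\mathcal{F})=r_{\xi_i}(\mathcal{F}')+r_{\xi_i}(\mathcal{F}'')$ because localization at $\xi_i$ is exact and length is additive on the resulting sequence of finite-length $\Or_{X,\xi_i}$-modules. (ii) The formula holds when $\mathcal{F}$ is supported in dimension $0$: then all $r_{\xi_i}(\mathcal{F})=0$ and one must check $\chi(X,\mathcal{E}\otimes\mathcal{F})=\mathrm{rk}(\mathcal{E})\chi(X,\mathcal{F})$, which follows because a $0$-dimensional coherent sheaf has a finite filtration with skyscraper quotients $\kappa(x)$, and $\mathcal{E}\otimes\kappa(x)\cong\kappa(x)^{\oplus\mathrm{rk}(\mathcal{E})}$ since $\mathcal{E}$ is locally free of rank $\mathrm{rk}(\mathcal{E})$.

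With these in hand I would run the standard Noetherian induction. By \cite[\href{https://stacks.math.columbia.edu/tag/01YF}{Tag 01YF}]{St} every coherent $\mathcal{F}$ admits a finite filtration whose successive quotients are of the form $(\iota_Y)_*\mathcal{L}$ for integral closed subschemes $\iota_Y\colon Y\hookrightarrow X$ and $\mathcal{L}$ a torsion-free rank-one (hence coherent) sheaf on $Y$; by additivity (i) it suffices to verify \eqref{eq:ranks} for each such quotient. If $\dim Y = 0$ this is case (ii). If $\dim Y = 1$ then $Y = X_i^{\mathrm{red}}$ for a unique $i$, and by the projection formula $\chi(X,\mathcal{E}\otimes(\iota_Y)_*\mathcal{L})=\chi(Y,\mathcal{E}|_Y\otimes\mathcal{L})$, while $r_{\xi_j}((\iota_Y)_*\mathcal{L})=\delta_{ij}\cdot\mathrm{length}_{\Or_{Y,\xi_i}}(\mathcal{L}_{\xi_i})$, which equals $\delta_{ij}$ since $\mathcal{L}$ has rank one on the integral curve $Y$. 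Thus the claim on $X$ reduces to the single-component identity $\chi(Y,\mathcal{E}|_Y\otimes\mathcal{L})=\deg(\mathcal{E}|_Y)+\mathrm{rk}(\mathcal{E})\chi(Y,\mathcal{L})$ on an \emph{integral} proper curve $Y$.

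On the integral curve $Y$ I would finish by a second, shorter dévissage in $\mathcal{L}$, comparing $\mathcal{L}$ to $\Or_Y$. Since $\mathcal{L}$ is torsion-free of rank one on integral $Y$, one can embed it into its constant generic stalk and find an effective comparison: there is a nonzero map relating $\mathcal{L}$ and $\Or_Y$ whose kernel and cokernel are $0$-dimensional, giving short exact sequences to which (i) and case (ii) apply, so $\chi(Y,\mathcal{E}|_Y\otimes\mathcal{L})-\chi(Y,\mathcal{E}|_Y)=\mathrm{rk}(\mathcal{E})\bigl(\chi(Y,\mathcal{L})-\chi(Y,\Or_Y)\bigr)$. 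The case $\mathcal{L}=\Or_Y$ is exactly the definition of $\deg(\mathcal{E}|_Y)$. I expect the main obstacle to be purely bookkeeping: one must handle the reducible/non-reduced structure of $X$ carefully so that the generic-rank coefficients $r_{\xi_i}(\mathcal{F})$ track correctly through the filtration (in particular that they vanish on lower-dimensional pieces and pick out the correct component with multiplicity one on each $(\iota_{X_i^{\mathrm{red}}})_*\mathcal{L}$), and confirm that $\deg$ is computed on the \emph{reduced} components $X_i^{\mathrm{red}}$ as the statement asserts, which is why the dévissage must factor through integral subschemes rather than the scheme-theoretic components of $X$.
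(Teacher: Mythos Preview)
Your proposal is correct and follows essentially the same d\'evissage strategy as the paper. The only difference is packaging: the paper invokes the abstract d\'evissage theorem \cite[\href{https://stacks.math.columbia.edu/tag/01YI}{Tag 01YI}]{St}, which reduces directly to checking the property for $j_*\Or_Y$ with $Y$ integral (so no second comparison step is needed), whereas you use the explicit filtration \cite[\href{https://stacks.math.columbia.edu/tag/01YF}{Tag 01YF}]{St}, whose graded pieces are pushforwards of nonzero ideal sheaves $\mathcal{L}\subset\Or_Y$, and then compare $\mathcal{L}$ to $\Or_Y$ by hand via the $0$-dimensional cokernel---this is exactly the extra work that the abstract d\'evissage lemma absorbs into its hypotheses.
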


\begin{proof}
We say that $\mathcal{F}$
satisfies the property $\mathcal{P}$ if $\mathcal{F}$ verifies
\eqref{eq:ranks}. We wish to apply the general \textit{dévissage} theorem,
Lemma 30.12.6 \cite[\href{https://stacks.math.columbia.edu/tag/01YI}{Tag 01YI}]{St}.
Part (1) of Lemma 30.12.6 is verified because Euler characteristics and the
quantities $r_{\xi_i}(-)$ are additive. Since $\mathcal{E}$ is locally free it is
flat and thus both left-hand side and right-hand side of \eqref{eq:ranks} are
additive in $\mathcal{F}$. To show part (2) of Lemma 30.12.6 is verified, we
consider any integral closed subscheme $Y\subset X$. Write $j:Y\hookrightarrow X$
for the closed immersion morphism. Consider $\mathcal{G}:=j_*\Or_Y$. Suppose
$Y=\{p\}$ is a closed point $p\in X$. Property $\mathcal{P}$ holds by part (5) from
Lemma 33.33.3 in
\cite[\href{https://stacks.math.columbia.edu/tag/0AYT}{Tag 0AYT}]{St} and the fact
that all $r_{\xi_i}$ are $0$ in this case.

Set $Y=X_i^{\mathrm{red}}$. Points (a), (b) and (c) of part (2) in the dévissage
theorem are clearly verified as $\mathcal{G}_{\xi_j}$ is $\kappa(\xi_i)$ if $i=j$
and $0$ otherwise. Thus $r_{\xi_i}(\mathcal{G})=\delta_{i,j}$. To check point (d) of
part (2) of the dévissage theorem we use
\cite[\href{https://stacks.math.columbia.edu/tag/089W}{Tag 089W}]{St}
\[\chi(X,\mathcal{G})=\chi(X,j_*\Or_Y)=\chi(Y,\Or_Y).\]
By the projection formula
\cite[\href{https://stacks.math.columbia.edu/tag/01E8}{Tag 01E8}]{St}
\[\mathcal{E}\otimes\mathcal{G}=\mathcal{E}\otimes j_*\Or_Y=
j_*(j^*\mathcal{E}\otimes\Or_Y)=j_*j^*(\mathcal{E})=j_*(\mathcal{E}|_{Y}),\]
so again by \cite[\href{https://stacks.math.columbia.edu/tag/089W}{Tag 089W}]{St}
\[\chi(X,\mathcal{E}\otimes\mathcal{G})=\chi(Y,\mathcal{E}|_{Y}).\]
Property $\mathcal{P}$ is now equivalent to the definition of the degree of
$\mathcal{E}|_{Y}$:
\[\deg(\mathcal{E}|_{Y})=\chi(Y,\mathcal{E}|_{Y})-
\mathrm{rk}(\mathcal{E}|_{Y})\chi(Y,\Or_{Y}).\]
Indeed $\mathrm{rk}(\mathcal{E}|_{Y})=\mathrm{rk}(\mathcal{E})$.
\end{proof}

\subsection{The arithmetic genus formula}
Below we prove Theorem \ref{thm:degree} (\ref{eq:arithmetic genus}).
As usual denote $W=\overline{Z\setminus X}$ and $i:X\hookrightarrow Z,
\,i':W\hookrightarrow Z$ the respective closed immersions.
Write $Z=\bigcup_{i=1}^sZ_i$ for the irreducible decomposition of $Z$. Up to
re-indexing, suppose $Z_1,\ldots,Z_t$ correspond to the irreducible components of $X$
and $Z_{t+1},\ldots,Z_s$ to the irreducible components of $W$. For each $i$ denote by $\xi_i$
the generic point of $Z_i$. We now successively
apply \cite[\href{https://stacks.math.columbia.edu/tag/089W}{Tag 089W}]{St}, our
adjunction-type formula (\ref{eq:adjunction}) and Proposition \ref{prop:ranks} to
$\mathcal{E}=\omega_{Z/\K}$ and $\mathcal{F}=\Id_W$. We have
\begin{align}\label{eq:Euler}
\chi(X,\omega_{X/\K})
&=\chi(Z,i_*\omega_{X/\K}) \\
&=\chi(Z,\Id_W\otimes\omega_{Z/\K}) \notag \\
&=\sum_{i=1}^sr_{\xi_i}(\Id_W)\deg(\omega_{Z/\K}|_{Z_i^{\mathrm{red}}})+
\chi(Z,\Id_W). \notag
\end{align}
Recall that $\omega_{Z/\K}$ is locally free of rank $1$ and by \cite[Ch.\ III, Thm. 7.11]{HaAG}
\[\omega_{Z/\K}=\omega_{\Pj_\K^n}|_Z\otimes
\left(\bigwedge^{n-1}\Id_Z/\Id_Z^2\right)^\vee.
\label{eq:adjunction_complete_intersection}\]
By \cite[Ch.\ II, Ex.\ 8.20.1]{HaAG} we have $\omega_{\Pj_\K^n}\simeq
\Or_{\Pj_\K^n}(-n-1)$, so
$\omega_{\Pj_\K^n}|_Z\simeq\Or_Z(-n-1).$
The coherent module $\Id_Z/\Id_Z^2$ is locally free of rank $n-1$: the sheaf morphism
\[\begin{tikzcd}
\Or_{\Pj_\K^n}(-d_1)\oplus\ldots\oplus
\Or_{\Pj_\K^n}(-d_{n-1})\ar[r] & \Or_{\Pj_\K^n},\quad
e_i \ar[r,maps to] & f_i,
\end{tikzcd}\]
where $e_i$ is the basis element of the $i$-th factor in the above direct sum,
has image $\Id_Z$ and induces an isomorphism
$\Or_{\Pj_\K^n}(-d_1)\oplus\ldots\oplus\Or_{\Pj_\K^n}(-d_{n-1})
\simeq
\Id_Z/\Id_Z^2.$
Therefore, $\bigwedge^{n-1}\Id_Z/\Id_Z^2$ is isomorphic to
$\Or_Z(-d_1-\cdots-d_{n-1})$. We obtain
\begin{equation}\label{eq:omega_Z}
\omega_{Z/\K}\simeq\Or_Z(d_1+\cdots+d_{n-1}-n-1).
\end{equation}
In particular, $$\deg(\omega_{Z/\K}|_{Z_i^{\mathrm{red}}})=
\deg\Or_{Z_i^{\mathrm{red}}}(d_1+\cdots+d_{n-1}-n-1)=
\deg(Z_i^{\mathrm{red}})(d_1+\cdots+d_{n-1}-n-1).$$

We now compute the $r_{\xi_i}$. Consider the canonical short exact sequence
\begin{equation}\label{SES:ideal}
\begin{tikzcd}
0\ar[r]&\Id_W \ar[r] & \Or_Z \ar[r] & i'_*\Or_W \ar[r] & 0.
\end{tikzcd}
\end{equation}
There is a corresponding short exact sequence for stalks at the $\xi_i$, thus, by
additivity of $\kappa(\xi_i)$ vector spaces, we have
\[r_{\xi_i}(\Id_W)=r_{\xi_i}(\Or_Z)-r_{\xi_i}(i'_*\Or_W).\]
For any equidimensional ring $A$ and ideal $I$ that is the intersection of certain
primary ideals from the minimal primary decomposition of $(0)$ in $A$,
the localizations of $A/I$ at any $q\in\mathrm{Min}(A)$ are
\[(A/I)_q\simeq\begin{cases}
    A_q & \text{if }q\in\mathrm{Min}(I) \\
    0 & \text{otherwise.}
\end{cases}\]
Thus $r_{\xi_i}(\Or_Z)=\mathrm{length}_{\Or_{Z,\xi_i}}(\Or_{Z,\xi_i})$ and
$r_{\xi_i}(i'_*\Or_W)=\mathrm{length}_{\Or_{Z,\xi_i}}(\Or_{Z,\xi_i})$ or $0$ whether
$\xi_i$ is a generic point of an irreducible component of $W$ or not. So,
\begin{align}\label{eq:sum}
\sum_{i=1}^sr_{\xi_i}(\Id_W)\deg(\omega_{Z/\K}|_{Z_i})
&=\sum_{i=1}^t\mathrm{length}_{\Or_{Z,\xi_i}}(\Or_{Z,\xi_i})
\deg(Z_i^{\mathrm{red}})(d_1+\cdots+d_{n-1}-n-1) \notag \\
&=\deg(X)(d_1+\cdots+d_{n-1}-n-1), 
\end{align}
as $\sum_{i=1}^t\mathrm{length}_{\Or_{Z,\xi_i}}(\Or_{Z,\xi_i})
\deg(Z_i^{\mathrm{red}})=\deg(X)$ by Bézout (one can alternatively use
\eqref{eq:ranks} with $\mathcal{E}=\Or_X(1)$ and $\mathcal{F}=\Or_X$).
We now use \eqref{SES:ideal} a second time to compute the Euler characteristic of
$\Id_W$
\begin{equation}\label{eq:Euler_W}
\chi(Z,\Id_W)=\chi(Z,\Or_Z)-\chi(Z,i'_*\Or_W)=\chi(Z,\Or_Z)-\chi(W,\Or_W).
\end{equation}

Consider now another short exact sequence
\begin{equation}\label{SES:sum}
\begin{tikzcd}
0\ar[r]&\Or_Z \ar[r]& i_*\Or_X\oplus i'_*\Or_W\ar[r] &\Or_Z/(\Id_X+\Id_W) \ar[r] & 0.
\end{tikzcd}
\end{equation}
The first map is simply the sum of the structure maps $\Or_Z\to i_*\Or_X$ and
$\Or_Z\to i'_*\Or_W$. The second map is the difference map on sections
$(\alpha,\beta)\mapsto\alpha-\beta\mod\Id_X+\Id_W$.
Observe that the coherent sheaf $\Or_Z/(\Id_X+\Id_W)$ has discrete support $X\cap W$. By (2) and (4) in
\cite[\href{https://stacks.math.columbia.edu/tag/0AYT}{Tag 0AYT}]{St} we have

\begin{align}\label{eq:finite_support}
\chi(\Or_Z/(\Id_X+\Id_W))
&=\dim_\K H^0(Z,\Or_Z/(\Id_X+\Id_W)) \\
&=\sum_{z\in X\cap W}\dim_\K\Or_{Z,z}/(\Id_X+\Id_W) \notag \\
&=I(X,W). \notag
\end{align}

By additivity of $\chi$ in \eqref{SES:sum} and
\cite[\href{https://stacks.math.columbia.edu/tag/089W}{Tag 089W}]{St} we have
\begin{align}\label{eq:Euler_gives_cid}
I(X,W)
&=\chi(Z,i_*\Or_X\oplus i'_*\Or_W)-\chi(Z,\Or_Z) \\
&=\chi(Z,i_*\Or_X)+\chi(Z,i'_*\Or_W)-\chi(Z,\Or_Z) \notag \\
&=\chi(X,\Or_X)+\chi(W,\Or_W)-\chi(Z,\Or_Z). \notag
\end{align}
In conclusion, by combining \eqref{eq:Euler}, \eqref{eq:sum}, \eqref{eq:Euler_W}
\eqref{eq:finite_support} and \eqref{eq:Euler_gives_cid} we obtain
\begin{equation}\label{eq:Euler-degree}
\chi(X,\omega_{X/\K})-\chi(X,\Or_X)=\deg(X)(d_1+\cdots+d_{n-1}-n-1)-I(X,W).
\end{equation}

Because $X$ is Cohen-Macaulay by \cite[\href{https://stacks.math.columbia.edu/tag/0BS5}{Tag 0BS5}]{St} we have
$\chi(X,\omega_{X/\K})=-\chi(X,\Or_X).$
By definition $\chi(X,\Or_X)=1-p_a(X)$. Therefore, (\ref{eq:Euler-degree}) yields (\ref{eq:arithmetic genus}):

\[p_a(X)=1+\frac{\deg(X)(d_1+\cdots+d_{n-1}-n-1)-I(X,W)}{2}.\]

When $X$ is a Cohen--Macaulay curve that is generically a complete intersection, 
the existence of a complete intersection $Z$ with the desired properties is established in Section~\ref{construction}. 
This yields \eqref{eq:arithmetic genus}.
When $X$ is smooth, Proposition \ref{jacobian conservation} \rm{(i)} implies \eqref{eq:cid-jacobian}. We can then compute $g_X$ 
by applying the formula linking the arithmetic and geometric genera of curves (see
\cite[Ch.\ IV, §1,2]{Se} and \cite[Thm. 2, p.\ 190]{Hir57}) to obtain (\ref{eq:genus_X}).
\begin{rke}\label{Plücker} Assume $\mathrm{char}(\K)=0$. We say that $H \subset \mathbb{P}^n$ is a {\it tangent hyperplane} to a smooth point $x \in X$ if $H$ contains the tangent space $T_{X,x}$ of $X$ at $x$. Hyperplanes in $\mathbb{P}^n$ are naturally identified with points in the dual projective space $\check{\mathbb{P}^n}$. Consider the conormal variety $C(X):=\overline{\{(x,H)|x \in X_{\mathrm{sm}}, T_{X,x} \subset H\}} \subset X \times \check{\mathbb{P}^n}$. The projection  $\check{X}$  of $C(X)$ onto the second factor is called the {\it dual variety} of $X$. Because $X$ is a curve, $\check{X}$ is a hypersurface in $\check{\mathbb{P}^n}$. Its degree is called {\it the class} of $X$. The following Plücker formula for the class of $X$ is derived in \cite{RT} 

$$\mathrm{deg}(\check{X})=(d_1+ \cdots + d_{n-1} - n+1)\mathrm{deg}(X)-I(X,W)-\sum_{x \in X_{\mathrm{sing}}}(\mu_x+m_x-1).$$

Combining this formula with (\ref{eq:arithmetic genus}) the second author and Teissier deduced the identity

$$\mathrm{deg}(\check{X})-2\mathrm{deg}(X)=2p_a(X)-2-\sum_{x \in X_{\mathrm{sing}}}(\mu_x+m_x-1),$$
where the two terms on the right-hand side of the formula 
represent a global intrinsic invariant of $X$ and local intrinsic invariants of the singularities of $X$.
\end{rke}


\subsection{Two corollaries}

Note that $I(X,W)=I(W,X)$. A direct consequence of \eqref{eq:arithmetic genus} is
the genus difference formula for directly linked curves \cite[Proposition 3.1]{PS}.

\begin{cor}\label{PS}
Suppose two reduced curves $X$ and $W$ are directly linked via complete intersection
$Z\subset\Pj^n_\K$ cut out by homogeneous equations of degrees
$d_1,\ldots,d_{n-1}$. Then we have the following relation between arithmetic genera and
degrees
\[p_a(X)-p_a(W)=(\deg(X)-\deg(W))\frac{(d_1+\cdots+d_{n-1}-n-1)}{2}.\]
\end{cor}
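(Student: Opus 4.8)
The plan is to apply the arithmetic genus formula \eqref{eq:arithmetic genus} separately to $X$ and to $W$ and then subtract; the discrepancy terms will cancel by symmetry, leaving only the degree difference. The first thing I would verify is that both curves satisfy the hypotheses of Theorem \ref{thm:degree}. Since $X$ and $W$ are reduced, they satisfy Serre's condition $S_1$ and hence have no embedded points, so for one-dimensional schemes they are Cohen--Macaulay; being reduced, the local ring at each generic point is a field and thus trivially a complete intersection, so each curve is generically a complete intersection. Both hypotheses of \eqref{eq:arithmetic genus} therefore hold for $X$ and for $W$, and in each case the curve agrees with $Z$ at the generic point of each of its components, since $X$ and $W$ share no component.

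The second, and really the only structural, point is that direct linkage is symmetric: $Z$ exhibits $W$ as the curve whose residual $\overline{Z\setminus W}$ is $X$. This is the reflexivity of direct linkage through a Gorenstein scheme, namely $\Id_X=(0:_{\Or_Z}(0:_{\Or_Z}\Id_X))=(0:_{\Or_Z}\Id_W)$, which holds because $Z$ is a complete intersection and $\Id_X$ is unmixed in $\Or_Z$ (as $X$ is Cohen--Macaulay of pure dimension one). Consequently \eqref{eq:arithmetic genus} applies to $W$ with the same complete intersection $Z$, hence the same degrees $d_1,\dots,d_{n-1}$ and the same quantity $\sigma:=d_1+\cdots+d_{n-1}-n-1$, with the roles of $X$ and $W$ interchanged.

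With these checks in place the computation is immediate. The formula \eqref{eq:arithmetic genus} gives
\[
p_a(X)=1+\frac{\sigma\deg(X)-I(X,W)}{2},\qquad
p_a(W)=1+\frac{\sigma\deg(W)-I(W,X)}{2},
\]
and subtracting these, together with the symmetry $I(X,W)=I(W,X)$ already noted above, the two discrepancy terms cancel and I am left with
\[
p_a(X)-p_a(W)=\frac{\sigma\bigl(\deg(X)-\deg(W)\bigr)}{2},
\]
which is exactly the claimed relation. I do not anticipate any genuine obstacle: the numerical content is supplied entirely by \eqref{eq:arithmetic genus}, and the only liaison-theoretic input is the symmetry of $I(X,W)$, itself immediate from the symmetric form $\Or_Z/(\Id_X+\Id_W)$ of the discrepancy and the reflexivity of direct linkage recalled above.
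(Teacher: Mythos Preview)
Your proof is correct and follows exactly the approach the paper takes: apply \eqref{eq:arithmetic genus} to $X$ and to $W$ separately, then subtract and use the symmetry $I(X,W)=I(W,X)$. The paper states this in one line just before the corollary; your additional verification that reduced curves meet the hypotheses of Theorem~\ref{thm:degree} and that direct linkage is symmetric is more explicit than what the paper writes, but the argument is the same.
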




Finally, we show that the complete intersection discrepancy is constant under flat deformations. Let $X$ be a Cohen--Macaulay curve and $\mathcal{X} \rightarrow T$ be a flat morphism with $T$ connected, $\mathcal{X} \subset \mathbb{P}_{\K}^n \times T$, and $\mathcal{X}_{t_0}=X$ for a closed point $t_0 \in T$. Assume $\mathcal{Z} \rightarrow T$ with $\mathcal{Z} \subset \mathbb{P}_{\K}^n \times T$ is a flat family of complete intersection curves such that $X_t$ is a union of irreducible components of $Z_t$ for each closed point $t \in T$. Set $\mathcal{W}_t:=\overline{\mathcal{Z}_t\setminus \mathcal{X}_t}$. 

\begin{cor}\label{const. cid} We have $t \rightarrow I(\mathcal{X}_t,\mathcal{W}_t)$ is constant for $t \in T$.
\end{cor}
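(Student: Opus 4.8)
The plan is to apply the arithmetic genus formula \eqref{eq:arithmetic genus} fiber by fiber and then reduce the whole statement to the invariance of the Hilbert polynomial in a flat family. For each closed point $t\in T$ the fiber $\mathcal{X}_t$ is a Cohen--Macaulay curve which is generically a complete intersection inside $\mathcal{Z}_t$ (since $\mathcal{X}_t$ is a union of irreducible components of $\mathcal{Z}_t$, the two agree at the generic point of each such component), and $\mathcal{W}_t=\overline{\mathcal{Z}_t\setminus\mathcal{X}_t}$. Writing $d_1,\ldots,d_{n-1}$ for the degrees of the equations cutting out $\mathcal{Z}_t$, Theorem \ref{thm:degree} applied to the pair $(\mathcal{X}_t,\mathcal{Z}_t)$ yields
\begin{equation*}
I(\mathcal{X}_t,\mathcal{W}_t)=(d_1+\cdots+d_{n-1}-n-1)\deg(\mathcal{X}_t)-2\bigl(p_a(\mathcal{X}_t)-1\bigr).
\end{equation*}
It therefore suffices to show that each of the three quantities $d_1+\cdots+d_{n-1}$, $\deg(\mathcal{X}_t)$ and $p_a(\mathcal{X}_t)$ is independent of $t$.

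First I would invoke flatness: since $\mathcal{X}\to T$ is flat and proper with $T$ connected, the Hilbert polynomial $P_{\mathcal{X}_t}(m)=\chi(\mathcal{X}_t,\mathcal{O}_{\mathcal{X}_t}(m))$ is the same for all $t$. For a one-dimensional subscheme of $\Pj^n_\K$ one has $P_{\mathcal{X}_t}(m)=\deg(\mathcal{X}_t)\,m+\bigl(1-p_a(\mathcal{X}_t)\bigr)$, so the leading coefficient $\deg(\mathcal{X}_t)$ and the constant term $1-p_a(\mathcal{X}_t)$ are both constant in $t$. The same argument applied to the flat family $\mathcal{Z}\to T$ shows that the Hilbert polynomial of $\mathcal{Z}_t$ is constant; since the Hilbert polynomial of a complete intersection of type $(d_1,\ldots,d_{n-1})$ in $\Pj^n_\K$ determines $\prod_i d_i$ and $\sum_i d_i$, the quantity $d_1+\cdots+d_{n-1}$ does not depend on $t$ either. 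Substituting into the displayed identity shows that $t\mapsto I(\mathcal{X}_t,\mathcal{W}_t)$ is constant.

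The Hilbert-polynomial computations are routine; the one point requiring care is checking that the hypotheses of Theorem \ref{thm:degree} really hold at every fiber, i.e.\ that each $\mathcal{X}_t$ is Cohen--Macaulay and generically a complete intersection in $\mathcal{Z}_t$, so that \eqref{eq:arithmetic genus} may be invoked uniformly in $t$. The second and third conditions are built into the hypotheses, and I would spell out why the Cohen--Macaulay condition propagates through the connected family. If one wishes to avoid even this, an alternative route is to start from the purely homological identity \eqref{eq:Euler_gives_cid}, which holds with no Cohen--Macaulay assumption and gives
\begin{equation*}
I(\mathcal{X}_t,\mathcal{W}_t)=\chi(\mathcal{X}_t,\mathcal{O}_{\mathcal{X}_t})+\chi(\mathcal{W}_t,\mathcal{O}_{\mathcal{W}_t})-\chi(\mathcal{Z}_t,\mathcal{O}_{\mathcal{Z}_t});
\end{equation*}
here the first and third terms are constant by flatness of $\mathcal{X}$ and $\mathcal{Z}$, and the remaining task is to show the residual family $\mathcal{W}\to T$ is flat, which follows from standard linkage theory for the Gorenstein family $\mathcal{Z}$. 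The hard part of this second approach is precisely the flatness of $\mathcal{W}$, so I would favor the first route, which only ever refers to the flat families $\mathcal{X}$ and $\mathcal{Z}$ and never to $\mathcal{W}$.
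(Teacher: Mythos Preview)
Your proposal is correct and follows essentially the same route as the paper: apply \eqref{eq:arithmetic genus} fiberwise and use constancy of the Hilbert polynomial in a flat family (the paper cites \cite[Ch.\ III, Cor.\ 9.10]{HaAG}) to conclude that $p_a(\mathcal{X}_t)$, $\deg(\mathcal{X}_t)$ and the degree data of $\mathcal{Z}_t$ are constant. Your write-up is in fact more careful than the paper's one-line proof---you spell out how the Hilbert polynomial of the complete intersection $\mathcal{Z}_t$ determines $\sum_i d_i$, and you flag the Cohen--Macaulay hypothesis on the fibers; the alternative route via \eqref{eq:Euler_gives_cid} is a nice addition not present in the paper.
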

\begin{proof} By \cite[Ch.\ III, Cor.\ 9.10]{HaAG}, $p_{a}(\mathcal{X}_t)$, $\mathrm{deg}(\mathcal{Z}_t)$ and $\mathrm{deg}(\mathcal{X}_t)$ are constant, and so Theorem \ref{thm:degree} (\ref{eq:arithmetic genus}) yields the desired result.
\end{proof}

\section{The complete intersection discrepancy}\label{sec:complete_int}



\subsection{Construction of a complete intersection curve}\label{construction}
Let $X\subset\Pj^n_\K$ be a Cohen--Macaulay projective curve. Let $I_X=(f_1,\ldots,f_r)\subset
S=k[x_0,\ldots,x_n]$ be the homogeneous ideal of $X$, and $\mathcal{I}_{X}$ the associated ideal
sheaf on $\mathbb{P}^n_\K$. Recall that any homogeneous prime $\p$ of $S$, corresponds to a point
$\eta$ in $\mathbb{P}^n_\K$, and we have $(\mathcal{I}_X)_\eta=(I_X)_{(\p)}$, where $(-)_{(\p)}$
denotes the degree zero part of the localization at $\p$.
Set $d_i=\deg(f_i)$. Assume $d_1\geqslant d_2\geqslant\ldots\geqslant d_r$.

We henceforth assume that $X$ is generically a complete intersection. By this we mean that, for
each generic point $\eta$ of an irreducible component of $X$, the ideal $(\mathcal{I}_{X})_\eta$
is generated by $n-1$ elements in the local ring $\Or_{\mathbb{P}^n_\mathbbm{k},\eta}$.
Our goal is to construct efficiently, using prime avoidance and linear algebra,
a complete intersection $Z=\mathbb{V}(F_1,\ldots,F_{n-1})$, where $F_i\in (I_X)_{d_i}$ and $Z=X$ at the generic points $\eta$.

Such constructions have been carried out in \cite[Proposition 3.7]{PS}, \cite[Theorem. 4.4]{CU}
and \cite[Proposition 2.5]{HU} via different methods. Our goal is to exert explicit control on the type
of ``homogenized'' linear combinations of the $f_1,\ldots,f_r$  that give the equations of $Z$.

If $n=2$, then $X$ is a Cohen--Macaulay plane curve. By
\cite[\href{https://stacks.math.columbia.edu/tag/0BXL}{Tag 0BXL}]{St} $I_X$ is principal.
So we set $Z=X$ and we are done.

Suppose $n \geq 3$. We are going to select $n-2$ linear forms $\ell_1,\ldots,
\ell_{n-2}\in S(\Pj_{\K}^n)_1$ and constants $b_{i,j}\in\K,\,1\leqslant i
\leqslant n-1,\,1\leqslant j\leqslant r$ subject to certain general conditions
to be specified below. Select $\ell_1$ such that it avoids the minimal primes of $(f_1)$. Set
\[\Tilde{f}_1=b_{1,1}f_1+b_{1,2}\ell_1^{d_1-d_2}f_2+
\cdots+b_{1,k}\ell_1^{d_1-d_r}f_r.\]
Consider the ideal $I_1:=(f_2,
\ell_1^{d_2-d_3}f_3,\ldots,\ell_1^{d_2-d_r}f_r)$. Suppose there exists a
minimal prime  $\p_1$ of $(\Tilde{f}_1)$ that contains $I_1$. Then either $\p_1$ contains
$\ell_1$ and $f_1$ or $\p_1$ contains $(f_1,\ldots,f_r)=I_X$, which is impossible
because $\mathrm{ht}(\p_1)=1$. Then by prime avoidance there exist general
$b_{2,2},\ldots,b_{2,r}\in\K$ such that
\[\Tilde{f}_2=b_{2,2}f_2+b_{2,3}\ell_1^{d_2-d_3}f_3+\cdots+
b_{2,k}\ell_1^{d_2-d_r}f_r\]
avoids the minimal primes of $(\Tilde{f}_1)$. Select $\ell_2\in S(\Pj^n)_1$ such that
$\ell_2$ avoids the minimal primes of $(\Tilde{f}_1,\Tilde{f}_2)$. If $n=3$ we are
done. If $n>3$, consider the ideal $I_2=(f_3,\ell_2^{d_3-d_4}f_4,\ldots,
\ell_2^{d_3-d_r}f_r)$. Let $\p_2$ be a minimal prime of $(\Tilde{f}_1,\Tilde{f}_2)$.
If $I_2\subset\p_2$, then either $(\ell_2,\Tilde{f}_1,\Tilde{f}_2)\subset\p_2$ or
$I_X\subset\p_2$. Both cases are impossible, because $\mathrm{ht}(\p_2)=2$ and
$\mathrm{ht}(I_X)\geqslant3$. Thus there exist general $b_{3,3},\ldots,b_{3,r}
\in\K$ such that
\[\Tilde{f}_3=b_{3,3}f_3+b_{3,4}\ell_2^{d_3-d_4}f_4+\cdots+
b_{3,r}\ell_2^{d_3-d_r}f_r\]
avoids the minimal primes of $(\Tilde{f}_1,\Tilde{f}_2)$. Continuing this process
we obtain a complete intersection $Z=\mathbb{V}(\Tilde{f}_1,\ldots,
\Tilde{f}_{n-1})$. Note that $(\Tilde{f}_1,\ldots, \Tilde{f}_{n-1},f_{n}, \ldots,f_{r})=(f_{1},\ldots, f_{r})$.

By imposing additional genericity conditions on the coefficients $b_{i,j}$, we ensure that the resulting complete intersection 
$Z$ coincides with 
$X$ at the generic point of each irreducible component of  $X$. We then denote by 
$F_1, \ldots, F_{n-1}$ the defining equations of $Z$.

In the special case where $X$ is reduced, we can give a direct construction of $Z$ such that $Z$ is reduced along $X$; in particular, this guarantees $Z=X$ generically. Assume $X=\bigcup_{i=1}^s X_i$ is the
decomposition of $X$ into irreducible components. For each $i=1,\ldots,s$ select a smooth point
$z_i\in X_i$ such that $z_i\not \in\mathbb{V}(\ell_j)$ for each $j=1,\ldots,n-2$. Note that $J(X)$
is of maximal rank $n-1$ at each $z_i$ and by the Leibniz rule
\[
\scalemath{0.84}{
J(Z)|_X=\begin{pmatrix}
b_{1,1} & b_{1,2}\ell_1^{d_1-d_2} & b_{1,2}\ell_1^{d_1-d_3} &
    b_{1,3}\ell_1^{d_1-d_4} & \cdots & \cdots & \cdots &
    b_{1,r}\ell_1^{d_1-d_r} \\
0 & b_{2,2} & b_{2,3}\ell_1^{d_2-d_3} & b_{2,3}\ell_1^{d_2-d_4}
    & \cdots &  \cdots & \cdots & b_{2,r}\ell_1^{d_2-d_r} \\
0 & 0 & b_{3,3} & b_{3,4}\ell_2^{d_3-d_4} & \cdots & \cdots & \cdots &
    b_{3,r}\ell_2^{d_3-d_r} \\
\vdots & \vdots & \vdots & \ddots & & & & \vdots \\
0 &  0 & 0 & \cdots & b_{n-1,n-1} & b_{n-1,n}\ell_{n-2}^{d_{n-1}-d_n} & \cdots &
    b_{n-1,r}\ell_{n-2}^{d_{n-1}-d_r}
\end{pmatrix} J(X).
}\]

By an argument similar to Lemma \ref{lm:vary_CIA} using a generalized Cauchy-Binet formula, for
general $b_{i,j}$ the matrix $J(Z)$ evaluated at $z_i$ is of maximal rank $n-1$. In
particular, $Z$ is reduced along $X$. Note that when $X$ is affine or $X$ is projective and
defined by homogeneous equations of the same degree, then each $F_i$ can be chosen as a
general $\K$-linear combination of $f_1, \ldots, f_r$.

We now return to the general case and assume that $X$ is Cohen--Macaulay and generically a complete intersection. For every generic point $\eta$ of an irreducible component of $X$, we exhibit
a non-trivial genericity condition on the coefficients $b_{i,j}$, such that
$\mathcal{I}_{Z,\eta}=\mathcal{I}_{X,\eta}$.

Let $\p$ be the homogeneous prime ideal of $S=S(\mathbb{P}^n_\mathbbm{k})$ corresponding to $\eta$.
Thus $\mathcal{I}_{X,\eta}$ is isomorphic to $(I_X)_{(\p)}$, the degree zero component of the
localization $(I_X)_\p$. This ideal is generated by $\frac{f_1}{\varphi_1},\ldots,
\frac{f_r}{\varphi_r}$ for any $\varphi_j\in S(\mathbb{P}^n_\mathbbm{k})_{d_j}\setminus\p$.
Observe that the later set is never empty, otherwise $\p$ would be of height zero. By a similar
remark, it suffices to find a regular sequence $F_1,\ldots,F_{n-1}$, that generates $(I_X)_\p$,
so that $\frac{F_1}{\psi_1},\ldots,\frac{F_{n-1}}{\psi_{n-1}}$ generate $\mathcal{I}_{X,\eta}$,
for any $\psi_i\in S(\mathbb{P}^n_\mathbbm{k})_{d_i}\setminus\p$.

Define $\kappa:=\kappa(\p)=\mathrm{Frac}(S/\p)=S_\p/\p S_P$, the residual field of $\p$
and $V=I_X\otimes\kappa$. By Nakayama's lemma, $X$ being a complete intersection
at $\eta$ is equivalent to $\dim_{\kappa}V=n-1$. Write $\bar{f}_1,\ldots,\bar{f}_r$
and $\bar{F}_1,\ldots,\bar{F}_{n-1}$ for the respective images of the $f_j$ in $V$ and
$F_i$ in $V$. The $\bar{f}_j$ span a $(n-1)$-dimensional $\kappa$-vector space. Let
$E_1,\ldots,E_{n-1}$ and $e_1,\ldots,e_r$ be the canonical bases of $\kappa^{n-1}$ and
$\kappa^{r}$, respectively. Write $A:\kappa^{n-1}\to\kappa^{r}$ for the $\kappa$-linear map defined by
\[A(E_i)=\sum_{j=i}^r b_{i,j}\bar{\ell}_{k(i)}^{d_i-d_j}e_j,\]
where $k(i)=1$ if $i=1$  and $i-1$ otherwise and $\bar{\ell}_{k(i)}$ is the image of $\ell_{k(i)}$
inside $\kappa$. We identify $A$ with its matrix in the canonical bases.
Furthermore let $\Phi:\kappa^r\to V$ be the $\kappa$-linear map defined by
$\Phi(e_j)=\bar{f}_j$. Thus $\Phi$ is surjective and $(\Phi A)(E_i)=\bar{F}_i$. The $\bar{F}_i$
span $V$ iff $\Phi A$ is surjective. This is equivalent to $\wedge^{n-1}(\Phi A)$ being a
non-zero map. By functoriality $\wedge^{n-1}(\Phi A)=\wedge^{n-1}\Phi\wedge^{n-1}A$. For any subset
$\sigma=\{j_1,\ldots,j_{n-1}\}\subset\{1,\ldots,r\},\:j_1<\ldots<j_{n-1}$, write
$e_\sigma:=e_{j_1}\wedge\cdots\wedge e_{j_{n-1}}$ and similarly $\bar{f}_\sigma:=\bar{f}_{j_1}
\wedge\cdots\wedge\bar{f}_{j_{n-1}}$. Denote by $[A]_\sigma$ the minor of the matrix $A$ determined
by taking the $(n-1)$ rows of $A$ and its columns of indices $j_1,\ldots,j_{n-1}$. We thus obtain
\[(\wedge^{n-1}\Phi)(e_\sigma)=\bar{f}_\sigma,\quad\text{and}\quad
(\wedge^{n-1}A)(E_1\wedge\cdots\wedge E_{n-1})=
\sum_{\sigma\in\binom{[r]}{n-1}}[A]_\sigma e_\sigma.\]
Without loss of generality, suppose $\bar{f}_1,\ldots,\bar{f}_{n-1}$ form a basis of $V$. 
Write $\sigma_0=\{1,\ldots,n-1\}$. The vector space $\wedge^{n-1}V$ is one-dimensional and it is spanned
by $\bar{f}_{\sigma_0}$. Thus, for any $\sigma\in\binom{[r]}{n-1}$, there is a unique
$\beta_\sigma\in\kappa$ such that $\bar{f}_\sigma=\beta_\sigma\bar{f}_{\sigma_0}$.

In conclusion, $\wedge^{n-1}(\Phi A)$ is non-zero if and only if $\sum_{\sigma}[A]_{\sigma}\beta_\sigma\neq0$.
This determines a non-trivial polynomial condition on the $b_{i,j}$. 
By imposing this condition at the generic point of each irreducible component of $X$, we ensure that the desired additional genericity conditions hold for the chosen linear combinations, thereby obtaining $F_1,\ldots,F_{n-1}$.

\subsection{Computing the complete intersection discrepancy}
Preserve the setup from Section \ref{construction}. Denote by $\mathrm{Jac}(X)$ the Jacobian ideal of $X$ in $S(X)$, which is the first Fitting ideal of $\Omega_{S(X)/\K}^1$. Denote by $\mathrm{Jac}(Z)$ the image of the Jacobian ideal of $Z$ in $S(X)$. Set $\mathrm{Jac}(X,x):=\mathrm{Jac}(X)\mathcal{O}_{X,x}$ and $\mathrm{Jac}(Z,x):=\mathrm{Jac}(Z)\mathcal{O}_{X,x}$.
Set $$e(\mathrm{Jac}(X)):=\sum_{x \in X_{\mathrm{sing}}}e(\mathrm{Jac}(X,x)) \ \ \text{and} \ \ e(\mathrm{Jac}(Z)):=\sum_{x \in Z_{\mathrm{sing}}\cap X}e(\mathrm{Jac}(Z,x))$$
where $e(\mathrm{Jac}(Z,x))$ and $e(\mathrm{Jac}(X,x))$ are the corresponding Hilbert--Samuel multiplicities of the ideals $\mathrm{Jac}(Z,x)$ and $\mathrm{Jac}(X,x)$.

We say that $X$ is {\it smoothable} if there exists a flat morphism $s: \mathcal{X} \rightarrow T$ with $T$ an affine irreducible smooth curve over $\K$ and $\mathcal{X} \subset \mathbb{P}_{\K}^n \times T$ such that $\mathcal{X}_{t_0}=X$ for a closed point $t_0 \in T$ and $\mathcal{X}_t$ is smooth for all $t \in T$ with $t \neq t_0$. We will show below that $s$ induces a deformation of any choice of equations for $X \subset \mathbb{P}_{\K}^n$ which in turn induces an embedded deformation $\mathcal{Z} \rightarrow T$ of $\mathcal{Z}_{t_0}=Z$.

\begin{prop}\label{jacobian conservation}
Let $h \in \mathcal{O}_{\mathbb{P}_{\K}}^n(1)$  such that $\mathbb{V}(h)$ does not contain $Z_{\mathrm{sing}}$. The following holds:
\begin{itemize}
\item [\rm{(i)}] Assume $X$ is smooth.
Then
$$I(X,W)=\dim_{\K} S(X)_{(h)}/\Jac(Z).$$
\item [\rm{(ii)}] Assume $X$ is a reduced local complete intersection. Then 
$$
I(X,W)= e(\mathrm{Jac}(Z))-e(\mathrm{Jac}(X)).
$$
If $(b_{i,j})$ and $\ell_i$ are general, then $$I(X,W)=\sum_{x \in Z_{\mathrm{sing}} \cap X_{\mathrm{sm}}} \dim_{\K}\mathcal{O}_{X,x}/\mathrm{Jac}(Z,x).$$
\item [\rm{(iii)}] Assume $X$ is reduced and smoothable. Then 
$$
I(X,W)= \dim_{\K} S(\mathcal{X}_t)_{(h)}/\Jac(\mathcal{Z}_t)
$$
for $t \neq t_0$  in an affine neighborhood of $t_0$.
\item [\rm{(iv)}] Assume $X$ is a Cohen--Macaulay almost complete intersection curve that is generically a complete intersection. Then 
$$
I(X,W)= \Pi_{i=1}^n d_i-d_n\mathrm{deg}(X).
$$
\end{itemize}
\end{prop}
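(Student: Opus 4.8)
The four parts all rest on the local results of Section~\ref{sec:Milnor} together with the genus formula \eqref{eq:arithmetic genus}; only part~(iv) requires genuinely new input. For part~(i) the plan is to split $I(X,W)$ into local contributions and recognize them in the dehomogenized length. Since $X$ is smooth, \ref{smooth curve} gives $I_x(X,W)=\dim_\K\Or_{X,x}/\Jac(Z,x)$ at each point. I would first note that with $X$ smooth, $Z$ can be singular along $X$ only at crossings with $W$: if $x\in X\setminus W$ then $Z=X$ near $x$ and $Z$ is smooth there, so $Z_{\mathrm{sing}}\cap X=X\cap W$. The ideal $\Jac(Z)$ in $S(X)_{(h)}$ then cuts out a finite scheme supported on $(Z_{\mathrm{sing}}\cap X)\setminus\mathbb{V}(h)$, which is all of $Z_{\mathrm{sing}}\cap X$ because $\mathbb{V}(h)$ misses $Z_{\mathrm{sing}}$ by hypothesis. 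Localizing the Artinian quotient $S(X)_{(h)}/\Jac(Z)$ at its finitely many support points yields $\dim_\K S(X)_{(h)}/\Jac(Z)=\sum_{x\in X\cap W}\dim_\K\Or_{X,x}/\Jac(Z,x)=I(X,W)$. Part~(ii) runs identically using \ref{lci} in place of \ref{smooth curve}: since $X$ is lci (in particular at smooth points), $I_x(X,W)=e(\Jac(Z,x))-e(\Jac(X,x))$ for every $x\in Z_{\mathrm{sing}}\cap X$, with $e(\Jac(X,x))=0$ at smooth points; summing over $Z_{\mathrm{sing}}\cap X\supseteq X_{\mathrm{sing}}$ and discarding the points $x\notin W$ (where $Z=X$ locally, so the two multiplicities agree and $I_x=0$) gives $I(X,W)=e(\Jac(Z))-e(\Jac(X))$. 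For the refined formula I would invoke \ref{thm:rho_to_multiplicity}: for general $Z$ one has $e(\Jac(Z,x))=e(\Jac(X,x))$, so by \ref{lci} the discrepancy vanishes at every singular point of $X$, whence $X\cap W\subseteq X_{\mathrm{sm}}$ and in fact $X\cap W=Z_{\mathrm{sing}}\cap X_{\mathrm{sm}}$, where \ref{smooth curve} applies termwise.

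For part~(iii) the plan is to run the construction of Section~\ref{construction} in the family, keeping the coefficients $b_{i,j}$ and linear forms $\ell_i$ fixed, producing a flat family $\mathcal{Z}\to T$ of complete intersections of constant multidegree containing $\mathcal{X}_t$. Then \ref{const. cid} shows $t\mapsto I(\mathcal{X}_t,\mathcal{W}_t)$ is constant. For $t\neq t_0$ the fibre $\mathcal{X}_t$ is smooth, so part~(i) computes $I(\mathcal{X}_t,\mathcal{W}_t)=\dim_\K S(\mathcal{X}_t)_{(h)}/\Jac(\mathcal{Z}_t)$; by constancy this equals $I(X,W)$. The only care needed is to shrink $T$ to an affine neighborhood of $t_0$ on which $\mathbb{V}(h)$ still avoids $(\mathcal{Z}_t)_{\mathrm{sing}}$, an open condition.

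Part~(iv) is the crux. For an aci curve the construction of Section~\ref{construction} gives $I_X=I_Z+(f_n)$, so $X=Z\cap\mathbb{V}(f_n)$ scheme-theoretically, and the plan is to reduce $I(X,W)$ to a Bézout count of $W$ against the hypersurface $\mathbb{V}(f_n)$. Writing $A=\Or_Z$ and $\bar f$ for the image of $f_n$, linkage gives $\Id_W\Or_Z=(0:_A\bar f)$, so locally
\[
\Or_{X\cap W,x}=\Or_{Z,x}/\big((\bar f)+(0:_{\Or_{Z,x}}\bar f)\big)=\Or_{W,x}/\bar f\,\Or_{W,x},
\]
whence $I_x(X,W)=\mathrm{length}(\Or_{W,x}/f_n)$. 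The identity $X=Z\cap\mathbb{V}(f_n)$ forces $W\cap\mathbb{V}(f_n)\subseteq W\cap X$ to be finite, so $\mathbb{V}(f_n)$ contains no component of $W$; and because $W$ is linked to the Cohen--Macaulay scheme $X$ by the complete intersection $Z$, $W$ is again Cohen--Macaulay, so $f_n$ is a nonzerodivisor on each $\Or_{W,x}$ and $\mathrm{length}(\Or_{W,x}/f_n)$ is exactly the intersection multiplicity $i(x;W,\mathbb{V}(f_n))$. Summing and applying Bézout gives $I(X,W)=d_n\deg(W)$. Finally, the cycle decomposition $[Z]=[X]+[W]$ (the multiplicities add at each generic point, since $Z$ is reduced along $X$) yields $\deg(W)=\deg(Z)-\deg(X)=\prod_{i=1}^{n-1}d_i-\deg(X)$, so
\[
I(X,W)=d_n\Big(\textstyle\prod_{i=1}^{n-1}d_i-\deg(X)\Big)=\prod_{i=1}^{n}d_i-d_n\deg(X).
\]

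The main obstacle is precisely the Cohen--Macaulayness of $W$ used in part~(iv). Without it, $f_n$ could be a zerodivisor on some $\Or_{W,x}$ (an embedded point of $W$ lying on $\mathbb{V}(f_n)$), in which case $\mathrm{length}(\Or_{W,x}/f_n)$ would strictly exceed the intersection multiplicity $i(x;W,\mathbb{V}(f_n))$ and the clean Bézout count would fail. I would secure this by citing the standard fact from linkage theory that residuation in a Gorenstein (here complete intersection) scheme preserves the Cohen--Macaulay property, drawing on the hypothesis that $X$ is Cohen--Macaulay; the remaining degree bookkeeping is then routine.
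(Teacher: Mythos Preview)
Your arguments for parts (i), (ii), and (iv) are correct and follow the paper's approach closely. In (iv) you are in fact more careful than the paper's terse first proof: the paper simply writes $I(X,W)=V(f_n)\cdot[W]$, whereas you justify why the naive length $\mathrm{length}(\Or_{W,x}/f_n)$ agrees with the intersection multiplicity by invoking the Cohen--Macaulayness of $W$ via linkage. This is a genuine point and you handle it well.

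The gap is in part (iii). You write that the plan is to ``run the construction of Section~\ref{construction} in the family, keeping the coefficients $b_{i,j}$ and linear forms $\ell_i$ fixed.'' But the smoothing $\mathcal{X}\to T$ is handed to you as an abstract flat family, defined by some equations $g_1(\mathbf{x},\mathbf{y}),\ldots,g_m(\mathbf{x},\mathbf{y})$ that need not specialize at $t_0$ to the particular generators $f_1,\ldots,f_r$ of $I_X$ used to build $Z$. Before you can form $\mathcal{Z}$ by taking the same linear combinations, you must show that the $f_i$ lift to elements $f_i(\mathbf{x},\mathbf{y})\in I_{\mathcal{X}}$ which \emph{generate} $I_{\mathcal{X}}$ over some neighborhood of $t_0$. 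This is the main technical content of the paper's proof: one writes each $f_i$ as a combination of the $g_j(\mathbf{x},0)$, lifts the coefficients arbitrarily, and then uses a Nakayama-type argument (flatness makes $t$ a nonzerodivisor on $\Or_{\mathcal{X}}$) together with properness of $\mathbb{P}^n_\K\times T\to T$ to conclude that the lifted $f_i(\mathbf{x},\mathbf{y})$ generate $I_{\mathcal{X}}$ after shrinking $T$. Only then is $\mathcal{Z}:=\mathbb{V}(F_1(\mathbf{x},\mathbf{y}),\ldots,F_{n-1}(\mathbf{x},\mathbf{y}))$ well-defined as a family deforming $Z$.

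You also omit a second verification: to apply part~(i) at $t\neq t_0$ you need $\mathcal{Z}_t$ to be reduced along $\mathcal{X}_t$ (equivalently, $\mathcal{X}_t$ and $\mathcal{Z}_t$ to agree at the generic points of $\mathcal{X}_t$). The paper checks this by observing that each irreducible component $\mathcal{Z}_1$ of $\mathcal{Z}$ containing a component of $\mathcal{X}$ is flat over $T$ with reduced special fibre, hence has reduced nearby fibres by openness of reducedness in flat families. Your phrase ``an open condition'' covers the hyperplane $\mathbb{V}(h)$ avoiding $(\mathcal{Z}_t)_{\mathrm{sing}}$, but not this reducedness step.
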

\begin{proof}
Consider $\rm{(i)}$. Observe that $$\dim_{\K} S(X)_{(h)}/\Jac(Z)=\sum_{x \in X \cap W}
\dim_{\K}\mathcal{O}_{X,x}/\mathrm{Jac}(Z,x).$$ By definition
$I(X,W)=\sum_{x \in X \cap W}I_{x}(X,W)$. By Corollary \ref{smooth curve} we have
$I_{x}(X,W)=\dim_{\K}\mathcal{O}_{X,x}/\mathrm{Jac}(Z,x)$.  By combining the last three
identities we obtain the desired result. 

Consider $\rm{(ii)}$. Suppose $x \in X_{\mathrm{sing}}$. Then
$I_{x}(X,W)=e(\mathrm{Jac}(Z,x))-e(\mathrm{Jac}(X,x))$ by Corollary \ref{lci}. Suppose
$x \in X_{\mathrm{sm}}$. Then $e(\mathrm{Jac}(Z,x))=
\dim_{\K}\mathcal{O}_{X,x}/\mathrm{Jac}(Z,x)$ and $e(\mathrm{Jac}(X,x))=0$.
Thus by Corollary \ref{smooth curve} $I_{x}(X,W)=e(\mathrm{Jac}(Z,x))$.
If the $\ell_i$ are chosen so that $\mathbb{V}(\ell_i) \cap X_{\mathrm{sing}} = \emptyset$ and
the $(b_{i,j})$ are general so that $\mathrm{Jac}(Z,x)$ and $\mathrm{Jac}(X,x)$ have the same
integral closure for each $x \in X_{\mathrm{sing}}$ (see \cite[Proposition 2.3]{BGR}), then
$e(\mathrm{Jac}(Z,x))-e(\mathrm{Jac}(X,x))=0$ for $x \in X_{\mathrm{sing}}$ which proves \rm{(ii)}.

Consider $\rm{(iii)}$. Note that $Z$ is constructed using a particular choice of equations for $X \subset \mathbb{P}_{\K}^n$. We want to show that the smoothing $s: \mathcal{X} \rightarrow T$ induces a deformation of this particular choice of equations of $X$, and therefore $s$ gives rise to an embedded deformation $\mathcal{Z} \rightarrow T$ of $Z$ such that $\mathcal{Z}_t$ is reduced along $\mathcal{X}_t$.  

Assume $T \subset \mathbb{A}_{\K}^l$ and for convenience, after linear change of coordinates, assume that $t_0=0$. Let $g_1({\bf x},{\bf y}), \ldots, g_m({\bf x},{\bf y})$ be the defining equations for $\mathcal{X} \subset \mathbb{P}_{\K}^n \times \mathbb{A}_{\K}^l$ where ${\bf x}$ and ${\bf y}$ are projective and affine coordinates, respectively. As before assume that $f_1({\bf x}), \ldots, f_{k}({\bf x})$ are equations for $X \subset \mathbb{P}_{\K}^n$. For $i=1, \ldots, k$ set $g_{i}({\bf x}):=g_{i}({\bf x, y})_{|y=0}$. Because $g_1({\bf x}), \ldots, g_m({\bf x})$ generate the ideal of $X$, for each $i=1, \ldots, k$ we can write $f_{i}({\bf x})=\sum_{j=1}^{m}r_{i,j}({\bf x})g_{j}({\bf x})$. Pick $r_{i,j}({\bf x, y}) \in \K[{\bf x}] \otimes \K[{\bf y}] $, for $j=1, \ldots, m$ so that $r_{i,j}({\bf x, y})_{|y=0}=r_{i,j}({\bf x})$. Write $f_i({\bf x,y}): = \sum_{j=1}^{m}r_{i,j}({\bf x, y})g_{j}({\bf x, y})$. Define the ideals $I_1:= \langle g_1({\bf x},{\bf y}), \ldots, g_m({\bf x},{\bf y}) \rangle$ and $I_2:= \langle f_1({\bf x,y}), \ldots, f_k({\bf x,y}) \rangle$. We have $I_2 \subseteq I_1$. 
Identify $I_1$ and $I_2$ with their images in $\mathcal{O}_{\mathbb{P}_{\K}^n \times T}$.
We want to show that $I_2=I_1$ after possibly replacing $T$ by a smaller affine neighborhood of $t_0=0$. 

First, replace $T$ by an affine neighborhood of $0$ if necessary so that $\mathbb{P}_{\K}^n \times \{0\}$ is a principal Cartier divisor in $\mathbb{P}_{\K}^n \times T$ defined by the vanishing of some $t \in \mathcal{O}_T$. Because of flatness $t$ is a nonzerodivisor of $\mathcal{O}_{\mathbb{P}_{\K}^n \times T}/I_1$. Thus the image of $I_2$ in $\mathcal{O}_{\mathbb{P}_{\K}^n}$ is the same as the image of $I_2$ in $I_1/tI_1$. But the images of $I_1$ and $I_2$ in $\mathcal{O}_{\mathbb{P}_{\K}^n}$ are equal. Thus $(I_1/I_2) /t(I_1/I_2)=0$. Set $V:=\mathrm{Supp}_{\mathbb{P}_{\K}^n \times T}(I_1/I_2)$. Because $I_1/I_2$ is a coherent $\mathcal{O}_{\mathbb{P}_{\K}^n \times T}$-module, $V$ is closed in $\mathbb{P}_{\K}^n \times T$. Consider the proper morphism $\pi:\mathbb{P}_{\K}^n \times T \rightarrow T$. Then $\pi (V)$ is a closed subset in $T$. 
But $V$ does not contain $\mathbb{P}_{\K}^n \times \{0\}$.
Thus $\pi (V)$ does not contain $t_0=0$ and so $\pi (V)$ is a finite set of points. Therefore, by replacing $T$ by an affine neighborhood of $t_0$ we have $I_1=I_2$. So we can assume that $\mathcal{X}$ is defined in $\mathbb{P}_{\K}^n \times T$ by $f_1({\bf x,y}), \ldots, f_k({\bf x,y})$. 

Define $\mathcal{Z}$ in $\mathbb{P}_{\K}^n \times T$ by replacing in the definition of $Z$ in Section \ref{construction} $f_{i}({\bf x})$ by $f_{i}({\bf x, y})$ for $i=1, \ldots k$. Because $\mathcal{Z}$ is cut out by $n-1$ equations in $\mathbb{P}_{\K}^n \times T$, by Krull's height theorem each of its irreducible components is of dimension at least $2$. Obviously, $\mathcal{Z}_{t_0}=Z$. So by upper semicontinuity of fiber dimension $\mathcal{Z}$ is of pure dimension $2$ in $\mathbb{P}_{\K}^n \times T$. Therefore, $\mathcal{Z}$ is a complete intersection in $\mathbb{P}_{\K}^n \times T$. Clearly, $\mathcal{X}$ satisfies Serre's $S_1$ and $R_0$ conditions, so $\mathcal{X}$ is reduced. Let $\mathcal{X}_1$ be an irreducible component of $\mathcal{X}$. Assume $\mathcal{X}_1$ is a subscheme of an irreducible component $\mathcal{Z}_1$ of $\mathcal{Z}$ ($\mathcal{X}_1$ is the reduction of $\mathcal{Z}_1$). Because $T$ is an irreducible smooth curve, $\mathcal{Z}_1 \rightarrow T$ is flat. Moreover, $(\mathcal{Z}_1)_{t_0}$ is reduced because $\mathcal{Z}_{t_0}=Z$ is reduced by construction. By \cite[\href{https://stacks.math.columbia.edu/tag/0C0E}{Tag 0C0E}]{St} $(\mathcal{Z}_1)_t$ is reduced for $t$ in an affine neighborhood of $t_0$. Thus, after replacing $T$ by an affine neighborhood of $t_0$, we may assume that $\mathcal{Z}_t$ is reduced along $\mathcal{X}_t$ and that $\mathbb{V}(h)$ misses the singular points of $\mathcal{Z}_t$ for each $t \in T$. Set $\mathcal{W}_t:=\overline{\mathcal{Z}_t\setminus \mathcal{X}_t}$. By Corollary \ref{const. cid} we have $I(X,W)=I(\mathcal{X}_{t_0},\mathcal{W}_{t_0})=I(\mathcal{X}_t,\mathcal{W}_t)$ for $t \in T$. By Proposition \ref{jacobian conservation} \rm{(i)} $I(\mathcal{X}_t,\mathcal{W}_t)= \dim_{\K} S(\mathcal{X}_t)_{(h)}/\Jac(\mathcal{Z}_t)$. The proof of  $\rm{(iii)}$ is now complete. 

Consider $\rm{(iv)}$. Set $l:=c_{1}(\mathcal{O}_{\mathbb{P}^n}(1))$. Then  $I(X,W)=V(f_n)\cdot[W]=d_n l\cdot[W]=d_n\deg{W}$, where $[W]$ is the fundamental cycle of $W$. But $$\deg(W)=\deg(Z)-\deg(X)=\Pi_{t=1}^{n-1} d_i-\deg(X).$$ Thus $I(X,W)=\Pi_{i=1}^n d_i - d_n \deg(X)$ as desired. 

Alternatively, let $h \in \mathcal{O}_{\mathbb{P}^n}(1)$ such that $H=\mathbb{V}(h)$ does not contain an irreducible component of $Z$. 
Consider the trivial one-parameter family $W \times  \mathbb{A}_{\K}^1 \rightarrow  \mathbb{A}_{\K}^1$ with $\mathbb{A}_{\K}^1:=\mathrm{Spec}(\K[t])$.
Set $D_n:=\mathbb{V}(f_n+th^{d_n}) \subset \mathbb{P}_{\K}^n \times \mathbb{A}_{\K}^1$. For each closed point $t' \in \mathbb{A}_{\K}^1$ denote by $D_n(t')$ the fiber of  $D_n$ over $t'$. After possibly replacing $\mathbb{A}_{\K}^1$ by a small affine neighborhood $T$ of $0$ we can assume that $D_n \cap (W \times T)$ is Cohen--Macaulay. 

Consider the $1$-cycle $D_n\cdot[W\times T]$ in $A_{1}(W\times T)$ ($1$-cycles modulo rational equivalence). By conservation of number \cite[Prop.\ 10.2]{Ful} applied to  $D_n\cdot[W\times T]$ and the proper map $W\times T \rightarrow T$ we obtain the following equality of degrees 
\begin{equation}\label{moving}
\int D_n(t')\cdot[W]=\int D_n(0)\cdot[W]=I(X,W)
\end{equation}
for $t' \neq 0$. Next, consider the equality of fundamental cycles $[Z]=[X]+[W]$. For $t' \neq 0$ general, intersecting with $D_n(t')$ gives
\begin{equation}\label{Cartier-fund.}
D_n(t')\cdot[Z]=D_n(t')\cdot[X]+D_n(t')\cdot[W]
\end{equation}
By definition, $D_n(t') \cdot [X]$ is a $0$-cycle with $\int D_n(t') \cdot [X] = d_n \, \deg(X).$
Since $D_n \cap Z$ is a complete intersection, Bezout's theorem gives $\int D_n(t')\cdot[Z]=\Pi_{i=1}d_i$. Combining this with (\ref{Cartier-fund.}) and (\ref{moving}) yields
$$\Pi_{i=1}^n d_i = d_n \deg(X) + I(X,W)$$
proving $\rm{(iv)}$. This second approach to proving (\ref{aci genus}) can be adapted to the local setup as shown below in Proposition \ref{LGT-aci}.
\end{proof}

\begin{ex}
Let us illustrate  Proposition \ref{jacobian conservation} \rm{(i)} with the twisted cubic. This is a classic example of a
space curve that is not a complete intersection. It is cut out from  $\Pj^3_\K$ by the equations
\[f_1=x_{2}^2-x_{1}x_{3},\:f_2=x_{1}^2-x_{0}x_{2},\:f_3=x_{0}x_{3}-x_{1}x_{2}.\]

A general  choice for $Z$ is given by $Z=\mathbb{V}(f_1,f_2+2f_3)$. A direct computation for the image of $\mathrm{Jac}(Z)$ in $S(X)_{(x_0)}$ gives $\mathrm{Jac}(Z)=(x_2-2x_3,x_1-4x_3,8x_{3}^2-x_3)$. So the subscheme in $\Pj^3_\K$ defined by $\mathrm{Jac}(Z)$ is the two reduced points $P_1=[1:0:0:0]$ and $P_2=[8:4:2:1]$. Thus $I(X,W)=2$. In fact, one can compute that $W=\mathbb{V}(y-4w,z-2w)$ and that this line intersects locally transversally $X$ in the two points $P_1$ and $P_2$.

We can choose a linear combination for the equations of the complete intersection so that the points of intersection of $X$ and $W$ are not ordinary double points of $Z$. Consider the family of complete intersections $Z_a=\mathbb{V}(f_1,sf_2+tf_3)$ with $a:=[s:t]\in
\Pj^1_\K$. Set $W_a=\overline{Z_a\setminus X}$. A direct computation shows that there are two distinct points
of intersection $[1:0:0:0]$ and $[t^3:t^2s:ts^2:s^3]$
unless $a=[0:1]$. For $a=[0:1]$ we have $\mathrm{Jac}(Z_{[0:1]})=I_X+I_{W_{[0:1]}}=(x_{1}^2,x_2,x_3)$. So locally at $[1:0:0:0]$ the two curves $X$ and $W_{[0:1]}$ are tangent. As expected by Proposition \ref{jacobian conservation} \rm{(i)} for each $a$ we have $I(X,W_a)=2$.
\end{ex}

\vspace{.2cm}
\begin{center}
{\textbf{Proof of Corollary \ref{genus-degree}}}
\end{center}
\vspace{.2cm}
 Clearly, \eqref{aci genus} follows directly from \eqref{eq:arithmetic genus} together with Proposition~\ref{jacobian conservation}\,(iv).
 
Let $X_1, \ldots, X_R$ be the irreducible components of $X$. For each $i$ denote by $\overline{X_i}$ the normalization of $X_i$. For each $x \in X$ denote by  $\nu_x:\overline{(X,x)}\to (X,x)$ be the normalization morphism. Recall that $r_x:=|\nu^{-1}(x)|$ is the number of  branches of $X$ at $x$, and $\delta_{x}:=
\dim_\K\nu_*\Or_{\overline{X,x}}/\Or_{X,x}$ is delta invariant of $X$ at $x$.

By \cite[Thm.\ 2, pg.\ 190]{Hir57} for a reduced curve $X$ one has 
$$p_a(X)=\sum_{i=1}^{R}g(\overline{X_i})+\sum_{x \in X_{\mathrm{sing}}}\delta_{x}-(R-1).$$

We have $\delta_x \geq r_x-1$ by \cite[Prp.\ 4, pg.\ 180]{Hir57}. By \cite[Prp.\ 5.2 \rm{(i)}]{BGR} we have $\sum_{x \in X_{\mathrm{sing}}}(r_x-1) \geq R-1$ because $X$ is connected. Thus $p_a(X) \geq 0$ and so (\ref{aci genus}) gives
$$\deg(X) \geq \frac{\Pi_{i=1}^{n}d_i-2}{\sum_{i=1}^{n}d_i-n-1}.$$
Because $d_i \geq 2$ ($X$ is nondegenerate), the enumerator and the denominator of the last fraction are positive. Pick an index $j \in \{1, \ldots, n \}$. Fix the degrees $d_i$ with $i \neq j$. Set $A_j:=\Pi_{i \neq j}d_i$ and $B_j:=\sum_{i \neq j}d_i$. We want to show that the function $q(d_j):=\frac{d_jA_j-2}{d_j+B_j-n-1}$ is monotonic in $d_j$:
$$q(d_j+1)=\frac{d_jA_j-2+A_j}{d_j+B_j-n-1+1} \geq \frac{d_jA_j-2}{d_j+B_j-n-1}=q(d_j)$$
which after cross-multiplication is equivalent to showing that 
$$ A_jB_j+2 \geq A_j(n+1).$$
But $B_j=\sum_{i \neq j}d_i \geq 2(n-1) \geq n+1$ because $d_i \geq 2$ and $n \geq 3$. Thus the function $Q(d_1, \ldots, d_n):=\frac{\Pi_{i=1}^{n}d_i-2}{\sum_{i=1}^{n}d_i-n-1}$ is monotonic in each variable, and so it is bounded from below by $Q(d_n, \ldots, d_n)$ proving (\ref{degree ineq}). \hfill\qed

Next we show how to compute $I(X,W)$ when $X$ has locally
smoothable singularities or when $X$ is . We say that $X$ is {\it locally smoothable} at  $x$ if there exists an affine neighborhood $(X,x) \subset \mathbb{A}_{\K}^n$ of $x$ in $X$ and a flat morphism $(\mathcal{X},x) \rightarrow T$  with $T$ an affine irreducible smooth curve, such that $(\mathcal{X},x) \subset \mathbb{A}_{\K}^n \times T$, $\mathcal{X}_{t_0}=(X,x)$ for a closed point $t_0 \in T$ and $\mathcal{X}_{t}$ is smooth for $t \neq t_0$. 

Note that $(\mathcal{X},x) \rightarrow T$ induces a deformation $(\mathcal{Z},x) \rightarrow T$ of an affine neighborhood $(Z,x)$ of $x$ in $Z$. Set $\mathcal{W}:=\overline{\mathcal{Z}\setminus \mathcal{X}}$ and set $\mathcal{S}:=\mathcal{X} \times_{\mathcal{Z}} \mathcal{W}$. Then $\mathcal{S} \rightarrow T$ is quasi-finite. By \cite[\href{https://stacks.math.columbia.edu/tag/02LK}{Tag 02LK}]{St} (``\'Etale localization of quasi-finite morphisms'') there exists an elementary étale neighborhood $(T',t) \rightarrow (T,t_0)$ such that after the corresponding base change 
\begin{equation}\label{etale}
\begin{tikzcd}
(\mathcal{S'},x') \arrow[r, hook] \arrow[dr] &
(\mathcal{X}',x') \arrow[r] \arrow[d]
& (\mathcal{X},x) \arrow[d] \\
& (T',t_{0}') \arrow[r]
&  (T,t_0)\\
\end{tikzcd}
\end{equation}
we obtain a finite morphism $(\mathcal{S}',x') \rightarrow (T',t_{0}')$. Note that $(\mathcal{Z}',x'):=(\mathcal{Z},x) \times_{T} T'$ is a complete intersection in $\mathbb{A}_{\K}^n \times T'$.
Write $X_{\mathrm{sm}}$ for the smooth part of $X$ and
$X_{\mathrm{sing}}$ for the singular locus of $X$. We have
$$I(X,W) = \sum_{x \in X_{\mathrm{sm}} \cap W} I_{x}(X,W)+
\sum_{x \in X_{\mathrm{sing}}\cap W}I_{x}(X,W).$$
Let $h$ be a linear form on $\mathbb{P}_{\K}^n$ such that $\mathbb{V}(h)$ does not contain $Z_{\mathrm{sing}}$. In particular,
$\mathbb{V}(h)$ does not contain
$X\cap W$ and $X_{\mathrm{sing}}$. Set $X_{(h)}:=X \cap D_{+}(h)$. Denote by $A(X_{(h)})$ the affine coordinate ring of
$X_{(h)}\subset \mathbb{A}_{\K}^n$.
Let $g \in I_{X_{\mathrm{sing}}}$ such that $g \not \in I_{X_{\mathrm{sm}}\cap W}$.

\begin{prop}\label{smoothable sing}
Suppose $X \subset \mathbb{P}_{\K}^n$ is a reduced curve with locally smoothable singularities. Consider the étale base change (\ref{etale}).  For $t' \in T'$ with $t' \neq t_{0}'$, identify the Jacobian ideal
$\mathrm{Jac}(\mathcal{Z}_{t}')$ with its image in $\mathcal{O}_{\mathcal{X}_{t}'}$. Then for each $x \in X_{\mathrm{sing}}$ and $t' \neq t_{0}'$ we have

$$I_{x}(X,W)=\sum_{x_{t'}' \in \mathcal{X}_{t'}' \cap \mathcal{W}_{t'}'}\dim_{\K} \mathcal{O}_{\mathcal{X}_{t'}',x_{t'}' }/\mathrm{Jac}(\mathcal{Z}_{t'}').$$
Also,
$$\sum_{x \in X_{\mathrm{sm}} \cap W} I_{x}(X,W)=\dim_{\K}A(X_{(h)})_{g}/\mathrm{Jac}(Z).$$
\end{prop}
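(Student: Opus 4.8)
The plan is to establish the two identities independently, in each case reducing the computation to the smooth-point formula of Corollary \ref{smooth curve}, which asserts that $I_x(X,W) = \dim_\K \mathcal{O}_{X,x}/\mathrm{Jac}(Z,x)$ whenever $x$ is a smooth point of $X$. The two statements differ only in how one arranges to be at such smooth points: the second identity works directly on $X$ after excising its singular locus, while the first one first smooths the singularity and then applies the formula on a general fiber.

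For the second identity I would decompose a finite-length quotient module into its local contributions. By Remark \ref{rke:affine_structure} we have $\mathrm{Jac}(Z) \subseteq I_W$ as ideals on $X$, so the coherent sheaf $\mathcal{O}_X/\mathrm{Jac}(Z)$ has zero-dimensional support contained in $(X\cap W)\cup X_{\mathrm{sing}}$; at a \emph{smooth} point $x\in X\cap W$ its stalk has length $I_x(X,W)$ by Corollary \ref{smooth curve}. Passing to the affine chart $X_{(h)}=X\cap D_+(h)$ discards nothing, since $h$ is chosen so that $\mathbb{V}(h)$ meets neither $X\cap W$ nor $X_{\mathrm{sing}}$. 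The purpose of inverting $g$ is to delete the singular locus: because $g\in I_{X_{\mathrm{sing}}}$ it vanishes along $X_{\mathrm{sing}}$ (hence on $X_{\mathrm{sing}}\cap W$ and on the singular points not meeting $W$), while at each point of $X_{\mathrm{sm}}\cap W$ it is a unit, so those stalks are unaffected. A finite-length module over an Artinian ring is the direct sum of its localizations at the points of its support, whence
$$\dim_\K A(X_{(h)})_g/\mathrm{Jac}(Z) = \sum_{x \in X_{\mathrm{sm}} \cap W} \dim_\K \mathcal{O}_{X,x}/\mathrm{Jac}(Z,x) = \sum_{x \in X_{\mathrm{sm}} \cap W} I_x(X,W),$$
as desired.

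For the first identity I would exploit the smoothing together with the constancy of the complete intersection discrepancy in flat families from \cite{BGR}. Fix $x\in X_{\mathrm{sing}}$ and pass to the elementary étale neighborhood of \eqref{etale}, so that $\mathcal{S}' \to T'$ is finite with fiber over $t_0'$ supported at the single point $x'$. Since an étale morphism is an isomorphism on completions, the intersection number is preserved, $I_{x'}(\mathcal{X}_{t_0'}', \mathcal{W}_{t_0'}') = I_x(X,W)$. Finiteness of $\mathcal{S}'\to T'$ guarantees that every point of $\mathcal{X}_{t'}'\cap\mathcal{W}_{t'}'$ over a nearby $t'$ specializes to $x'$ and nowhere else, so the local conservation of number furnished by \cite{BGR} gives
$$I_x(X,W) = \sum_{x_{t'}' \in \mathcal{X}_{t'}' \cap \mathcal{W}_{t'}'} I_{x_{t'}'}(\mathcal{X}_{t'}', \mathcal{W}_{t'}').$$
For $t'\neq t_0'$ the fiber $\mathcal{X}_{t'}'$ is smooth, so Corollary \ref{smooth curve} rewrites each summand as $\dim_\K \mathcal{O}_{\mathcal{X}_{t'}', x_{t'}'}/\mathrm{Jac}(\mathcal{Z}_{t'}')$, yielding the stated formula.

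The main obstacle is the middle step of the first identity: upgrading the \emph{global} constancy of $I(\mathcal{X}_t,\mathcal{W}_t)$ to a genuinely \emph{local} conservation of number concentrated at the single singular point $x$. This is exactly what the étale localization of the quasi-finite $\mathcal{S}\to T$ is designed to achieve—finiteness of $\mathcal{S}'\to T'$ ensures that no discrepancy escapes to other points of the fiber or drifts away as $t'\to t_0'$—but it must be checked that $\mathcal{S}'\to T'$ has constant fiber length over the smooth curve $T'$ (equivalently, that it is flat), which is precisely where the input of \cite{BGR} is essential. A secondary, but genuine, subtlety is the choice of $g$: one needs it to vanish on all of $X_{\mathrm{sing}}$ yet be a unit at \emph{every} point of $X_{\mathrm{sm}}\cap W$; this is possible because these two finite sets are disjoint, and it is the precise sense in which the condition $g\notin I_{X_{\mathrm{sm}}\cap W}$ should be read (a general such $g$ will do).
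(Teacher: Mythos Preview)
Your proposal is correct and follows essentially the same route as the paper: for the first identity you use the \'etale base change to identify the special fibers, invoke the local conservation result \cite[Theorem 3.1]{BGR} for $I_{x'}(\mathcal{X}_{t_0'}',\mathcal{W}_{t_0'}')=\sum I_{x_{t'}'}(\mathcal{X}_{t'}',\mathcal{W}_{t'}')$, and finish with Corollary~\ref{smooth curve} on the smooth general fiber; for the second identity you unpack what the paper abbreviates as ``follows from Proposition~\ref{jacobian conservation}\,(i)'' by decomposing the finite-length quotient into its local pieces and applying Corollary~\ref{smooth curve} at each point of $X_{\mathrm{sm}}\cap W$. Your observation that the condition on $g$ should really be read as ``$g$ is a unit at \emph{every} point of $X_{\mathrm{sm}}\cap W$'' (rather than merely $g\notin I_{X_{\mathrm{sm}}\cap W}$) is a valid clarification of the statement.
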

\begin{proof} Suppose $x \in X_{\mathrm{sing}}$. Because $\K$ is algebraically closed and $T$ is of finite type over $\K$ we have an equality of residue fields $\K=\kappa(t_{0})=\kappa(t_{0}').$ Thus $\mathcal{Z}_{t_0}=\mathcal{Z}_{t_0}'$, $\mathcal{X}_{t_0}=\mathcal{X}_{t_0}'$ and $\mathcal{W}_{t_0}=\mathcal{W}_{t_0}'$. Therefore, $I_{x}(X,W)=I_{x'}(\mathcal{X}_{t_{0}'},\mathcal{W}_{t_{0}'})$.
We will use \cite[Theorem 3.1]{BGR}. Its proof is entirely algebraic, except from a complex analytic argument used to ensure that $S \rightarrow T$ is finite after shrinking $S$ and $T$. In our setup, this step is replaced by a base change to an étale neighborhood of $(T,t_0)$. By \cite[Theorem 3.1]{BGR}
we obtain $I_{x'}(\mathcal{X}_{t_{0}'},\mathcal{W}_{t_{0}'})=\sum_{x_{t'}' \in \mathcal{X}_{t'}' \cap \mathcal{W}_{t'}'}I_{x_{t'}'}(\mathcal{X}_{t'}',\mathcal{W}_{t'}')$. The first identity is obtained by applying Corollary \ref{smooth curve} to each summand. The second identity follows Proposition \ref{jacobian conservation} \rm{(i)}. 
\end{proof}
Note that when the parametrization of $(X,x)$ is known, we can compute $I_{x}(X,W)$ as 
$$I_{x}(X,W)=e(\Jac(Z,x))-2\delta_x-e(R_X).$$ 

Next, we show how to compute the complete intersection discrepancy directly from the equations of $X$ and $Z$ in case where $X$ is locally an aci curve. To keep the exposition as simple as possible, we will work over $\K=\mathbb{C}$; the general case can be treated via a suitable étale localization as discussed above.
\begin{prop}\label{LGT-aci}
Suppose $(X,0)=\mathbb{V}(f_1, \ldots, f_n) \subset (\mathbb{C}^n,0)$ is a Cohen--Macaulay aci curve that is generically a complete intersection. Fix one of the defining  equations, say $f_n$. Then there exists a complete intersection $(Z,0) \subset (\mathbb{C}^n,0)$ such that generically $Z=X$ and $$I_{X,0}=(I_{Z,0},f_n).$$ Set $W:=\overline{Z\setminus X}$. Choose a representative $Z_\delta= Z \cap  \mathring{\mathbb{B}}(0,\delta)$ of the germ $(Z,0)$, where $\mathring{\mathbb{B}}(0,\delta)\subset \C^n$ is the open ball of radius $\delta$ centered at $0$, such that $X \cap W = \{0\}$ in $Z_{\delta}$. Then, for $\varepsilon \in \mathbb{C}$ with $0<|\varepsilon|\ll\delta$, one has
$$I_{0}(X,W)=\sum_{x \in Z_{\delta} \cap \{f_n+\varepsilon=0\}}\dim_{\mathbb{\mathbb{C}}} \mathcal{O}_{Z,x}/(f_n+\varepsilon).$$
\end{prop}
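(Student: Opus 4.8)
The plan is to realize $I_0(X,W)$ as the intersection multiplicity of a divisor with $W$ and then to invoke the principle of conservation of number, exactly as in the second proof of Proposition~\ref{jacobian conservation}\,(iv) but now localized at $0$.

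First I would produce $Z$ and perform a key reformulation. Applying the affine form of the construction in Section~\ref{construction} to the minimal generating set $f_1,\ldots,f_n$ of $I_{X,0}$, while retaining $f_n$ as the residual generator, yields a complete intersection $(Z,0)=\mathbb{V}(F_1,\ldots,F_{n-1})$ with $Z=X$ generically and $(F_1,\ldots,F_{n-1},f_n)=I_{X,0}$, that is $I_{X,0}=(I_{Z,0},f_n)$. The crucial observation is that in $\mathcal{O}_{Z,0}=\mathcal{O}_{\mathbb{C}^n,0}/I_{Z,0}$ the image of $I_{X,0}$ is the principal ideal $f_n\mathcal{O}_{Z,0}$, so the ideal $\Id_X$ of $X$ in $\mathcal{O}_Z$ equals $f_n\mathcal{O}_Z$ and $X=\mathrm{div}_Z(f_n)$. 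Using $\Id_W=(0:_{\mathcal{O}_Z}\Id_X)$ and $\mathcal{O}_{W,0}=\mathcal{O}_{Z,0}/\Id_{W,0}$, the definition of the complete intersection discrepancy then collapses to
\[
I_0(X,W)=\dim_{\mathbb{C}}\mathcal{O}_{Z,0}/(\Id_X+\Id_W)=\dim_{\mathbb{C}}\mathcal{O}_{W,0}/f_n\mathcal{O}_{W,0},
\]
exhibiting $I_0(X,W)$ as the intersection multiplicity at $0$ of the divisor $\{f_n=0\}$ with $W$.

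Next I would set up a finite flat map on $W$. Since $Z$ is a complete intersection, hence Gorenstein, and $X$ is Cohen--Macaulay, the linked curve $W$ is Cohen--Macaulay. No component of $W$ lies in $X=\{f_n=0\}\cap Z$, so $f_n$ is nonzero at every generic point of $W$; as $\mathcal{O}_{W,0}$ is one-dimensional Cohen--Macaulay it has no embedded primes, whence $f_n$ is a nonzerodivisor on $\mathcal{O}_{W,0}$ with $f_n(0)=0$. Therefore $t\mapsto f_n$ defines a finite morphism $\mathbb{C}\{t\}\to\mathcal{O}_{W,0}$, which is flat by miracle flatness (source Cohen--Macaulay, target regular, relative dimension $0$); thus $\mathcal{O}_{W,0}$ is $\mathbb{C}\{t\}$-free of rank $d:=\dim_{\mathbb{C}}\mathcal{O}_{W,0}/f_n\mathcal{O}_{W,0}=I_0(X,W)$.

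Finally I would run conservation of number and identify the sum. Choosing $W_\delta=W\cap\mathring{\mathbb{B}}(0,\delta)$ so that $\{f_n=0\}\cap W_\delta=\{0\}$ (possible because $X\cap W=\{0\}$ in $Z_\delta$ and $\{f_n=0\}\cap W=X\cap W$ as sets), the restriction $\pi:=f_n\colon W_\delta\to\Delta$ is, after shrinking, proper with finite fibers onto a disk $\Delta\subset\mathbb{C}$. Flatness forces the fiber length to be constant equal to $d$, so for $0<|\varepsilon|\ll\delta$ one has $\sum_{x\in\pi^{-1}(-\varepsilon)}\dim_{\mathbb{C}}\mathcal{O}_{W,x}/(f_n+\varepsilon)=d=I_0(X,W)$. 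To match the sum appearing in the statement, observe that for $\varepsilon\neq 0$ the hypersurface $\{f_n+\varepsilon=0\}$ is disjoint from $X$ (where $f_n\equiv 0$), so $Z_\delta\cap\{f_n+\varepsilon=0\}=W_\delta\cap\{f_n+\varepsilon=0\}=\pi^{-1}(-\varepsilon)$, and each such point lies on no component of $X$, giving $\mathcal{O}_{Z,x}=\mathcal{O}_{W,x}$ locally; hence the two sums coincide and the asserted formula follows. The hard part will be the analytic bookkeeping in this last step, namely choosing $\delta$ and then $\varepsilon$ so that $\pi$ is genuinely proper onto a disk and no intersection multiplicity of $\{f_n+\varepsilon=0\}$ with $W$ escapes $\mathring{\mathbb{B}}(0,\delta)$; once the identity $\Id_X=f_n\mathcal{O}_Z$ is in hand, everything else is formal.
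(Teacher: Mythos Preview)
Your proposal is correct and follows essentially the same route as the paper: realize $I_0(X,W)=\dim_{\mathbb{C}}\mathcal{O}_{W,0}/(f_n)$ via the identity $I_{X,0}=(I_{Z,0},f_n)$, use that $W$ is Cohen--Macaulay by linkage so that $f_n$ is a nonzerodivisor giving a finite flat map $W_\delta\to\Delta$, apply conservation of number, and then identify the fiber over $-\varepsilon$ with $Z_\delta\cap\{f_n+\varepsilon=0\}$ since the latter misses $X$ for $\varepsilon\neq 0$. The only cosmetic differences are that the paper re-derives the construction of $Z$ via an explicit matrix manipulation (your citation of Section~\ref{construction} already suffices, since for $r=n$ that construction gives $(\tilde f_1,\ldots,\tilde f_{n-1},f_n)=I_X$ directly) and phrases flatness and finiteness through a one-parameter family plus Weierstrass preparation rather than miracle flatness.
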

\begin{proof}
In Section \ref{construction}, we showed how to find
general linear combinations
\begin{equation}\label{eq:lin_comb}
g_i=\sum_{j=1}^n a_{i,j}f_j,\quad i=1,\ldots,n-1,\:a_{i,j}\in \mathbb{C},
\end{equation}
such that $Z$, defined by $I_Z=(g_1,\ldots,g_{n-1})$, is a complete intersection curve,
that contains $X$ as a union of some of its irreducible components.
We claim that for  any $i\in\{1,\ldots,n\}$, one can impose an additional genericity condition on the coefficients
$a_{i,j}$ in \eqref{eq:lin_comb}, so that
\[I_X=I_Z+(f_i).\]

Up to reindexing, we can assume that $i=n$ and proceed without loss of generality.
Put the relations in \eqref{eq:lin_comb} in a matrix form. Define the column
vectors $[f]:={}^T(f_1,\ldots,f_n)$ and $[g]:={}^T(g_1,\ldots,g_{n-1})$ and the matrix
$A=(a_{i,j})_{\substack{1\leqslant i\leqslant n-1 \\ 1\leqslant j\leqslant n\phantom{-1}}}$,
so that
\begin{equation}\label{eq:matrix_pres}
[g]=A[f].
\end{equation}
Isolate the leftmost square submatrix of $A$ of size $(n-1)\times(n-1)$
\[A=\Big(\:\boxed{A'}\:\Big|\: C'\Big),\]
where $A'\in\mathrm{Mat}(n-1,\mathbb{C})$ is a square matrix and $C'\in\mathbb{C}^{n-1}$
is a column vector. We add the following genericity condition to the coefficients $a_{i,j}$:
$\det(A')\neq0$. Multiply \eqref{eq:matrix_pres} by $A'^{-1}$ to obtain $A'^{-1}[g]=\widetilde A[f]$,
where
\[\widetilde{A}=\Big(\:\boxed{I_n}\:\Big|\: C\Big),\]
and $C\in \mathbb{C}^{n-1}$. Write $C={}^T(c_1,\ldots,c_{n-1}),\:c_i\in\mathbb{C}$ and
${}^T(h_1,\ldots,h_{n-1})=A'^{-1}[g]$. It is clear that $I_Z=(h_1,\ldots,h_{n-1})$.
Furthermore, $h_i=f_i+c_if_n$ for $i=1,\ldots,n-1$. Thus
$(h_1,\ldots,h_{n-1},f_n)=(f_1,\ldots,f_n)$ proving the claim. For this choice of a complete intersection $Z$ set $W:=\overline{Z\setminus X}$. Fix a representative $Z_\delta := Z \cap \mathring{\mathbb{B}}(0,\delta)$
of the germ \((Z,0)\) such that \(X \cap W = \{0\}\) in \(Z_\delta\). Set
$X_\delta := X \cap \mathring{\mathbb{B}}(0,\delta)$ and $W_\delta := W \cap \mathring{\mathbb{B}}(0,\delta).$

Consider the trivial family \(W_\delta \times (\mathbb{C},0) \to (\mathbb{C},0)\), and let \(t\) be a uniformizing parameter of \(\mathcal{O}_{\mathbb{C},0}\). Set
\[
E_n := \mathbb{V}(f_n + t) \subset \mathbb{C}^n \times \mathbb{C},
\]
and denote by \(E_n(\varepsilon)\) the fiber over \(t = \varepsilon\). Since \(W_\delta \times (\mathbb{C},0)\) is Cohen--Macaulay by \cite[Thm.\ 21.23(b)]{Ei}, it follows that \(E_n \cap (W_\delta \times (\mathbb{C},0))\) is also Cohen--Macaulay. Therefore, by \cite[Thm.\ 18.16(b)]{Ei}, the morphism
$E_n \cap (W_\delta \times (\mathbb{C},0)) \to (\mathbb{C},0)$
is flat. By the Weierstrass Preparation Theorem, this morphism is finite over a sufficiently small disk 
$\mathbb{D} := \mathbb{D}(0,\varepsilon) \subset \mathbb{C}$
with \(0 < |\varepsilon| \ll \delta\).
It follows that
\begin{equation}\label{W-flat}
I_0(X,W)= \dim_{\mathbb{C}} \mathcal{O}_{W,0}/(f_n)= \sum_{x \in W_\delta \cap \{f_n+\varepsilon=0\}} \dim_{\mathbb{C}} \mathcal{O}_{W,x}/(f_n+\varepsilon).
\end{equation}
For \(\varepsilon \neq 0\), we have
\[
E_n(\varepsilon) \cap W_\delta = E_n(\varepsilon) \cap Z_\delta,
\]
since \(E_n(\varepsilon) \cap X_\delta = \emptyset\). Therefore,
\[
I_0(X,W)
=
\sum_{x \in Z_\delta \cap \{f_n+\varepsilon=0\}}
\dim_{\mathbb{C}} \mathcal{O}_{Z,x}/(f_n+\varepsilon)
\]
for \(0 < |\varepsilon| \ll \delta\). This completes the proof.
\end{proof}


\subsection{Transversality}\label{transversality}
Assume that  $\mathrm{char}(\K)=0$. We show below that we can further manipulate the equations of $Z$ from Section \ref{construction} to ensure that $X$ and $W$ are in general position, i.e.\ their points of intersection are ordinary double points in $Z$ when $Z$ is a local complete intersection. We do this using Bertini's and Kleiman's transversality theorems.  

Let $Z$ be the complete intersection constructed in Section \ref{construction}. Choose a new $\ell_{n-1}\in S(\Pj^n)_1$ such that $\mathbb{V}(\ell_{n-1})$ does not
pass through any point $x\in X$ where $\mathrm{rk}(J(Z))<n-1$ and $\ell_{n-1}$
avoids the minimal primes of $(f_1,\ldots,f_{n-1})$. For each $i=1, \ldots, n-1$ consider the linear system on
$\Pj_{\K}^n$ spanned by
\begin{equation}\label{linear system}
f_i,\,\ell_{n-1}^{d_i-d_{i+1}}f_{i+1},\,\ldots\,,\ell_{n-1}^{d_i-d_r}f_r.
\end{equation}
Apply successively Bertini's theorem \cite[Cor. 5]{Kl74} starting from $i=1$
to get
\[F_i=\sum_{j=i}^ra_{i,j}\ell_{n-1}^{d_i-d_j}f_j,\:a_{i,j}\in\K,\]
such that $\tilde{Z}:=\mathbb{V}(F_1,\ldots,F_{n-1})$ is smooth  away from the
intersection of the base loci which is
\[\mathbb{V}(\ell_{n-1},F_1,\ldots,F_{n-1})\cup X.\]

But $\mathbb{V}(\ell_{n-1},F_1,\ldots,F_{n-1})$ is a finite set of points. This means
that $\tilde{W}=\overline{\tilde{Z}\setminus X}$ is reduced because $\tilde{Z}$ is a complete intersection and so $\tilde{Z}$ does not have embedded points. By construction
$J(\tilde{Z})|_{\mathbb{V}(\ell_{n-1})\cap X}=J(Z)|_{\mathbb{V}(\ell_{n-1})\cap X}$ is of maximal rank. 
Thus, $\tilde{Z}$ is also reduced along $X$.
Therefore, $\tilde{Z}$ is reduced.

Up to a linear change of coordinates, we can assume $\ell_{n-1}=x_0$. By the Leibniz rule
we have the following identity for Jacobian matrices $J(\tilde{Z})|_X=A^* J(X)$ where
\[A^*=\begin{pmatrix}
 a_{1,1} & a_{1,2}x_0^{d_1-d_2} & a_{1,2}x_0^{d_1-d_3} & \cdots & \cdots & \cdots &
    a_{1,r}x_0^{d_1-d_r} \\
0 & a_{2,2} & a_{2,3}x_0^{d_2-d_3} & \cdots & \cdots & \cdots &
    a_{2,r}x_0^{d_2-d_r} \\
\vdots & \vdots & \ddots & & & \vdots \\
0 & 0 & \cdots & a_{n-1,n-1} & a_{n-1,n}x_0^{d_{n-1}-d_n} & \cdots &
    a_{n-1,r}x_0^{d_{n-1}-d_r}
\end{pmatrix}.\]

By our choice of $x_0$, the locus where $X$ and $\tilde{W}$ intersect is contained in the
principal open $U=D_+(x_0)$. Set $A:=A^*|_{x_0=1}$. 

\begin{prop}\label{jacobian transversality}
Assume $X$ is a reduced local complete intersection curve. 
If $A$ is general, then $X \cap \tilde{W}$ are ordinary double points of $\tilde{Z}$ and 
$$I(X,\tilde{W})= \#(\tilde{Z}_{\mathrm{sing}} \cap X_{\mathrm{sm}}).$$
\end{prop}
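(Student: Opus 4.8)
The plan is to deduce everything from two local facts about a general $A$: that $X\cap\tilde{W}$ avoids $X_{\mathrm{sing}}$, and that along $X_{\mathrm{sm}}$ the singular scheme of $\tilde{Z}$ is reduced. I would organize the argument so that the global identity reduces to a transversality statement governed by the relation $J(\tilde{Z})|_X=A\,J(X)$, which I then settle with Kleiman's theorem in characteristic zero.

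First I would dispose of the count, granting the local structure. Recall from the construction that $X\cap\tilde{W}\subset U=D_+(x_0)$ and that the Bertini step makes $\tilde{Z}$ smooth on $U$ away from $X$, so $\tilde{Z}_{\mathrm{sing}}\cap U\subseteq X$. For $x\in X_{\mathrm{sm}}$ the curve $\tilde{Z}$ is singular at $x$ exactly when it carries a branch through $x$ not lying on $X$, i.e.\ exactly when $x\in\tilde{W}$; hence $\tilde{Z}_{\mathrm{sing}}\cap X_{\mathrm{sm}}=X_{\mathrm{sm}}\cap\tilde{W}$. Once I know every point of $X\cap\tilde{W}$ lies in $X_{\mathrm{sm}}$ and is an ordinary double point, then $X$ and $\tilde{W}$ are smooth transverse branches there, so $I_x(X,\tilde{W})=1$, and summing gives $I(X,\tilde{W})=\#(X\cap\tilde{W})=\#(\tilde{Z}_{\mathrm{sing}}\cap X_{\mathrm{sm}})$.

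The heart of the matter is local. For the first part, to show $X\cap\tilde{W}\subset X_{\mathrm{sm}}$: since $X$ is a local complete intersection, at each of the finitely many $p\in X_{\mathrm{sing}}$ the ideal $\mathcal{I}_{X,p}$ is minimally generated by $n-1$ elements, so (the $\ell_{n-1}$-powers being units at $p\in U$) a general choice of the combinations $F_i$ already generates $\mathcal{I}_{X,p}$ by Nakayama; thus $\tilde{Z}=X$ near $p$ and $p\notin\tilde{W}$. This is the genericity mechanism of Section~\ref{construction} imposed at the finite set $X_{\mathrm{sing}}$. For the second part I would work on $X_{\mathrm{sm}}$ using $J(\tilde{Z})|_X=A\,J(X)$. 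At $x\in X_{\mathrm{sm}}$ the matrix $J(X)(x)$ has rank $n-1$ with image an $(n-1)$-plane $\mathcal{V}_x\subset\K^{r}$, and $\tilde{Z}$ is singular at $x$ iff $\mathcal{V}_x\cap\ker A\neq 0$. Thus $\tilde{Z}_{\mathrm{sing}}\cap X_{\mathrm{sm}}$ is the pullback, under the Gauss-type morphism $\psi\colon X_{\mathrm{sm}}\to\mathrm{Grass}(n-1,r)$, $x\mapsto\mathcal{V}_x$, of the special Schubert divisor $\Sigma_{\ker A}=\{V:V\cap\ker A\neq 0\}$. Now I would invoke Kleiman's transversality theorem: $\mathrm{GL}_r$ acts transitively on $\mathrm{Grass}(n-1,r)$ and sends $\Sigma_{L_0}$ to $\Sigma_{gL_0}$, so as $A$ (hence $L=\ker A$) varies generally, $\psi$ becomes transverse to the smooth locus of $\Sigma_L$ and, for dimension reasons, $\psi(X_{\mathrm{sm}})$ misses the deeper strata $\{\dim(V\cap L)\ge 2\}$; hence $\psi^{-1}(\Sigma_{\ker A})$ is a reduced finite subscheme, and at each of its points $\dim_\K T_x\tilde{Z}=1+\dim(\mathcal{V}_x\cap\ker A)=2$. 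A Cauchy–Binet factorization as in Lemma~\ref{lm:vary_CIA} shows $\det((AJ(X))_{\bullet,T})=\det(A|_{\mathcal{V}_x})\cdot\mu_T$ where the $\mu_T$ generate the unit ideal on $X_{\mathrm{sm}}$ by the Jacobian criterion; so $\Jac(\tilde{Z})|_{X_{\mathrm{sm}}}$ is locally generated by $\det(A|_{\mathcal{V}_x})$, i.e.\ equals the Schubert pullback scheme. Reducedness then gives $\dim_\K\mathcal{O}_{X,x}/\Jac(\tilde{Z},x)=1$, so $I_x(X,\tilde{W})=1$ by Corollary~\ref{smooth curve}. Since $\tilde{Z}$ has embedding dimension $2$ at $x$ and splits there as the union of the smooth branch $X$ and $\tilde{W}$ with total intersection number $1$, both branches are smooth with distinct tangents, so $x$ is an ordinary double point.

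The step I expect to be the main obstacle is the transversality–reducedness of $\psi^{-1}(\Sigma_{\ker A})$ together with the identification of its scheme structure with that of $\Jac(\tilde{Z})|_{X_{\mathrm{sm}}}$: one must verify that a general $A$ yields a genuinely general translate $\Sigma_{\ker A}$ (dominance of $A\mapsto\ker A$ onto the Grassmannian of $(r-n+1)$-planes), that $\psi$ meets only the smooth locus of the Schubert divisor, and that $\mathrm{char}(\K)=0$ enters exactly where Bertini and Kleiman require separability. Everything else — the additivity of $\chi$-free count, the Nakayama argument at $X_{\mathrm{sing}}$, and the passage from $I_x=1$ to node-ness — is formal.
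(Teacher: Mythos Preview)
Your proof is correct and follows the same overall strategy as the paper—first separate off $X_{\mathrm{sing}}$, then apply Kleiman's transversality on $X_{\mathrm{sm}}$ to force the singular scheme of $\tilde{Z}$ along $X$ to be reduced, and finally read off the node structure—but the two arguments differ in their packaging at each stage. At $X_{\mathrm{sing}}$ you invoke Nakayama directly (the lci hypothesis makes $\mathcal{I}_{X,p}/\mathfrak m_p\mathcal{I}_{X,p}$ have dimension $n-1$, so general triangular combinations already span it, by the mechanism of Section~\ref{construction}); the paper instead uses Corollary~\ref{lci} together with \cite[Proposition 2.3]{BGR} and Rees' theorem to match the integral closures of $\Jac(\tilde Z,x)$ and $\Jac(X,x)$, which is less elementary. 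On $X_{\mathrm{sm}}$ your Gauss-map/Schubert formulation is a geometric rephrasing of the paper's module-theoretic computation: the paper passes to the transpose module $M^*\subset\Or_X^n$, shows it is locally free of rank $n-1$ generated by $r$ global sections, identifies $\Jac(\tilde Z)|_X$ with $\Fitt_0(M^*/M')$ via a split exact sequence, and then applies \cite[Rmk.~6 and 7]{Kl74} to degeneracy loci—this is the same content as pulling back $\Sigma_{\ker A}$ along your map $\psi$. Your final step (embedding dimension $2$ plus $I_x=1$ forces a node) is equivalent to the paper's Smith-normal-form reduction to a plane curve in a smooth surface $C=\mathbb V(f_2',\ldots,f_{n-1}')$, just argued more briskly. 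The dominance of $A\mapsto\ker A$ that you flag is genuinely needed in both arguments and holds because the upper-triangular full-rank $(n-1)\times r$ matrices hit the big Schubert cell of $\mathrm{Grass}(r-n+1,r)$; the paper glosses over the same point when citing Kleiman.
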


\begin{proof}
The set of matrices $A$ is isomorphic to
$\A_{\K}^{(n-1)(2r-n+2)/2}$. We will show 
that there exists a non-empty Zariski open subset of $\A_{\K}^{(n-1)(2r-n+2)/2}$, such that for any matrix with entries from that open subset, $X$ and $\tilde{W}$ intersect locally
transversally, i.e.\ the points of intersection are ordinary double points of $\tilde{Z}$. As the intersection takes place in $U$, we can do this by considering the equations for $X\cap U$, obtained
by dehomogenizing the equations of $X$ along $x_0$, i.e.\ by setting $x_0=1$ in the
equations of $X$. Note that the Jacobian matrix of $X \cap U$ is the restriction of $J(X)$ to $X\cap U$ with its first column deleted. Without loss of generality we will assume that $X \subset \mathbb{A}_{\K}^n$ and $\tilde{Z} \subset \mathbb{A}_{\K}^n$ are affine. The equations for $\tilde{Z}$ in $\mathbb{A}_{\K}^n$ are given by $A(f_{1}', \ldots, f_{r}')^{T}=0$ where $f_{i}'=f_{i}|_{x_0=1}$.

Let $x$ be a singular point in $X$. Because $X$ is a local complete intersection at $x$, by Corollary \ref{lci} $(\tilde{Z},x)=(X,x)$ if and only if $e(\mathrm{Jac}(\tilde{Z},x))=e(\mathrm{Jac}(X,x))$ which is equivalent to $\mathrm{Jac}(\tilde{Z},x)$ and $\mathrm{Jac}(X,x)$ having the same integral closure in $\mathcal{O}_{X,x}$ by Rees' result \cite[Theorem 11.3.1]{SH}. But for a general $A$ the two ideals $\mathrm{Jac}(\tilde{Z},x)$ and $\mathrm{Jac}(X,x)$ have the same integral closure (see \cite[Proposition 2.3]{BGR}). Thus for a general $A$ we can assume that $(\tilde{Z},x)=(X,x)$ for each singular point $x$. So $I(X,W)=I(X_{\mathrm{sm}},W)$. Therefore, without loss of generality we may assume that $X$ is smooth. 

Denote by $M\subset\Or_{X}^r$ the $\Or_{X}$-module generated by the columns of
$J(X)$. Set $e:=n-1$. Denote by $[M]$ the presentation matrix of $\Or_{X}/M$.
Let $M_e\subset\Or_{X}^e$, the $\Or_{X}$-module generated by the columns of
$[M_e]=A[M]$. Denote by $S$ the subscheme of $X$ defined by
$\Fitt_0(\Or_{X}^e/M_e)$. We claim that $S$ is reduced for general $A$. We have
$[M]\in\mathrm{Mat}(r\times n,\Or_{X})$. Denote by $M^*$ the $\Or_{X}$-submodule
of $\Or_{X}^n$ generated by the columns of $[M]^{\mathrm{tr}}$. We have
$$\Fitt_{r-e}(\Or_{X}^r/M)=\Fitt_{n-e}(\Or_{X}^n/M^*)=\Fitt_{1}(\Or_{X}^n/M^*).$$

Denote by $M'$ the
$\Or_{X}$-submodule of $M^*$ generated by the columns of $[M^*](A)^{\mathrm{tr}}$.
Because $[M^*](A)^{\mathrm{tr}}=(A[M])^{\mathrm{tr}}$ we have
\[\Fitt_0(\Or_{X}^e/M_e)=\Fitt_{1}(\Or_{X}^n/M').\]
Let $x\in S$. Consider the following
short exact sequence
\begin{equation}\label{eq:SES}
\begin{tikzcd}
0 \ar[r]& M^*_x/M'_x \ar[r]& \Or_{X,x}^n/M'_x \ar[r]& \Or_{X,x}^n/M^*_x \ar[r]&0.
\end{tikzcd}
\end{equation}
Because $M$ is the Jacobian module of $X$ and $X$ is smooth, we have $\Or_{X,x}^r=
M_x\oplus F$, where $F$ is an $\Or_{X,x}$-free module of rank $r-e$. Thus
$\Or_{X,x}^n/M^*_x$ is free and so \eqref{eq:SES} is a split exact sequence. By
\cite[\href{http://stacks.math.columbia.edu/tag/07ZA}{Tag 07ZA}]{St} we have
\[\Fitt_{1}(\Or_{X,x}^n/M'_x)=
\sum_{i+j=1}\Fitt_i(M^*_x/M'_x)\Fitt_j(\Or_{X,x}^n/M^*_x)=
\Fitt_0(M^*_x/M'_x),\]
because $\Or_{X,x}^n/M^*_x$ is free of rank $1$, so the only nonzero term in the
sum above is the one corresponding to $(i,j)=(0,1)$. Thus locally at $x$, $S$ is
defined in $\Or_{X,x}$ by $\Fitt_0(\Or_{X,x}^e/(M_e)_x)=
\Fitt_{1}(\Or_{X,x}^n/M'_x)=\Fitt_0(M^*_x/M'_x)$. Note that $M^*$ is a locally free sheaf of rank $e$ on $X$ generated by $r$ global
sections. By Kleiman's transversality theorem \cite[Rmk. 6 and 7]{Kl74} the subscheme
defined by $\Fitt_0(M^*/M')$ is either empty or reduced of dimension $0$. Therefore, if $S$ is nonempty, then $S$ is reduced. Suppose this is the case.

For such a choice of $A$, giving a reduced $S$, note that $[M_e]=J(\tilde{Z})|_{X}$. Then $M_e$ is the Jacobian module $J(\tilde{Z})$ of $\tilde{Z}$ restricted to
$X$. Let $x\in S$. We have $\Fitt_0(\Or_{X}^e/M_e)\otimes\Or_{X,x}=(u)$, 
where $u$ is a uniformizing parameter for $\mathcal{O}_{X,x}$. Without loss of generality  assume that $(\tilde{Z},x)=\mathbb{V}(f_1', \ldots, f_{n-1}')$ where
$(X,x)=\mathbb{V}(f_1', \ldots, f_r')$. Because $\mathcal{O}_{X,x}$ is a PID, then locally at $x$, the matrix $[J(\tilde{Z},x)]$ has a Smith normal form with invariant factors $u^{a_1}, \ldots, u^{a_v}$. The matrix $[J(\tilde{Z},x)]$ is of size $(n-1)\times n$. The ideal of $(n-1)\times (n-1)$ minors of $[J(\tilde{Z},x)]$ is the same as that of its Smith normal form. Because $\Fitt_0(\Or_{X,x}^e/J(\tilde{Z},x))=(u)$ we get $v=n-1$ and all the exponents $a_i$ vanish but one, which is equal to $1$. Thus the rank of $[J(\tilde{Z},x)]$ is $n-2$. Without loss of generality we can assume that Jacobian matrix of the variety $\mathbb{V}(f_2', \ldots, f_{n-1}')$ has the maximal possible rank $n-2$ at $x$. So $C:=\mathbb{V}(f_2', \ldots, f_{n-1}')$ is a smooth surface at $x$. After passing to the completion of $(C,x)$ we can assume that $(\tilde{Z},x)$ is contained in $(\mathbb{A}_{\K}^{2},x)$.
Because $X$ is smooth at $x$ we can assume that $(u,t)$ are local coordinates on
$(\mathbb{A}_{\K}^{2},x)$ with $(X,x)=\mathbb{V}(t)$.
Then $(\tilde{Z},x)=\mathbb{V}(tg(u,t))$ where $\mathbb{V}(g)=(\tilde{W},x)$. But then
\[\mathrm{Fitt}_{0}(\mathcal{O}_{X,x}^{e}/J(\tilde{Z},x))=
(\partial{tg(u,t)}/ \partial{u},\partial{tg(u,t)}/ \partial{t})_{|t=0}=(g(u,0)).\]
But $\Fitt_0(\Or_{X,x}^e/J(\tilde{Z},x))=(u)$, so
$\mathrm{ord}_{u}(g(u,0))=1$. This implies that
$\deg(\mathrm{in}(g(u,t)))=1$ and so
$(\tilde{W},x)$ is smooth at $x$. Also $\dim_\mathbb{\K}\mathcal{O}_{\mathbb{A}^2,x}/
(t,g(t,u))=1$. So $(X,x)$ and $(\tilde{W},x)$ intersect in $(\mathbb{A}_{\K}^{2},x)$ transversally at $x$. In particular, $I_x(X,\tilde{W})=1$. Thus, for a general $A$, each point in $X\cap \tilde{W}$ is an ordinary double point of $\tilde{Z}$ which contributes exactly $1$ to $I(X,\tilde{W})$. The number of such points is the number of singular points of $\tilde{Z}$ that lie in $X$.
\end{proof}
\begin{rke}
By considering larger linear systems on
$\Pj_{\K}^n$ spanned by
\[f_i, h_{i,i+1}f_{i+1},\ldots, h_{i,r}f_r\]
to construct $\tilde{Z}$, where $h_{i,j}=\sum_{|k|=d_i-d_{j}}a_{ijk}{\bf x}^k$ with $a_{ijk} \in \K$, Proposition \ref{jacobian transversality} can be derived from \cite[Theorem 4.4 \rm{(f)}]{CU} without assuming $\mathrm{char}(\K)=0$.
\end{rke}

\begin{ex} Assume $\mathrm{char}(\K)=0$. Let $X \subset \Pj_\K^n$ be the rational normal curve. It's a projective curve of degree $n$, genus zero, and it is cut out by the quadric equations $f_{i,j}(x_0, \ldots, x_n)=x_{i}x_{j}-x_{i+1}x_{j-1}$. Because the degrees of the defining equations of $X$ are the same, the constructions of $Z$ and $\tilde{Z}$ are the same: it's enough to consider $n-1$ general $\K$-linear combinations of the $f_{i,j}$. By (\ref{eq:arithmetic genus}) we have $I(X,W)=n(n-3)+2$ (we will show elsewhere that in fact (\ref{eq:cid-jacobian}) can be used to compute combinatorially $I(X,W)$ for smooth monomial curves). Because $\mathrm{deg}(Z)=2^{n-1}$, we have $\mathrm{deg}(W)=2^{n-1}-n$. 
Because the defining equations of $X$ are of the same degree, the base locus of the linear systems (\ref{linear system}) is $X$. Thus $W$ is smooth away from $X$. By the proof of Proposition \ref{jacobian transversality} $W$ is smooth at $X \cap W$. Thus $W$ is smooth for general $Z$.
Applying (\ref{eq:arithmetic genus}) again, we get $g_{W}=(n-3)(2^{n-2}-n)$. Therefore,
\[W=\begin{cases}
     \Pj_\K^1 & \text{if } n=3 \\
    \text{the rational normal curve in}\ \Pj_\K^4 & \text{if}\
n=4.
\end{cases}\]
For $n \geq 5$, we have $g_{W} \geq 6$.
\end{ex}

\subsection{Some remarks about computability}\label{computability}
The construction of $Z$ can be implemented with a computer algebra package. The
choices of general hyperplanes in a suitable projective space correspond to choosing
$\ell_i,i=1,\ldots,n-1$, general members of certain linear systems. We also choose a
general hyperplane at infinity $\mathbb{V}(h)$ that does not contain $X\cap W$. To
verify that such general choices give us the $Z$ and $\tilde{Z}$ with the prescribed
properties we need to perform four tests.
\begin{enumerate}
\item First, we need to check that $(\Id_{Z},h)$ defines a zero dimensional scheme
for some hyperplane $\mathbb{V}(h)$. This will ensure that $Z$ is a complete
intersection.
\item Second, we need to check that $(\Jac(Z),I_X,\ell_{n-1})=(1)$ where
$\Jac(Z)$ is the Jacobian ideal of $Z$. Passing this test will imply that $Z$ is
reduced along $X$ and that $\ell_{n-1}$ satisfies the property that
$\mathbb{V}(\ell_{n-1})$ does not contain points on $X$ where the rank of the Jacobian matrix $J(Z)$
drops.
\item Third, to check that $\tilde{Z}$ is a reduced complete intersection, it suffices to
verify that $\mathbb{V}(\Jac(\tilde{Z}),I(\tilde{Z}))$ is a zero-dimensional scheme.
\item Finally, we need to verify that $\mathbb{V}(h)\cap X\cap W=\emptyset$. This
can be carried out by computing the $\K$-vector space dimension of the homogeneous
ideal quotient $S(\Pj^n_\K)/(\Jac(Z),I_X,h)$. This dimension is finite if and only
if the ideal is $\m$-primary, where $\m=(x_0,\ldots,x_n)$ is the irrelevant ideal.
Thus $\mathbb{V}(h)\cap X\cap W=\emptyset$ if and only if
$\dim_\K S(\Pj^n_\K)/(\Jac(Z),I_X,h)<\infty$.
\end{enumerate}
The verification that an ideal defines a zero-dimensional scheme or an empty set can
be carried in the standard affine charts using for example the $\verb|\vdim|$
operation in \textsc{Singular} \cite{DGPS}. It is proved that algorithms for
computing the number of solutions of zero-dimensional systems of equations have good complexity, i.e.\ they are polynomial in $d^{n+1}$ where $d$  is the maximum degree of input polynomials \cite{HL}. Once these tests are passed, we can compute $I(X,W)$ when $X$ is smooth
from the above data directly with another $\verb|\vdim|$ computation
\[I(X,W)=\dim_\K S(\Pj^n_\K)/(\Jac(Z),I_X,h-1).\]
Our approach does not require the heavy computations of primary decompositions, which are required to determine the ideal of $W$. The above computations generalize when $X$ has locally smoothable singularities provided that the equations of the smoothings of the singularities of $X$ are known.

\appendix

\section*{Appendix}
\begin{center}
    {\footnotesize\textsc{Marc Chardin}\footnote{\scshape
    Institut Mathématique de Jussieu, CNRS and Sorbonne Université, 4 place Jussieu, E-mail:  marc.chardin@imj-prg.fr
    }}
\end{center}
\renewcommand{\thesubsection}{\arabic{subsection}}
\setcounter{equation}{0}

\subsection{A formula for the genus of geometrically linked curves}

Below we prove (\ref{eq:arithmetic genus}) algebraically with no assumptions on the field $\K$. Let $Z$ be a curve in $\Pj^n_\K$ defined by $n-1$ homogeneous equations of degrees
$d_1,\ldots ,d_{n-1}$. Then the quotient ring $R/I_Z$ is Gorenstein of dimension two; its Hilbert series is the
power series expansion of the fraction
\begin{align*}
S_Z (t)
&:=\frac{\prod_{i=1}^{n-1}(1-t^{d_i})}{(1-t)^{n+1}}
=\frac{\pi}{(1-t)^2}-\frac{\sigma \pi}{2(1-t)}+Q(t)
\end{align*}
with $\pi:=d_1\cdots d_{n-1}$, $\sigma :=\sum_{i=1}^{n-1}(d_i-1)$ and $Q$ a polynomial of
degree $\sigma-2$. Writing the Hilbert polynomial of a closed subscheme $Y$ of $\Pj^n_\K$  of dimension one as
\[
P_Y (\mu )=d_Y (\mu +1)-e_Y,
\]
it shows that $d_Z=\pi$ and $e_Z=\frac{1}{2}\sigma\pi$. Recall that the arithmetic genus of $Y$ is $$p_a (Y):=1-P_Y (0)=1+e_Y-d_Y.$$



Assume $I_Z =I\cap J$ with $X:=\proj (R/I)$ and $W:=\proj (R/J)$ of pure dimension $1$
and $I+J$ of codimension at least $n$. 
Equivalently, $I=\bigcap_{i\in E}\q_{i}$ and $J=\bigcap_{i\in F}\q_{i}$ with
$I_Z=\bigcap_{i\in E\cup F}\q_i$ the minimal primary decomposition of $I_Z$ and
$E\cap F=\emptyset$. In terms of liaison, we say that $X$ and $Y$ are {\it geometrically linked} by $Z$.
Then the exact sequence
\[
0\rightarrow R/(I\cap J)\rightarrow R/I\oplus R/J\rightarrow R/(I+J)\rightarrow 0
\]
implies that $d_X+d_W=d_Z=\pi$ and $-e_X-e_W=-e_Z+I(X,W)$, with $X\cap W:=\proj (R/(I+J))$,
since it shows that $P_X+P_W=P_Z+P_{X\cap W}$ and $P_{X\cap W}$ is a constant equal to $I(X,W)$.
In this setting, $W\cap X$ is non-empty and locally Gorenstein, and $R/(I+J)$ is Gorenstein
if further $R/I$ is Cohen-Macaulay (equivalently if $R/J$ is so), by
\cite[Proposition 1.3 and Remarque 1.4]{PS}.\medskip

Now, $ I/I_Z=(I_Z:J)/I_Z\simeq\Hom_R (R/J,R/I_Z)
$, where the isomorphism sends an element $x$ to $(1\mapsto x)$
and $R/I_Z$ is Gorenstein, with canonical module
$$\omega_{R/I_Z}=\Ext^{n-1}_R (R/I_Z, R[-n-1])\simeq R/I_Z (\sigma -2),$$
as it follows by dualizing the Koszul complex on the generators of $I_Z$ into $R[-n-1]$,
since $\sigma-2 = d_1+\cdots+d_{n-1}-n-1$. 

Therefore, $ I/I_Z\simeq \Hom_R (R/J,\omega_{R/I_Z}(2-\sigma ))\simeq \omega_{R/J}(-\sigma +2)$ and the exact sequence $0\rightarrow I/I_Z\rightarrow R/I_Z\rightarrow R/I\rightarrow 0$ then shows that:

\begin{equation}\label{HP}
    P_Z(\mu )=P_X (\mu )+P_{\omega_W}(\mu -\sigma +2).
\end{equation}

To compute $P_{\omega_W}$, the simplest way is to use Serre duality: for every $\mu\in\Z$, 
\begin{align*}
P_W(\mu)
&=h^0(W,\Or_W (\mu ))-h^1(W,\Or_W(\mu ))\\
&=h^1(W,\omega_W (-\mu ))-h^0(W,\omega_W (-\mu ))\\
&=-P_{\omega_W}(-\mu ).\\
\end{align*}
Alternatively, if $F_\bullet$ is a finite graded resolution of $R/J$, then
$D_\bullet :=\Hom_R (F_\bullet ,R[-n-1])$ has a unique homology module
in high enough degrees, $\omega_{R/J}\simeq \Ext^{n-1}_R(R/J,R[-n-1])$.
Indeed, since $R/J$ is Cohen-Macaulay off the graded maximal ideal of $R$,
$\Ext^{i}_R(R/J,R)$  has finite length for $i\not=n-1$, hence is concentrated
in finitely many degrees. 

The resolution provides the Hilbert polynomial of $\omega_{R/J}$ in terms of the Hilbert polynomial of $R/J$,
since the alternated sum of Hilbert series $\sum_i (-1)^i S_{F_i}$ is $S_{R/J}$,
while $\sum_i (-1)^i S_{D_i}$ equals the alternated sum of the Hilbert series of the
graded modules $\Ext^{i}_R(R/J,R[-n-1])$, and can be computed from it. 

More precisely, writing $S_{R/J} (t)=\sum_i (-1)^i \frac{{P_i}(t)}{(1-t)^{n+1}}$
with $P_i (t)=\sum_j t^{b_{i,j}}\in \Z [t^{-1},t ]$ deduced from the expression
$F_i=\oplus_j R[-b_{i,j}]$, it follows that
\begin{align*}
\sum_i(-1)^iS_{D_i}(t)
&=t^{n+1}\sum_i (-1)^i \frac{{P_i}(t^{-1})}{(1-t)^{n+1}}\\
&=(-1)^{n+1}\sum_i (-1)^i \frac{{P_i}(t^{-1})}{(1-t^{-1})^{n+1}}\\
&=(-1)^{n+1}S_{R/J}(t^{-1}).
\end{align*}

Now, if $P(\mu)$ is the Hilbert polynomial associated to the series $S(t)$,
then the Hilbert polynomial associated to $S(t^{-1})$, rewritten as a series in $t$,
is $-P(-\mu )$.  This is \cite[Remark 1.8]{CEU}; it corresponds here to the identities 
\[
S_{R/J}(t^{-1})
=\frac{d_W}{(1-t^{-1})^2} - \frac{e_W}{(1-t^{-1})} + Q(t^{-1})
=\frac{d_W}{(1-t)^2} - \frac{2d_W-e_W}{(1-t)} + d_W - e_W + Q(t^{-1})
\]
with $Q$ a polynomial and $d_W(\mu +1)-(2d_W-e_W)=-[(-\mu +1)d_W-e_W]=-P_W (-\mu )$.
It follows that 
$P_{\omega_{R/J}}(\mu )=(-1)^{(n+1)-(n-1)}\times -P_{W} (-\mu )= -P_{W} (-\mu )$. \\



With this relation between $P_W$ and $P_{\omega_W}$, (\ref{HP}) yields:

\[
\begin{array}{rl}
P_X(\mu )&=P_Z(\mu )-P_{\omega_W}(\mu -\sigma +2)\\
&=\pi (\mu +1)-e_Z+[(-(\mu -\sigma +2)+1)d_W-e_W]\\
&=(\pi -d_W)(\mu +1)-(e_Z+e_W-\sigma d_W)\\
&=d_X (\mu+1)-(2e_Z-e_X-I(X,W)-\sigma (\pi -d_X))\\
&\phantom{d_X (\mu+1)d_X (\mu+1)d_X (\mu+1)}(e_W=e_Z -e_X-I(X,W),\ d_W=\pi -d_X )\\
&=d_X (\mu +1)-(\sigma d_X-e_X-I(X,W))\quad\text{since}\quad 2e_Z=\sigma \pi.\\
\end{array}
\]

\medskip

This gives us $e_X=\sigma d_X-e_X-I(X,W)$; hence $2e_X=\sigma d_X- I(X,W)$ and

\[
\begin{array}{rl}
p_a (X)&=1-P_X (0)\\
&=1-d_X+e_X\\
&=1-d_X+\frac{1}{2}(\sigma d_X-I(X,W))\\
&=1+\frac{1}{2}((\sigma -2) d_X-I(X,W)).\\
\end{array}
\]

\subsection{On the singularities of general links.}\medskip

Singularities of generic or general links have been studied in several situations, notably in the local case, see \cite[Proposition 2.9]{HU}. 

Due to our interest in applications of estimates for Castelnuovo--Mumford regularity,
Bernd Ulrich and the author provided versions of these results in a graded setting,
together with some additions on the nature of singularities and an extension to
residual intersections; our main result is \cite[Theorem 4.4]{CU} which implies the following result. 

\begin{thprop}
Let $X$ be a geometrically reduced local complete intersection in $\Pj^n_\K$ with $\K$ an infinite field. Suppose $X$ is of pure dimension $d \leq 3$ and that $X$ has isolated singularities. If $X$ is defined by equations of degrees
$d_1\geq\cdots\geq d_{r}$, there exists a complete intersection $Z=X\cup W$ such that $X$ and $W$ are geometrically linked by $Z$, the defining equations of $Z$ in $\Pj^n_\K$ are of degrees $d_1,\ldots ,d_{n-d}$, and such that $W$
and $X\cap W$ are smooth.
\end{thprop}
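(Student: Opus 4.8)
The plan is to realise $Z$ as a \emph{general} geometric link of $X$ and to extract both smoothness assertions directly from \cite[Theorem 4.4]{CU}, of which this statement is a specialisation. Since $X$ is geometrically reduced and a local complete intersection of pure dimension $d$, it has codimension $c:=n-d$ in $\Pj^n_\K$, so any complete intersection $Z\supseteq X$ of the same dimension is cut out by exactly $c=n-d$ forms; matching a minimal generating set of $I_X$ forces these degrees to be $d_1,\ldots,d_{n-d}$. First I would produce such a $Z$ by the codimension-$(n-d)$ analogue of the prime-avoidance and linear-algebra procedure of Section \ref{construction}, choosing each $F_i$ to be a general homogenised combination $F_i=\sum_{j\geq i}a_{i,j}\ell^{\,d_i-d_j}f_j$ so that $Z=\mathbb{V}(F_1,\ldots,F_{n-d})$ is a complete intersection agreeing with $X$ at the generic point of every component of $X$. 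Setting $W:=\overline{Z\setminus X}$ then displays $X$ and $W$ as geometrically linked by $Z$: they share no component and $X\cap W$ has the expected codimension.

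With $Z$ fixed, the smoothness of $W$ separates into two regimes. Away from $X$ one has $W=Z$ locally, and a Bertini/Kleiman transversality argument \cite[Cor.\ 5, Rmk.\ 6 and 7]{Kl74}---exactly as in Proposition \ref{jacobian transversality}, where the bad locus is cut down to $X$ together with finitely many points---renders the general $W$ smooth outside $X$. The substantive input concerns the behaviour of $W$ and of the linkage locus $X\cap W$ \emph{along} $X\cap W$, and above all over the finitely many singular points of $X$; this is exactly where \cite[Theorem 4.4]{CU} enters. That result, the graded counterpart of the local study of general links in \cite[Proposition 2.9]{HU}, guarantees that for a general choice of the linking forms both $W$ and $X\cap W$ are smooth, under hypotheses that $X$ meets here: geometric reducedness, the local complete intersection property, isolated singularities, and the dimension bound $d\leq 3$.

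The main obstacle, and the reason the argument is not self-contained, is precisely this last step, which is a dimension count on the parameter space of linking forms. One must show that the tuples $(a_{i,j})$ producing a singular $W$ or a singular $X\cap W$ form a proper closed subset, after imposing the finitely many conditions coming from the isolated singular points of $X$ together with the conditions forcing non-transverse residual intersection. The hypothesis $d\leq 3$ is what keeps the expected dimension of this bad locus negative: the linkage locus $X\cap W$ has dimension $d-1\leq 2$ and $X_{\mathrm{sing}}$ is zero-dimensional, so the incidence loci governing tangency of the general member to $X$ along $X\cap W$ have codimension exceeding their dimension, and a general link avoids them. Carrying out this count rigorously in the graded setting over an arbitrary infinite field is the content of \cite[Theorem 4.4]{CU}; once invoked, the desired $Z$ is obtained by taking $(F_1,\ldots,F_{n-d})$ general, which simultaneously yields the construction, the geometric linkage, and the smoothness of $W$ and $X\cap W$.
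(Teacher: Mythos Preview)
Your proposal is correct and matches the paper's approach exactly: the paper gives no independent proof but simply states that the proposition is implied by \cite[Theorem 4.4]{CU}, with \cite[Proposition 2.9]{HU} cited as the local antecedent. Your additional elaboration (the explicit construction of $Z$, the Bertini argument away from $X$, and the heuristic for the role of $d\leq 3$) goes beyond what the paper provides, but the core strategy---defer everything substantive to \cite[Theorem 4.4]{CU}---is identical.
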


In the case of curves, this result implies that $W$ is smooth and locally at points of $X\cap W$,  $X$ and $W$ are smooth with distinct tangent lines, which is likely stated in other sources.

\end{document}